\numberwithin{equation}{section}
\theoremstyle{plain} 
\newtheorem{theorem}{\indent\bf Theorem}[section]
\theoremstyle{definition} 
\newtheorem{remark}[theorem]{\bf Remark}
\newtheorem{thm}{Theorem}[section]
\newtheorem{cor}[thm]{Corollary}
\newtheorem{lem}[thm]{Lemma}
\theoremstyle{definition}
\newtheorem{defn}{Definition}[section]
\theoremstyle{remark}
	\newcommand{\mc}{\mathbb{C}}
	\newcommand{\mr}{\mathbb{R}}
	\newcommand{\ra}{\rightarrow}
	\newcommand{\bi}{\begin{itemize}}
		\newcommand{\ei}{\end{itemize}}
	\newcommand{\mo}{\mathcal{O}}
	\newcommand{\rw}{\rightarrow}
	\newcommand{\rwo}{\mapsto}
    \newcommand{\bing}{\mathop{\cup}}
\begin{document}
		
		\title[curvature strict positivity of direct image bundles ]
		{Curvature formulas and curvature strict positivity of direct image bundles}

		\author[X. Qin]{Xiangsen Qin}
		\address{Xiangsen Qin: \ School of Mathematical Sciences, University of Chinese Academy of Sciences\\ Beijing 100049, P. R. China}
		\email{qinxiangsen19@mails.ucas.ac.cn}

		\begin{abstract}
       In this paper, we consider the curvature strict positivity of direct image bundles (vector bundles) associated to a strictly pseudoconvex family of bounded domains.
       The main result is that the curvature of the direct image bundle associated to a strictly pseudoconvex family of bounded domains is strictly positive in the sense of Nakano
       even if the curvature of the original vector bundle is just Nakano positive. Based on our (I and my coauthors) previous results, this result further demonstrates 
       that strictly pseudoconvex domains and pseudoconvex domains have very different geometric properties. To consider the curvature strict positivity, we will first construct
       a curvature formula for a direct image bundle, then the curvature strict positivity will be a simple consequence of it.  As applications to convex analysis, we get a corresponding version of Pr\'ekopa's Theorem, i.e., we get the curvature strict positivity of a strictly convex family of bounded domains.
       In the last, we give a flatness criterion for the direct image bundles.
		\end{abstract}

		\maketitle
		
		\tableofcontents
		\section{Introduction}
		Let $U\subset \mc_t^n$ and $D\subset \mc_z^m$ be bounded pseudoconvex domains, and let $\varphi$ be a smooth plurisubharmonic function defined on some open neighborhood of 
        the closure of $\Omega:=U\times D$ in $\mc^n\times\mc^m$.
		For $t\in U$, define the Hilbert space
		$$E_t:=\{f\in \mathcal O(D)|\ \|f\|^2_t:=\int_{D}|f(z)|^2e^{-\varphi(t,z)}d\lambda(z)<\infty\},$$
		where $\mathcal O(D)$ is the space of holomorphic functions on $D$,  and $d\lambda$ is the Lebesgue measure on $\mc^m$.
		When $t$ varies in $U$, $E_t$ is invariant as a vector space, but the underlying inner product defined by the above norm varies if $\varphi$ is not constant with respect to $t$.
		Set $E:=\cup_{t\in U}E_t$ and view it as a trivial vector bundle over $U$ (of infinite rank) in a natural way, then $E$ is a holomorphic vector bundle with a Hermitian metric
        $h$ given by
		$$h_t(f,g):=\int_D f(z)\overline{g(z)} e^{-\varphi(t,z)}d\lambda(z),\ \forall\ f,g\in E_t.$$
		A fundamental result of Berndtsson is as follows		
		\begin{thm}[{\cite[Theorem 1.1]{Ber09}}]\label{thm:Bern direct image bd domain}
			With the above notations and assumptions, the curvature of $(E,h)$ is positive in the sense of Nakano,
			and is strictly positive in the sense of Nakano if $\varphi$ is strictly plurisubharmonic.
		\end{thm}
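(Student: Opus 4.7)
Following Berndtsson, the strategy is to compute the Chern connection and curvature of $(E,h)$ explicitly, then reduce Nakano positivity to H\"ormander's $L^2$-estimate applied fiberwise on $D$. A local holomorphic section $u$ of $E$ is a function $u(t,z)$ that is holomorphic in $t$ with $u(t,\cdot)\in E_t$; since the underlying vector space is constant in $t$, one has $D''=\bar\partial_t$, and the $(1,0)$-part is determined by $\partial_{t_j} h(u,v) = h(D'_j u,v)$ for holomorphic $v$. Differentiating under the integral sign and using that $P_t v = v$ for $v\in E_t$ gives
$$D'_j u \;=\; \frac{\partial u}{\partial t_j} \;-\; P_t\!\left(\frac{\partial\varphi}{\partial t_j}\,u\right),$$
where $P_t : L^2(D, e^{-\varphi(t,\cdot)}d\lambda)\to E_t$ is the Bergman projection.

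On holomorphic sections $\Theta = D''D'$, so $\Theta_{j\bar k}u = \bar\partial_{\bar t_k}(D'_j u)$. For local holomorphic sections $u_1,\ldots,u_n$ of $E$ representing a tangent vector $\xi=\sum_j u_j\,\partial/\partial t_j$, combine the identity $\partial^2_{t_j\bar t_k} h(u_j,u_k) = h(\Theta_{j\bar k} u_j,u_k)+h(D'_j u_j,D'_k u_k)$ with the formula for $D'$ to obtain a curvature identity of the shape
$$\sum_{j,k} h(\Theta_{j\bar k}u_j,u_k) \;=\; \int_D\Bigl(\sum_{j,k}\varphi_{j\bar k}\,u_j \bar u_k\Bigr)e^{-\varphi}d\lambda \;-\; \|\eta\|^2_\varphi,$$
where $\eta$ is the $L^2$-minimal solution of a $\bar\partial_z$-equation $\bar\partial_z \eta = g$ on the fiber $\{t\}\times D$, with $g$ canonically built from the $u_j$ and the $t$-derivatives of $\varphi$.

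Since $D$ is pseudoconvex and $\varphi(t,\cdot)$ is plurisubharmonic in $z$, H\"ormander's $L^2$-estimate applied to the combined $\bar\partial_z$-problem bounds $\|\eta\|^2_\varphi$ by the Hessian term on the right---this is where the full plurisubharmonicity of $\varphi$ on $\Omega$ is used. The Nakano pairing above is therefore non-negative, which is exactly Nakano positivity of $(E,h)$. When $\varphi$ is strictly plurisubharmonic, the Hessian $(\varphi_{j\bar k})$ is pointwise positive definite, which upgrades the inequality to a strict one and yields strict Nakano positivity.

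The main technical obstacle is the curvature computation in the second step: one must carefully analyze how the Bergman projection $P_t$ depends on $t$---equivalently, compute $\bar\partial_{\bar t_k}P_t(\varphi_{t_j}u_j)$ as an orthogonal projection inside $E_t$---and identify the remainder, after extracting the Hessian term, precisely as the squared norm of a minimal $\bar\partial_z$-solution to which H\"ormander's estimate applies. This identification is where the pseudoconvexity of $D$ and the plurisubharmonicity of $\varphi$ enter essentially into the argument.
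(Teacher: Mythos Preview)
This theorem is quoted from \cite{Ber09} and is not given an independent proof in the present paper; the paper uses it only as motivation and then proves the generalization Theorem~\ref{thm(extend):Bern direct image bd domain} by a different method. Your outline is essentially Berndtsson's original argument and is correct in spirit: the connection formula $D'_j u=\partial_{t_j}u-P_t(\varphi_{t_j}u)$, the curvature identity expressing the Nakano form as $\int_D\sum\varphi_{j\bar k}u_j\bar u_k\,e^{-\varphi}-\|\pi_\perp(\sum_j\varphi_{t_j}u_j)\|^2$, the identification of the second term with the $L^2$-minimal solution of $\bar\partial_z\eta=\sum_{j,p}\varphi_{t_j\bar z_p}u_j\,d\bar z_p$, and the H\"ormander bound combined with the Schur complement of the full Hessian of $\varphi$ are exactly the ingredients of \cite{Ber09}.

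The paper's route to its own results is genuinely different. It works with a moving family $\Omega_t$ and derives the curvature formula of Theorem~\ref{curvature formula}, which contains an additional boundary integral over $\partial\Omega_t$ weighted by a Levi-form expression $H(\rho)_{j\bar k}$ of the defining function, together with interior terms built from the auxiliary vector fields $\nu_j^p$. The H\"ormander-type step is then packaged as Lemma~\ref{lem:strictly positive vector bundle}. In the product situation $\Omega=U\times D$ one has $\rho_j=0$, hence $\nu_j^p=0$ and $H(\rho)_{j\bar k}=0$, and the paper's formula collapses to the one you wrote; but note the paper's hypotheses require each $\rho(t,\cdot)$ to be \emph{strictly} plurisubharmonic on $\overline{\Omega_t}$, so Theorem~\ref{thm(extend):Bern direct image bd domain} does not literally contain Theorem~\ref{thm:Bern direct image bd domain} for a merely pseudoconvex $D$. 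What the paper's formulation buys is the boundary term, which is the source of strict Nakano positivity under strict pseudoconvexity of the family (Theorem~\ref{thm:vector bundles of holomorphic functions}); that phenomenon is invisible from your product-domain computation.
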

		To generalize Theorem \ref{thm:Bern direct image bd domain}, we introduce some notions and notations which have been used in \cite{DHQ}
         with a few slight modifications.\\
		\indent Let $p\colon \mc^n\times\mc^m\ra\mc^n$ be the natural projection. For a set $A\subset \mc^n\times\mc^m$,
		we denote  $A_t:=p^{-1}(t)\cap A$, which is called the fiber of $A$ over $t$. Of course we can view $A_t$ as a family of subsets in $\mc^m$ depending on the parameter $t$.
		
		\begin{defn}\label{def:strict p.s.c family}\
			Let $U\subset\mc^n$ be a domain. \bi
			\item[(1)] A \emph{family of bounded domains} (of dimension $m$ over $U$) is a domain $\Omega\subset U\times\mc^m$ such that $p(\Omega)=U$
			and all fibers $\Omega_t\subset\mc^m\ (t\in U)$ are bounded.
			\item[(2)] A family of bounded domains $\Omega\subset U\times\mc^m$  has \emph{ $C^k$ ($k\in\mathbb{N}\cup\{\infty\}$) boundary }if  
             there exists a $C^k$-function $\rho(t,z)$
			defined on $\mc^n\times\mc^m$ such that $\Omega=\{(t,z)\in U\times\mc^m|\ \rho(t,z)<0\}$ and $d(\rho|_{\Omega_t})\neq 0$ 
            on $\partial\Omega_t$ for any $t\in U$.
			Such a function $\rho$ is  called a \emph{(boundary) defining function} of $\Omega$.
			\item[(3)] A family of bounded domains $\Omega\subset U\times\mc^m$ with $C^4$-boundary is said to \emph{have a plurisubharmonic defining function} if it admits a 
            $C^4$-boundary   defining function  $\rho$ that is plurisubharmonic on some neighborhood of $\overline{\Omega}$ (the closure of $\Omega$) and $\rho(t,\cdot)$ is \emph{strictly plurisubharmonic} on $\overline\Omega_t$ for any $t\in U$.
            If moreover $\rho$ is smooth, then we say $\Omega$ has a smooth plurisubharmonic defining function.
            \item[(4)] A family of bounded domains $\Omega\subset U\times\mc^m$ is called \emph{strictly pseudoconvex} if it admits a $C^4$-boundary defining function that is  strictly plurisubharmonic on some neighborhood of $\overline{\Omega}$. 
			\ei
		\end{defn}
        \indent Then we may generalize Theorem \ref{thm:Bern direct image bd domain} to the following
        \begin{thm}\label{thm(extend):Bern direct image bd domain}
        Let $\Omega\subset U\times\mc^m$ be a family of bounded domains over a domain $U\subset\mc^n$ which has a plurisubharmonic defining function $\rho$,
        and let $(F,h^F)$ be a Hermitian holomorphic trivial vector bundle of finite rank $r$ defined on some neighborhood of $\overline\Omega$.
        Let $\{e_1,\cdots,e_r\}$ be the canonical holomorphic frame of $F$. For any $t\in U$, set
    	$$E_t:=\{u=\sum_{\lambda=1}^r u_\lambda dz_1\wedge\cdots\wedge dz_m\otimes e_\lambda|\ u_\lambda \in \mo(\Omega_t)\cap L^2(\Omega_t)\text{ for all }\lambda\}$$
    	with an inner product $h_t$ given by
    	$$h_t^E(u,v):=\int_{\Omega_t} \sum_{\lambda,\mu=1}^ru_\lambda(z)\overline{v_\mu(z)}h^F_{(t,z)}(e_\lambda,e_\mu)d\lambda(z),\ \forall\ u,v \in E_t.$$
        We set $E:=\cup_{t\in U}E_t$, then $(E,h^E)$ is Nakano positive if $(F,h^F)$ is Nakano positive on $\Omega$, and it is Nakano strictly positive if $(F,h^F)$ is Nakano strictly positive on $\Omega$.
        \end{thm}
         \indent For the definition of Nakano positivity (resp. strict positivity), see Definition \ref{Curvature positivity of Berndtsson}
         and Definition \ref{Curvature positivity 1}. To prove Theorem \ref{thm(extend):Bern direct image bd domain}, we construct a curvature formula of $E$. More explicitly, we get the following
          \begin{thm}[see Theorem \ref{curvature_formula}] \label{curvature formula 1}
          Assumptions (or notations) of $\Omega,F,E$ as in Theorem \ref{thm(extend):Bern direct image bd domain}. For any  $u_1,\cdots,u_n\in C^2(U,E)$, we have (locally) 
            \begin{align}\label{formula 1}
                &\quad h^E(\Theta_{jk}^{(E,h^E)}u_j,u_k)\nonumber\\
                &=\sum_{\lambda,\mu}\int_{\Omega_t}H(h^F)_{jk\lambda\mu}u_{j\lambda}\overline{u_{k\mu}}d\lambda-\int_{\Omega_t}h^F(\pi_{\perp}(L_ju_j),\pi_{\perp}(L_ku_k))d\lambda \\
                &\quad+\sum_{p,q}\int_{\Omega_t}\frac{\partial\nu_j^p}{\partial\bar{z}_q}\frac{\partial\overline{\nu_k^q}}{\partial z_p} h^F\left(u_j,u_k\right)d\lambda+\int_{\partial\Omega_t}H(\rho)_{j\bar{k}}h^F(u_j,u_k)\frac{dS}{|\nabla\rho|},\nonumber
               \end{align}
               where 
               $$
                L_ju_j:=D_{t_j}^Fu_j+\sum_p D_{z_p}^F(\nu_j^p u_j),
               $$
               $$
                 H(h^F)_{jk\lambda\mu}:=A_{jk\lambda\mu}+\sum_p\nu_j^pA_{pk\lambda\mu}+\sum_q\overline{\nu_k^q}A_{jq\lambda\mu}+\sum_{p,q}\nu_j^p\overline{\nu_k^q}A_{pq\lambda\mu},
               $$
               $$
                H(\rho)_{j\bar{k}}:=\left(\rho_{j\bar{k}}-\sum_{p,s}\rho_{j\bar{s}}\rho^{s\bar{p}}\overline{\rho_{k\bar{p}}}\right)
                +\frac{(\rho_j-\sum_s\rho_{j\bar{s}}\rho^s)(\rho_{\bar{k}}-\sum_p\overline{\rho_{k\bar{p}}}\rho^{\bar{p}})}{|\partial\rho|^2}.
          $$
            \end{thm}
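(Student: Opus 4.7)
I plan to follow the Berndtsson-style direct image strategy, adapted to the moving-domain setting, by working on the total space $\Omega$ and choosing horizontal lifts of the base vector fields $\partial/\partial t_j$ that are tangent to the boundary $\partial\Omega$. I first identify the Chern connection of $(E,h^E)$ in terms of such lifts, then apply a standard curvature identity, and finally compute the resulting mixed second derivative of $h^E$ by differentiating under a moving integral sign.

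\emph{Horizontal lift and Chern connection.} For each $j$ take $V_j=\partial/\partial t_j+\sum_p\nu_j^p\,\partial/\partial z_p$ with coefficients $\nu_j^p$ determined by requiring $V_j\rho=0$ on $\partial\Omega$, which forces $\nu_j^p=-\sum_s\rho_{j\bar s}\rho^{s\bar p}$ so that $V_j$ is complex-tangent to each $\partial\Omega_t$. For a $C^2$ section $u$ of $E$ over $U$, viewed fibrewise as a holomorphic $F$-valued $(m,0)$-form on $\Omega_t$, the expression $L_j u = D^F_{t_j} u+\sum_p D^F_{z_p}(\nu_j^p u)$ is an $F$-valued $(m,0)$-form on $\Omega_t$ which is generally not $\dbar$-closed in $z$; its orthogonal projection onto holomorphic forms is (after the obvious identification) the Chern $(1,0)$-connection $D^E_{t_j}u$ of $(E,h^E)$, so that $\pi_\perp(L_j u)=L_j u - D^E_{t_j}u$ measures the fibrewise non-holomorphicity of $L_j u$.

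\emph{Reduction to the variation of the norm.} At a fixed base point $t_0$, extend each $u_j(t_0)$ to a holomorphic section $u_j$ of $E$ near $t_0$, so that $D''_{\bar t_k}u_j=0$ (since the formula $h^E(\Theta^E_{j\bar k}u_j,u_k)$ depends only on the pointwise values of $u_j,u_k$ at $t_0$, such a modification is harmless). Then
\[
h^E(\Theta^E_{j\bar k}u_j,u_k)=-\partial_{t_j}\bar\partial_{\bar t_k}\,h^E(u_j,u_k)+h^E(D^E_{t_j}u_j,\,D^E_{t_k}u_k)
\]
holds at $t_0$, and the task becomes the computation of $\partial_j\bar\partial_k$ of
\[
h^E(u_j,u_k)=\int_{\Omega_t}\sum_{\lambda,\mu}u_{j\lambda}\,\overline{u_{k\mu}}\,h^F_{\lambda\mu}\,d\lambda.
\]
Because $\Omega_t$ varies with $t$, the derivative produced by the Reynolds/transport formula (applied through the lift $V_j$) consists of an interior piece, carrying $V_j,V_k$ derivatives of the integrand, and a boundary piece on $\partial\Omega_t$ weighted by $-V_j\rho/|\nabla\rho|$; similarly for $\bar\partial_k$. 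Iterating and integrating by parts in the $z$-variables, using that $u_{j\lambda}$ are holomorphic in $z$, converts all remaining unwanted $z$-derivatives into additional boundary integrals.

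\emph{Assembly.} The surviving terms group as follows: (i) the pure second derivatives of $h^F$ together with the cross terms carrying $\nu_j^p$ factors recombine into the lifted curvature expression $H(h^F)_{jk\lambda\mu}$, which is exactly the curvature of $F$ evaluated on the lifted directions $V_j,V_k$; (ii) subtracting $h^E(\pi L_j u_j,\pi L_k u_k)$ from the corresponding $L_j$-squared term produces the negative $\pi_\perp$ contribution; (iii) the $\dbar_z$-derivative of the non-holomorphic lift $\nu_j^p$ gives the divergence-like term with $(\partial\nu_j^p/\partial\bar z_q)(\partial\overline{\nu_k^q}/\partial z_p)$; (iv) the boundary pieces, after using $V_j\rho=0$ on $\partial\Omega$ and decomposing the complex Hessian of $\rho$ into complex-tangential and normal parts along $\partial\Omega_t$, condense into $H(\rho)_{j\bar k}\,dS/|\nabla\rho|$. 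The hardest step is (iv) combined with the bookkeeping of (i)--(iii): the precise choice $\nu_j^p=-\rho_{j\bar s}\rho^{s\bar p}$ is tailored to absorb the normal part of the Hessian of $\rho$ into the interior terms, so that only the complex tangential Levi form survives at the boundary; verifying that all non-matching cross terms between interior and boundary cancel is the technical heart of the argument.
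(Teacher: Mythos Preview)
Your overall architecture---lift, Chern connection as projection of $L_j$, then unwind $h^E(\Theta^E_{j\bar k}u_j,u_k)$ via differentiation under the moving integral---is the same as the paper's. But two concrete points are off, and the second one is a genuine gap.

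First, your formula $\nu_j^p=-\sum_s\rho_{j\bar s}\rho^{s\bar p}$ does \emph{not} give $V_j\rho=0$ on $\partial\Omega$: with this choice $V_j\rho=\rho_j-\sum_s\rho_{j\bar s}\rho^s$, which has no reason to vanish. The paper (following Wang) takes instead
\[
\nu_j^p=\frac{\rho_j\rho^{\bar p}}{\rho-|\partial\rho|^2}-\sum_q\Big(\rho_{j\bar q}\rho^{q\bar p}+\frac{\rho_{j\bar q}\rho^{q}\rho^{\bar p}}{\rho-|\partial\rho|^2}\Big),
\]
defined on all of $\overline\Omega$ (not only on $\partial\Omega$), and this specific extension is what makes the later algebra close up: in particular it yields the second-order tangency $V_{\bar k}(V_j(\rho))|_{\partial\Omega}=0$, which is used to rewrite the boundary integrand as $H(\rho)_{j\bar k}$.

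Second, and more importantly, your account of where the boundary term comes from is inconsistent. Precisely because $V_j\rho=0$ on $\partial\Omega$, the transport/Reynolds step produces \emph{no} boundary contribution; the paper's differentiation lemma reads
\[
\partial_{t_j}\!\int_{\Omega_t}\!f\,d\lambda=\int_{\Omega_t}\!\Big(\partial_{t_j}f+\sum_p\partial_{z_p}(\nu_j^p f)\Big)d\lambda,
\]
a purely interior identity. The boundary term in the curvature formula arises only at a later stage: after computing $[N_j,\bar\partial_{t_k}]u_j$ and $\sum_q h^F(\bar\partial_{z_q}(L_ju_j),\nu_k^q u_k)$ one finds that their combination is an exact $z$-divergence
\[
-\sum_p\partial_{z_p}\!\Big(\big(\partial_{\bar t_k}\nu_j^p+\sum_q\overline{\nu_k^q}\,\partial_{\bar z_q}\nu_j^p\big)\,h^F(u_j,u_k)\Big)
\]
plus the $H(h^F)$ and $\partial_{\bar z_q}\nu_j^p\,\partial_{z_p}\overline{\nu_k^q}$ terms, and Stokes on this divergence is what puts the $H(\rho)_{j\bar k}$ integrand on $\partial\Omega_t$; the identification uses $V_{\bar k}(V_j(\rho))|_{\partial\Omega}=0$ and then an explicit evaluation of $\nu_j^p|_{\partial\Omega_t}$. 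Your step (iv), which attributes the boundary piece to a $-V_j\rho/|\nabla\rho|$ weight and to ``cancellation of non-matching cross terms'', misses this mechanism entirely, and with your incorrect $\nu_j^p$ the crucial vanishing $V_{\bar k}(V_j(\rho))|_{\partial\Omega}=0$ would fail as well.
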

        \begin{remark}
         Note that in the proof of Theorem \ref{curvature formula 1}, we didn't use the full strength that $\rho$ is a plurisubharmonic defining function,
         and we only use the fact that $\rho$ is a boundary defining function such that each $\rho(t,\cdot)$ is strictly plurisubharmonic on $\overline\Omega_t$
         for any $t\in U$. However, to get Theorem \ref{thm(extend):Bern direct image bd domain}, we must assume $\rho$ is a plurisubharmonic defining function.
        \end{remark}
        In particular, when $F$ is a trivial line bundle, we get 
        \begin{cor}\label{curvature formula:line bundle}
         Let $\Omega\subset U\times\mc^m$ be a family of bounded domains over a domain $U\subset\mc^n$ which has a plurisubharmonic defining function $\rho$,
        and let $(F,e^{-\varphi})$ be a Hermitian trivial line bundle defined on some neighborhood of $\overline\Omega$,
        where $\varphi\in C^2(\overline\Omega)$. Let $e$ be the canonical holomorphic frame of $F$. For any $t\in U$, set
    	$$E_t:=\{u= udz_1\wedge\cdots\wedge dz_m\otimes e|\ u\in \mo(\Omega_t)\cap L^2(\Omega_t)\}\cong \mo(\Omega_t)\cap L^2(\Omega_t)$$
    	with an inner product $h_t$ given by
    	$$h_t^E(u,v):=\int_{\Omega_t} u(z)\overline{v(z)}e^{-\varphi(t,z)}d\lambda(z),\ \forall u,v \in E_t,$$
        and set $E:=\cup_{t\in U}E_t.$ Then for any  $u_1,\cdots,u_n\in C^2(U,E)$, we have (locally) 
             \begin{align*}
                &\quad h^E(\Theta_{jk}^{(E,h^E)}u_j,u_k)\nonumber\\
                &=\sum_{\lambda,\mu}\int_{\Omega_t}H(\varphi)_{j\bar k}u_{j}\overline{u_{k}}e^{-\varphi}d\lambda-\int_{\Omega_t}\pi_{\perp}(L_ju_j)\overline{\pi_{\perp}(L_ku_k)}
                e^{-\varphi}d\lambda \\
                &\quad+\sum_{p,q}\int_{\Omega_t}\frac{\partial\nu_j^p}{\partial\bar{z}_q}\frac{\partial\overline{\nu_k^q}}{\partial z_p} u_j\overline{u_k}e^{-\varphi}d\lambda+\int_{\partial\Omega_t}H(\rho)_{j\bar{k}}u_j\overline{u_k}e^{-\varphi}\frac{dS}{|\nabla\rho|},\nonumber
               \end{align*}
               where $\pi_\perp\colon L^2(\Omega_t)\rw (\mo(\Omega_t)\cap L^2(\Omega_t))^{\perp}$ is the orthogonal projection with respect to the inner product 
               given by $e^{-\varphi}d\lambda$, and 
                $$
                L_ju_j:=\frac{\partial u_j}{\partial t_j}-u_j\varphi_j+\sum_p \left(\frac{\partial (\nu_j^p u_j)}{\partial z_p}-\nu_j^pu_j\varphi_p\right),
               $$
               $$
                 H(\varphi)_{j\bar k}:=\varphi_{j\bar k}+\sum_p\nu_j^p\varphi_{p\bar k}+\sum_q\overline{\nu_k^q}\varphi_{jq}+\sum_{p,q}\nu_j^p\overline{\nu_k^q}\varphi_{p\bar q},
               $$
               $$
                H(\rho)_{j\bar{k}}:=\left(\rho_{j\bar{k}}-\sum_{p,s}\rho_{j\bar{s}}\rho^{s\bar{p}}\overline{\rho_{k\bar{p}}}\right)
                +\frac{(\rho_j-\sum_s\rho_{j\bar{s}}\rho^s)(\rho_{\bar{k}}-\sum_p\overline{\rho_{k\bar{p}}}\rho^{\bar{p}})}{|\partial\rho|^2}.
               $$
        \end{cor}
        \indent All notations and related concepts in Theorem \ref{curvature formula 1} will be explained in Section \ref{section:curvature formula}. 
        When $F$ is a line bundle, the curvature formula has been constructed in \cite{Wang17}, see \cite[Theorem 2.11]{Wang17}.
        Different from the method using in \cite{Wang17} which depends strongly on Siu's theorem and Hamilton's theorem (see the proof of \cite[Lemma 4.7]{Wang17}), the method we adopt is more direct and we didn't make any additional requirements on the positivity of the vector bundle $F$ (comparing formula (2.26) of \cite{Wang17}). 
       After we construct the curvature formula for vector bundles, Theorem \ref{thm(extend):Bern direct image bd domain}
        is then a simple consequence of it. Note that to make things simple and to make the main idea clear, we only consider sections of $E$
        which are $C^2$-smooth up to the boundary of $\Omega$. To make the main idea of the proof of Theorem \ref{thm(extend):Bern direct image bd domain} clear,
        the case that $F$ is a line bundle, i.e., the proof of Corollary \ref{curvature formula:line bundle} is given in Section \ref{section:curvature formula}, and the general case is given in the appendix \ref{appendix}.\\
        \indent As mentioned in the abstract, our main purpose is to study curvature strict positivity of direct image bundles.  We state the main result of this paper as follows
        \begin{thm}\label{thm:vector bundles of holomorphic functions}
         Assumptions (or notations) of $\Omega,F,E$ as in Theorem \ref{thm(extend):Bern direct image bd domain}.
          We moreover assume $\Omega$ is a strictly pseudoconvex family of bounded domains over $U$, then $(E,h^E)$ is strictly positive in the sense of Nakano 
         if $(F,h^F)$ is just Nakano positive on $\Omega$.
        \end{thm}
        Note that we didn't assume $(F,h^F)$ is Nakano strictly positive, so we may think the curvature strict positivity of $(E,h^E)$ comes from 
        the strict pseudoconvexity of $\Omega$.\\
        \indent Before we state the main idea of the proof of Theorem \ref{thm:vector bundles of holomorphic functions}, let us first give some applications 
        of it to complex analysis and convex analysis.\\
        \indent By Theorem \ref{thm:vector bundles of holomorphic functions}, Corollary 1.4 in \cite{DHQ} holds for an arbitrary  strictly pseudoconvex family of bounded domains, without symmetry. The proof is similar to that proof, so we omit it here.
        \begin{cor}
         Let $\Omega\subset U\times\mc^m$ be a strictly pseudoconvex family of bounded domains  over a domain $U\subset\mc^n$ and let $\varphi
         \in C^0(\overline\Omega)\cap C^2(\Omega)$ be a plurisubharmonic function on $\Omega$. For any $t\in U$, let $K(t,z)$ be the weighted Bergman kernel of $\Omega_t$ with weight $\varphi(t,\cdot),$ then $\ln K(t,z)$ is a strictly plurisubharmonic function on $\Omega$.
        \end{cor}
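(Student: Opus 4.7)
The plan is to combine Theorem \ref{thm:vector bundles of holomorphic functions} with the realization of the weighted Bergman kernel as the squared norm of an evaluation section of the dual direct image bundle, and then to read off strict positivity on all of $T^{1,0}\Omega$ by separately controlling the horizontal and vertical directions. First I would take $(F,h^F)$ to be the trivial line bundle equipped with the Hermitian metric $e^{-\varphi}$. Because $\varphi$ is only $C^{2}$ on $\Omega$ and merely continuous on $\overline{\Omega}$, while Theorem \ref{thm:vector bundles of holomorphic functions} is stated for a bundle on a neighborhood of $\overline{\Omega}$, I would first perform a regularization $\varphi_{\varepsilon}\searrow\varphi$ by smooth plurisubharmonic functions on a neighborhood of $\overline{\Omega}$, apply the theorem for each $\varepsilon$ (using the strict pseudoconvexity of $\Omega$) to conclude that the associated direct image bundle $(E_{\varepsilon},h^{E_{\varepsilon}})$ is strictly Nakano positive on $U$, and recover the claim for $\varphi$ at the end by passing to the limit. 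Dually, $(E_{\varepsilon}^{*},h^{E_{\varepsilon},*})$ is strictly Griffiths negative.

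Next I would identify, at every $(t,z)\in\Omega$,
\[
K(t,z)=\|\mathrm{ev}_{z}\|^{2}_{E^{*}_{t}},
\]
where $\mathrm{ev}_{z}\in E^{*}_{t}$ is evaluation at $z$. The assignment $(t,z)\mapsto\mathrm{ev}_{z}$ is a nowhere vanishing holomorphic section $\mathrm{ev}$ of the pullback bundle $p^{*}E^{*}$ over $\Omega$, because for any holomorphic section $u$ of $E$ the pairing $\langle\mathrm{ev}_{(t,z)},u_{t}\rangle=u(t,z)$ is holomorphic in $(t,z)$. Applying the standard identity
\[
i\partial\bar\partial\ln\|\mathrm{ev}\|^{2}=\frac{\|P_{\mathrm{ev}^{\perp}}D'\mathrm{ev}\|^{2}}{\|\mathrm{ev}\|^{2}}-\frac{i\langle\Theta^{p^{*}E^{*}}\mathrm{ev},\mathrm{ev}\rangle}{\|\mathrm{ev}\|^{2}}
\]
then exhibits $i\partial\bar\partial\ln K$ as the sum of two non-negative Hermitian forms on $T^{1,0}\Omega$, so the problem reduces to showing that their null spaces meet only at $0$.

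To carry this out at a fixed base point $(t_{0},z_{0})$, I would analyze each summand separately. Since $\Theta^{p^{*}E^{*}}$ is the $p$-pullback of $\Theta^{E^{*}}$, the curvature summand vanishes exactly on the fiber directions $T^{1,0}_{z_{0}}\Omega_{t_{0}}$ and is strictly positive in every nonzero horizontal $v\in T^{1,0}_{t_{0}}U$ by the Griffiths strict negativity of $E^{*}$. Restricted to the null space $T^{1,0}_{z_{0}}\Omega_{t_{0}}$, the pullback connection satisfies $D'_{v_{z}}\mathrm{ev}=\partial_{v_{z}}\mathrm{ev}_{z}$, so the Cauchy--Schwarz defect reduces to $\partial_{z}\bar\partial_{z}\ln K_{t_{0}}(z,z)$, which is the weighted Bergman metric of the bounded domain $\Omega_{t_{0}}$ at $z_{0}$; this is strictly positive by the classical positivity of the Bergman metric on bounded domains (e.g.\ polynomials separate points and tangent vectors in the weighted Bergman space). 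Therefore the two null spaces intersect only at $0$, whence $i\partial\bar\partial\ln K>0$ on $T^{1,0}\Omega$, as required.

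The main obstacle I anticipate is not the algebraic decomposition above but the boundary regularity bookkeeping: carrying out the smoothing $\varphi_{\varepsilon}\searrow\varphi$ and verifying that the strict plurisubharmonicity of $\ln K_{\varepsilon}$ descends to $\ln K$ demands locally uniform lower bounds on $i\partial\bar\partial\ln K_{\varepsilon}$, not just pointwise convergence of the kernels. A cleaner alternative I would pursue is to inspect the proof of Theorem \ref{curvature formula 1} and check that all relevant curvature identities remain valid for $\varphi\in C^{2}(\Omega)\cap C(\overline{\Omega})$, which is presumably the route the author implicitly has in mind by saying the proof parallels that of \cite{DHQ}.
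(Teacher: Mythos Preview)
Your proposal is correct and follows the standard route: realize $K(t,z)$ as $\|\mathrm{ev}_{(t,z)}\|^2$ for the evaluation section of $p^*E^*$, apply the curvature identity for $\log\|\mathrm{ev}\|^2$, and then split into the horizontal contribution (controlled by the strict Nakano positivity of $E$ from Theorem~\ref{thm:vector bundles of holomorphic functions}) and the vertical contribution (the weighted Bergman metric of $\Omega_t$). The paper itself omits the proof entirely, pointing instead to the argument of \cite[Corollary~1.4]{DHQ}; your outline is precisely that argument.

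One remark on the regularity issue you flag at the end: you are right that a naive limiting argument $\varphi_\varepsilon\searrow\varphi$ does not automatically preserve \emph{strict} plurisubharmonicity. However, the proof of Theorem~\ref{thm:vector bundles of holomorphic functions} shows that the strict positivity of $\Theta^{(E,h^E)}$ comes from the boundary term $\int_{\partial\Omega_{t_0}}H_0(\rho)_{j\bar k}\,h^F(u_j,u_k)\,\frac{dS}{|\nabla\rho|}$, whose lower bound (via Lemma~\ref{lem:basic inequality for holomorphic functions} and Corollary~\ref{cor:positive definite}) depends only on the defining function $\rho$ and the geometry of $\Omega_{t_0}$, not on the curvature of $F$. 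Hence the constants $\delta_1,\delta_2,\delta_3$ in that proof are uniform in $\varepsilon$, and the quantitative Griffiths lower bound for $-\Theta^{E^*_\varepsilon}$ survives the limit. Alternatively, as you suggest, one checks directly that the curvature formula and Corollary~\ref{cor:positive vector bundle} go through when $h^F=e^{-\varphi}$ with $\varphi\in C^2(\Omega)\cap C^0(\overline\Omega)$: the only place $h^F$ appears on the boundary is in the term $\int_{\partial\Omega_t}H(\rho)_{j\bar k}\,h^F(u_j,u_k)\,\frac{dS}{|\nabla\rho|}$, which needs only continuity of $\varphi$ up to $\partial\Omega_t$, while the interior terms require $i\partial\bar\partial\varphi\geq 0$ on $\Omega_t$.
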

       \indent  Recall a domain $D\subset\mc^m$ is called a circular domain if it is invariant under the action of $\mathbb{S}^1$ given by 
        $$e^{i\theta}\cdot(z_1,\cdots,z_m)=\left(e^{i\theta}z_1,\cdots,e^{i\theta}z_m\right),\ \forall\ \theta\in\mr,$$
        and is called a Reinhardt domain if it is invariant under the action of $\mathbb{T}^m$ given by 
        $$\left(e^{i\theta_1},\cdots,e^{i\theta_m}\right)\cdot (z_1,\cdots,z_m)=\left(e^{i\theta_1}z_1,\cdots,e^{i\theta_m}z_m\right),\ \forall\ \theta_1,\cdots,\theta_m\in\mr.$$
        It is clear that \cite[Theorem 1.4]{DHJQ} is a simple consequence of Theorem \ref{thm:vector bundles of holomorphic functions}
        (see \cite[Lemma 3.1]{DHQ} for the details). Note that we don't require $(F,h^F)$ is Nakano positive on some neighborhood of $\overline\Omega$ in Theorem \ref{thm:vector bundles of holomorphic functions}. However, the requirement $(F,h^F)$ is Nakano positive on some larger domains is indispensable in the proof of  \cite[Theorem 1.4]{DHJQ}.
        For convenience, we state the precise statement as follows. The proof is a simple modification of \cite[Lemma 3.1]{DHQ}, so we won't give it.
        \begin{cor}\label{qin:application}
        Let $\Omega\subset U\times\mc^m$ be a strictly pseudoconvex family of bounded domains over a domain $U\subset\mc^n$
        and let $(F,h^F)$ be a Hermitian trivial vector bundle of finite rank $r$ defined on some neighborhood of $\overline\Omega$.
        Let $\{e_1,\cdots,e_r\}$ be the canonical holomorphic frame of $F$.
		We assume that all fibers $\Omega_t\ (t\in U)$ are (connected) Reinhardt domains and $h^F_{(t,z)}$ is $\mathbb{T}^n$-invariant with respect to $z$ for any $t\in U$.
        For any $t\in U$, set $E_t:=\mc^r$, with an inner product given by 
        $$h_t^E(u,v):=\int_{\Omega_t}\sum_{\lambda,\mu=1}^ru_\lambda\overline{v_\mu} h^F_{(t,z)}(e_\lambda,e_\mu)d\lambda(z),\ \forall\ u,v\in E_t.$$
        We set $E:=\cup_{t\in U}E_t$ and view it as a Hermitian holomorphic (trivial) vector bundle over $U$, then the curvature of $(E,h^E)$ 
        is  strictly positive in the sense of Nakano if $(F,h^F)$ is Nakano positive on $\Omega$.
        \end{cor}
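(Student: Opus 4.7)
The plan is to realise $E$ as a parallel orthogonal summand of the direct image bundle $\widetilde{E}$ of Theorem \ref{thm(extend):Bern direct image bd domain}, and then invoke Theorem \ref{thm:vector bundles of holomorphic functions} on $\widetilde{E}$. To each $u = (u_1,\ldots,u_r) \in E_t = \mc^r$ I would assign the constant-coefficient holomorphic $m$-form $\iota_t(u) := \sum_{\lambda} u_\lambda \, dz_1 \wedge \cdots \wedge dz_m \otimes e_\lambda \in \widetilde{E}_t$, which is integrable because $\Omega_t$ is bounded. A direct check shows that the Hermitian metric $h^{\widetilde{E}}$ restricted to the image of $\iota$ is exactly $h^E$, so $\iota$ is a holomorphic isometric embedding of Hermitian vector bundles $E \hookrightarrow \widetilde{E}$ over $U$.

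Next I would exploit the Reinhardt symmetry. The torus $\mathbb{T}^m$ acts fibre-wise on $\Omega$ by the coordinate rotations $(e^{i\theta_1},\ldots,e^{i\theta_m}) \cdot (z_1,\ldots,z_m) = (e^{i\theta_1} z_1,\ldots,e^{i\theta_m} z_m)$; because each $\Omega_t$ is Reinhardt the action preserves the total space, and because $h^F$ is $\mathbb{T}^m$-invariant in $z$ and Lebesgue measure is $\mathbb{T}^m$-invariant, pull-back under this action is a unitary holomorphic bundle automorphism of $\widetilde{E}$ over $U$. The resulting $\mathbb{T}^m$-eigenspace decomposition produces an orthogonal direct sum of holomorphic sub-bundles $\widetilde{E} = \bigoplus_{\alpha \in \mathbb{Z}^m} \widetilde{E}^{\alpha}$. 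Since $dz_1 \wedge \cdots \wedge dz_m$ transforms with weight $\mathbf{1} := (1,\ldots,1)$ while a monomial coefficient $z^\beta$ transforms with weight $\beta$, the image $\iota(E)$ is exactly the weight-$\mathbf{1}$ summand $\widetilde{E}^{\mathbf{1}}$.

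Finally I would transfer strict Nakano positivity along this decomposition. A unitary holomorphic automorphism of a Hermitian holomorphic bundle commutes with its Chern connection, so each eigen sub-bundle $\widetilde{E}^\alpha$ is parallel and the Chern curvature of $\widetilde{E}$ splits as an orthogonal direct sum of the Chern curvatures of the $\widetilde{E}^\alpha$. Theorem \ref{thm:vector bundles of holomorphic functions} applies to $\widetilde{E}$ (strict pseudoconvexity of $\Omega$ together with Nakano positivity of $F$) and yields strict Nakano positivity of $(\widetilde{E}, h^{\widetilde{E}})$; restricting the curvature to the summand $\widetilde{E}^{\mathbf{1}} \cong E$ then gives the desired strict Nakano positivity of $(E, h^E)$. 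The main obstacle I anticipate is verifying rigorously in this infinite-rank setting that the torus action has sufficient regularity to commute with the Chern connection on $\widetilde{E}$, so that the eigen sub-bundles really are parallel holomorphic sub-bundles; granted this, the curvature splitting is automatic and no further local calculation beyond Theorem \ref{thm:vector bundles of holomorphic functions} is required.
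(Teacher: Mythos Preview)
Your proposal is correct and follows essentially the same route the paper has in mind: the paper omits the argument but points to \cite[Lemma~3.1]{DHQ}, which is precisely the torus-eigenspace decomposition you describe---embed $E$ as the weight-$\mathbf{1}$ summand of the big direct image bundle, observe that the $\mathbb{T}^m$-action is a holomorphic isometry (hence commutes with the Chern connection, so the eigenspaces are parallel), and then restrict the strict Nakano positivity of Theorem~\ref{thm:vector bundles of holomorphic functions}. Your identification of the one technical point (regularity of the torus action on $C^\infty$ sections so that the projection commutes with the Chern connection) is apt, and in the framework of Section~\ref{section:curvature formula} this is routine because the action is linear in the fibre and does not touch the $t$-variable.
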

       \indent Of course, similar to Theorem 1.2 and Theorem 1.3 of \cite{DHJQ}, we may replace $E_t$ by the space of homogeneous polynomials of the same degree,
       or we can replace Reinhardt domains by circular domains. More generally, it is also possible to consider any (smooth) action of compact Lie group
       and the corresponding irreducible unitary sub-representations according to the well known Peter-Weyl theorem. However, we didn't pursue this here.\\
        \indent Using Corollary \ref{qin:application} and the same method in the proof of \cite[Corollary 1.5]{DHJQ},  we can get the curvature strict positivity of a strict convex family of 
        bounded domains (the definition is similar to Definition \ref{def:strict p.s.c family}, see \cite[Page 6]{DHQ} or \cite[Definition 2.3]{DHJQ}).	
		 \begin{cor}\label{qin:convex}
			 Let $D\subset U_0\times\mr^m$ be a  strictly convex family of bounded domains (with connected fibers) over a domain $U_0\subset\mr^n,$
            and let $(F,g^F)$ be a trivial vector bundle of finite rank $r$ defined on some neighborhood  of $\overline D$.
            Let $\{e_1,\cdots,e_r\}$ be the canonical frame of $F$.
            For any $t\in U_0$, set $E_t:=\mr^r$, with an inner product $g_t^E$ given by 
            $$g_t^E(u,v):=\int_{D_t}\sum_{\lambda,\mu=1}^ru_\lambda v_\mu g^F_{(t,x)}(e_\lambda,e_\mu)d\lambda(x)$$
            for all 
            $$u:=(u_1,\cdots,u_r)\in E_t,\ v:=(v_1,\cdots,v_n)\in E_t.$$
            We set $E:=\cup_{t\in U_0}E_t$ and view it as a Riemannian (trivial) vector bundle over $U_0$, then the curvature of $(E,g^E)$ 
            is strictly positive in the sense of Nakano if $(F,g^F)$ is Nakano positive on  $D$. 
		\end{cor}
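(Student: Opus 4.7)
The plan is to reduce Corollary \ref{qin:convex} to Corollary \ref{qin:application} by the standard Pr\'ekopa-type complexification through the coordinate-wise log-modulus map $\pi_k\colon(\mc^*)^k\ra\mr^k$, $\pi_k(w_1,\ldots,w_k):=(\log|w_1|,\ldots,\log|w_k|)$, following the strategy of \cite[Corollary 1.5]{DHJQ}. I would first complexify the base and the family by setting $U:=\pi_n^{-1}(U_0)\subset(\mc^*)^n$ and
\[
\Omega:=\{(t,z)\in U\times(\mc^*)^m\ :\ (\pi_n(t),\pi_m(z))\in D\},
\]
both of which are Reinhardt open sets. If $\rho(s,x)$ is a $C^4$ strictly convex defining function of $D$ on a neighborhood of $\overline D$, then $\tilde\rho(t,z):=\rho(\pi_n(t),\pi_m(z))$ is a $\mathbb{T}^{n+m}$-invariant $C^4$ defining function for $\Omega$; a direct chain-rule computation using $\ddbar\log|w_j|^2=0$ shows that the complex Hessian of $\tilde\rho$ equals the pullback via $(\pi_n,\pi_m)$ of the real Hessian of $\rho$, so strict convexity of $\rho$ upgrades to strict plurisubharmonicity of $\tilde\rho$ near $\overline\Omega$. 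Thus $\Omega$ is a strictly pseudoconvex family of bounded Reinhardt domains over $U$.

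Next, I would lift the bundle to $(\tilde F,h^{\tilde F})$ on a neighborhood of $\overline\Omega$ by keeping the canonical frame $\{e_1,\ldots,e_r\}$ and setting
\[
h^{\tilde F}_{(t,z)}(e_\lambda,e_\mu):=g^F_{(\pi_n(t),\pi_m(z))}(e_\lambda,e_\mu);
\]
the resulting metric is $\mathbb{T}^{n+m}$-invariant. The analogous chain-rule computation applied coefficient-wise should show that the complex Nakano form of $h^{\tilde F}$ is exactly the pullback of the real Nakano form of $g^F$, so Nakano positivity of $(F,g^F)$ on $D$ transfers to Nakano positivity of $(\tilde F,h^{\tilde F})$ on $\Omega$. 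Corollary \ref{qin:application} then yields that the associated direct image bundle $\tilde E$ over $U$ is strictly Nakano positive.

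To descend back to $U_0$, I would use that the map $E_{\pi_n(t)}\otimes_\mr\mc\ra\tilde E_t$ sending $u\in\mc^r$ to the corresponding constant section of $\tilde E_t=\mc^r$ is, up to a positive multiplicative constant obtained by integrating the $\mathbb{T}^m$-invariant weight over the fiber torus (the Jacobian factor of $\pi_m$), an isometric embedding of $E\otimes_\mr\mc$ into the weight-zero summand of $\tilde E$. Restricting the strict complex Nakano form of $\tilde E$ to this sub-bundle and to real tangent directions of the real slice $U_0\hookrightarrow U$ then produces strict positivity of the real Nakano form of $(E,g^E)$, as required.

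The main obstacle I anticipate is the Nakano-positivity transfer in the second step: for line bundles it is the classical Pr\'ekopa identity, but for higher rank one must check that $\mathbb{T}^{n+m}$-invariance of $h^{\tilde F}$ kills the mixed-type contributions introduced by the non-linearity of $\log|w|$, leaving only the pullback of the real Hessian of $g^F$ in the $(s,x)$ variables. This is routine bookkeeping, and the remainder of the argument is formal.
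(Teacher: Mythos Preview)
Your approach---complexify via the log--modulus map, invoke Corollary~\ref{qin:application}, then descend---is exactly what the paper intends when it cites the method of \cite[Corollary 1.5]{DHJQ}. There is, however, a real gap in your descent step. You assert that $E_{\pi_n(t)}\otimes_{\mr}\mc\to\tilde E_t$ is an isometry up to a positive \emph{constant} coming from the torus integration, but under $z_p=e^{x_p+i\theta_p}$ one has $d\lambda(z)=\prod_p e^{2x_p}\,dx\,d\theta$, so
\[
h^{\tilde E}_t(u,v)=(2\pi)^m\int_{D_{\pi_n(t)}}\sum_{\lambda,\mu}u_\lambda\overline{v_\mu}\,g^F_{(\pi_n(t),x)}(e_\lambda,e_\mu)\Bigl(\prod_p e^{2x_p}\Bigr)dx,
\]
which differs from $g^E_{\pi_n(t)}$ by the \emph{non-constant} weight $\prod_p e^{2x_p}$ inside the integral, not by a scalar. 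The fix is easy: replace $h^{\tilde F}$ by $\bigl(\prod_p|z_p|^{-2}\bigr)h^{\tilde F}$. Since $\sum_p\log|z_p|^2$ is pluriharmonic on $(\mc^*)^m$, this twist leaves the curvature of $\tilde F$ unchanged and the metric remains $\mathbb T^m$-invariant, so Corollary~\ref{qin:application} still applies; with this correction the two inner products agree up to the genuine constant $(2\pi)^m$ and the rest of your argument goes through. By contrast, the obstacle you flag in step two is not one: pluriharmonicity of $\log|w_j|$ and the chain rule give $\Theta^{\tilde F}_{\alpha\bar\beta}=\tfrac{1}{4w_\alpha\overline{w_\beta}}R^F_{\alpha\beta}$, and the substitution $\xi_\alpha\mapsto 2w_\alpha\xi_\alpha$ identifies the complex Nakano form of $\tilde F$ with the complexified real Nakano form of $g^F$, with no cross terms to worry about.
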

         \indent Corollary \ref{qin:convex} is a corresponding result  of Pr\'ekopa's Theorem.  For the classical version of Pr\'ekopa's Theorem, see \cite{Pre73},
         and see \cite{Rau13} for the matrix valued version.  Note that different from that two papers, we here emphasize the strict positivity of $(E,g^E)$ when $D$ is a strictly convex family of bounded domains.\\  
        \indent Now we state the main idea of the proof of Theorem \ref{thm:vector bundles of holomorphic functions}.
        In the curvature formula (\ref{formula 1}), after we summing all $j,k$, it is clear that the first three terms in the right hand side is nonnegative,
        and the last term will produce a positive lower bound by the following simple observation, which is what we wanted.
         \begin{lem}\label{lem:basic inequality for holomorphic functions}
         \cite[Corolllary 1.7]{DJQ}
        Let $D\subset\mc^m$ be a bounded domain with $C^1$-boundary, then there is a constant $\delta:=\delta(D)>0$ such that 
       $$
       \delta\int_{D}|f|^2d\lambda\leq \int_{\partial D}|f|^2dS,\ \forall f\in \mo(D)\cap C^0(\overline{D}),
       $$
        where $dS$ is the surface measure of $\partial D$, $C^0(\overline{D})$ denotes the space of complex valued functions that are continuous to the boundary of $D$.
        \end{lem}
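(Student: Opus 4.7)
\indent My plan is to apply Green's second identity to $|f|^2$ and a suitable auxiliary function, exploiting the subharmonicity of $|f|^2$. Since $f$ is holomorphic and continuous on $\overline{D}$, one has $\Delta|f|^2 = 4\sum_{j=1}^m|\partial f/\partial z_j|^2 \geq 0$ on $D$, so $|f|^2$ is a nonnegative, continuous subharmonic function on $\overline{D}$.

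\indent Let $v \in C(\overline{D}) \cap C^\infty(D)$ solve the Dirichlet problem $\Delta v = 1$ on $D$ with $v|_{\partial D} = 0$; such $v$ exists since $D$ is bounded with $C^1$-boundary, and by the maximum principle $v \leq 0$ on $\overline{D}$. By Hopf's lemma together with boundary regularity for $C^1$ domains, the outward normal derivative $\partial_\nu v$ extends continuously and boundedly to $\partial D$; set $M := \sup_{\partial D} \partial_\nu v < \infty$. Green's second identity (once justified) yields
$$\int_D |f|^2\, d\lambda \;-\; \int_D v\,\Delta|f|^2\, d\lambda \;=\; \int_{\partial D}|f|^2\, \partial_\nu v\, dS,$$
using $v|_{\partial D}=0$ to kill the boundary term involving $\partial_\nu|f|^2$. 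Since $v \leq 0$ and $\Delta|f|^2 \geq 0$, the middle term is nonpositive, so
$$\int_D|f|^2\,d\lambda \;\leq\; \int_{\partial D}|f|^2\,\partial_\nu v\,dS \;\leq\; M\int_{\partial D}|f|^2\,dS,$$
and we may take $\delta := 1/M$.

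\indent The main obstacle is justifying Green's identity when $f$ is only continuous up to $\partial D$: by Cauchy estimates $|\nabla f(z)|$ may blow up like $1/\operatorname{dist}(z,\partial D)$, so $\Delta|f|^2$ need not be integrable on $D$, and $\int_D v\,\Delta|f|^2\,d\lambda$ might diverge. To circumvent this, I would apply the identity on a smooth interior exhaustion $D_k \subset\subset D$ with $D_k \uparrow D$ (where $f \in C^\infty(\overline{D_k})$), obtaining
$$\int_{D_k}|f|^2\,d\lambda \;\leq\; \int_{\partial D_k}|f|^2\,\partial_\nu v\,dS \;-\; \int_{\partial D_k} v\,\partial_\nu|f|^2\,dS,$$
and then pass to the limit. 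The delicate point is the last boundary integral; one shows it vanishes in the limit by choosing the $D_k$ as level sets $\{\rho < -1/k\}$ of a fixed $C^1$ defining function $\rho$ of $D$, so that $|v| = O(1/k)$ on $\partial D_k$, and controlling $\partial_\nu|f|^2$ on $\partial D_k$ via dominated convergence on the product $v\,\partial_\nu|f|^2$. An equivalent route is to first establish the inequality for $f$ smooth on $\overline{D}$ (where Green's identity applies directly), and then approximate a general $f \in \mo(D)\cap C^0(\overline D)$ by smooth holomorphic functions: when $D$ is star-shaped about $z_0$, the dilations $f_r(z) := f(z_0 + r(z-z_0))$ (for $r<1$) are holomorphic on neighborhoods of $\overline{D}$ and converge uniformly to $f$ on $\overline D$ as $r\to 1^-$, and the general $C^1$-case reduces to this by a local argument near $\partial D$. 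Either route produces the desired inequality with $\delta = 1/M$.
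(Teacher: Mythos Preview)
The paper does not include its own proof of this lemma; it is quoted from \cite[Corollary~1.7]{DJQ} and used as a black box (with the remark that \cite[Lemma~6.6]{DJQ} even gives an explicit value of $\delta$), so there is no argument in the paper to compare yours against.

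Your approach---pairing the subharmonic function $|f|^2$ with the torsion function $v$ (the solution of $\Delta v=1$, $v|_{\partial D}=0$) via Green's identity---is a natural and essentially correct route to the inequality, but two points need tightening. First, the crucial bound $M:=\sup_{\partial D}\partial_\nu v<\infty$ (equivalently, $|v(z)|\leq C\operatorname{dist}(z,\partial D)$) is not a consequence of Hopf's lemma, which only gives a strict \emph{lower} bound on $\partial_\nu v$; what you actually need is an \emph{upper} bound, i.e.\ Lipschitz regularity of $v$ up to $\partial D$. For $C^{1,\alpha}$ or $C^2$ boundaries this is standard Schauder theory, but for bare $C^1$ boundary it is more delicate and your appeal to ``boundary regularity for $C^1$ domains'' hides real work (one route: exploit the uniform exterior cone condition that $C^1$ boundaries enjoy, with cones of half-angle arbitrarily close to $\pi/2$, and build cone barriers). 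Second, your exhaustion argument hinges on exactly this Lipschitz bound, since you need $|v|=O(1/k)$ on $\partial D_k$ to kill $\int_{\partial D_k} v\,\partial_\nu|f|^2\,dS$; once that is in hand, the uniform continuity of $f$ on $\overline D$ does upgrade the Cauchy estimate to $\operatorname{dist}(z,\partial D)\,|\nabla f(z)|\to 0$, making the product $o(1)$ as required. Finally, your fallback ``the general $C^1$-case reduces to star-shaped by a local argument'' is not a proof as stated: localizing $f$ by cutoffs destroys holomorphicity, so that step would need a genuinely different mechanism (e.g.\ working instead with the harmonic majorant of $|f|^2$, which \emph{can} be approximated without touching $f$ itself).
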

        \indent Lemma \ref{lem:basic inequality for holomorphic functions} plays an essential role in the proof of Theorem \ref{thm:vector bundles of holomorphic functions}, see \cite{DGQ} and \cite{DJQ} for more related inequalities. When $\Omega$ is strictly pseudoconvex, Lemma \ref{lem:basic inequality for holomorphic functions} have appeared implicitly in the proof of \cite[Proposition 2.2.2]{Ber95}.
        \begin{remark}
            By Lemma 6.6 of \cite{DJQ}, we may give a concrete value of the above $\delta$, and then we can give a concrete lower bound for the curvature strict positivity  of $(E,h^E)$,
            which only depends on $|\Omega_t|,|\partial\Omega_t|,m$ and $\rho$. 
        \end{remark}
        \indent Motivated by the reviewer of the first version of this paper, we further find a flatness criterion for the direct image bundles as a consequence of the curvature formula
        in Theorem \ref{curvature formula 1}. To state it, let us recall the notion of triviality of a family of bounded domains. 
        \begin{defn}\cite[Definition 2.16]{Wang17}\label{defn:flat}
         Let $p\colon \Omega\subset U\times\mc^m\rw U$ be a family of bounded domains which has a smooth plurisubharmonic defining function over a domain $U\subset\mc^n$, then we say $\Omega$ is {\it locally trivial} if 
         for any $t_0\in  U$ , there are an open neighborhood $U_0\subset U$ of $t_0$  and  a biholomorphic map $\Phi\colon U_0\times\Omega_{t_0}\rw p^{-1}(U_0)$ such that $\Phi$ can be extended smoothly to a neighborhood of $U_0\times \overline{\Omega_{t_0}}$ and  
        $$\Phi(\{t\}\times \Omega_{t_0})=\Omega_t,\ \forall\ t\in U_0.$$ 
        \end{defn} 
        \indent In Definition 2.16 of \cite{Wang17}, different from our requirements that $\Phi$ can be extended smoothly to a neighborhood of  $U_0\times \overline{\Omega_{t_0}},$
         the author just requires that $\Phi_*(\frac{\partial}{\partial t_j})$ can be extended smoothly to the boundary  of $\Omega$ for any $1\leq j\leq n$.
         We don't know whether the two definitions are equivalent or not, but Definition \ref{defn:flat} is more convenient in the proof 
         of the part (2) implies (1) of Theorem \ref{thm:flatness criterion}.\\
        \indent Now we can state a flatness criterion for the direct image bundles.
        \begin{thm}\label{thm:flatness criterion}
         Let $\Omega\subset U\times\mc^m(m\geq 2)$ be a family of bounded domains which has a plurisubharmonic defining function $\rho$ over a domain $U\subset\mc^n,$
        and let $(F,h^F)$ be a Hermitian holomorphic trivial vector bundle of finite rank $r$ defined on some neighborhood of $\overline\Omega$
        which is Nakano positive on $\Omega$. Define $E$ and $h^E$ as in Theorem \ref{thm(extend):Bern direct image bd domain},  then the following two statements are equivalent:
        \begin{itemize}
          \item[(1)] $\Theta^{(E,h^E)}\equiv 0$ on $U.$
          \item[(2)] $\Theta^{(F,h^F)}\equiv 0$ on $\Omega$ and $\Omega$ is locally trivial.
        \end{itemize}
        \end{thm}
         We remark that in \cite[Theorem 2.17]{Wang17}, the author assume $F$ is flat to get the triviality of $\Omega$ from the flatness 
         of $E$. However, this assumption is redundant, so we only assume $F$ is Nakano positive in Theorem \ref{thm:flatness criterion}. The main idea of the proof of Theorem \ref{thm:flatness criterion} is similar with the proof of \cite[Theorem 2.17]{Wang17} and we will only give a sketch of it in Section \ref{section:criterion}.
        \subsection*{Acknowledgements}
    		The author is grateful to Professor Fusheng Deng, his Ph.D. advisor, for valuable discussions on related topics,
            and Professor Xu Wang for answering some questions about the paper \cite{Wang17}. The author also thanks 
            the referees for their valuable suggestions.
		\section{Preliminaries}\label{sec:preliminary}
		In this section, we fix some notations, conventions and  collect some knowledge that are needed in our discussions.
        \subsection{Notations and conventions}\ \\
        \indent Our convection for $\mathbb{N}$ is that $\mathbb{N}:=\{1,2,3,\cdots\}$.
        For any subset $A\subset\mr^n$, let $\overline{A}$ denote the closure of $A$ in $\mr^n$, and let $\partial A:=\overline{A}\setminus A$.
        Any neighborhood of a subset $A\subset\mr^n$ is open by default. We say $U$ is a domain of $\mr^n$ if it is a connected
        open subset of $\mr^n$. \\
        \indent   For any $k\in\mathbb{N}\cup\{0,\infty\}$.  If $U\subset \mr^n$ is an open subset, then we let $C^k(U)$ be the space of complex valued functions which are of class $C^k$. For any $f\in C^1(U)$, let $\nabla f$ denote its gradient. If $A\subset\mr^n$ is an arbitrary subset, let  $C^k(A)$ denote the space of complex valued
         functions which is of class $C^k$ in an open neighborhood of $A$.\\
        \indent If $U\subset\mc^n$ is a open subset, let $\mo(U)$ denote the space of holomorphic functions in $U$. 
        We recall an upper semi-continuous function $f\colon U\rw [-\infty,\infty)$
        is plurisubharmonic if its restriction to every complex line in $U$ is subharmonic, i.e. satisfying the submean value inequality.
        A real valued function $\varphi\in C^2(U)$ is strictly plurisubharmonic if its complex Hessian $\left(\varphi_{j\bar{k}}\right)_{1\leq j,k\leq n}$ is positive definite for any $z\in U$, where 
        $$\varphi_{j\bar{k}}:=\frac{\partial^2\varphi}{\partial z_j\partial z_k},\ 1\leq j,k\leq n,$$ 
        and we let $\left(\varphi^{j\bar{k}}\right)_{1\leq j,k\leq n}$ be the inverse matrix of its complex Hessian.
        We recall that $U$ is strictly pseudoconvex if it has $C^2$-boundary, and it has a $C^2$-boundary defining function which is strictly plurisubahrmonic in 
        a neighborhood of $\overline{U}$.\\
        \indent Let $d\lambda$ denote the Lebesgue measure on $\mr^n$. Note that all measurable sets, measurable maps, measurable differential forms
        will be Lebesgue measurable.  If an open subset $U\subset \mr^n$ has $C^1$-boundary, let $dS$ denote the surface measure on $\partial U$.\\
         \indent For any open subset $U\subset\mr^n$, let $L^2(U)$ denote the set of $L^2$-integrable complex valued functions. For any $f\in L^2(U)$, let $\|f\|_{L^2(U)}$ denote the $L^2$-norm of $f$.
         \\
         \indent  A metric on a vector bundle will be of class $C^2$, all sections of a vector bundle will be global unless stated otherwise. Let $(E,h^E)$ be a  Hermitian holomorphic vector bundle of finite rank $r$  over an open subset $U\subset\mc^n$, and let  $e_1,\cdots,e_r$ be a local  holomorphic frame of $E$.  For any $x\in U$, let $h^E_x$ denote the inner product on the fiber $E_x$ of $x$. The metric $h^E$ will always be complex linear with respect to its first entry.  For any $1\leq \alpha,\beta\leq r,$ set 
         $$h^E_{\alpha\bar{\beta}}:=h^E(e_\alpha,e_\beta),$$
         and let $\left((h^E)^{\alpha\bar{\beta}}\right)_{1\leq\alpha,\beta\leq n}$ be the inverse matrix of 
         $\left(h^E_{\alpha\bar{\beta}}\right)_{1\leq \alpha,\beta\leq r}.$
         Let $u,v$ be any measurable sections of $E$,  write 
         $$u=\sum_{\lambda=1}^ru_\lambda e_\lambda,\ v=\sum_{\mu=1}^rv_\mu e_{\mu},$$
         then we let 
         $$|u|^2:=\sum_{\lambda=1}^r|u_\lambda|^2,\ \langle u,v\rangle_{h^E}:=h^E(u,v)=\sum_{\lambda,\mu=1}^ru_\lambda\overline{v_\mu}h^E(e_\lambda,e_\mu),$$
         $$|u|_{h}^2:=\langle u,u\rangle_{h},\ (u,v)_{h}:=\int_{U}\langle u,v\rangle_{h^E} d\lambda,\ \|u\|_{h^E}^2:=(u,u)_{h^E}.$$
         Let $L^2(U,E)$ be the space of measurable $E$-valued sections which are $L^2$-integrable, i.e.
         $$L^2(U,E):=\{u\text{ is a measurable } E\text{-valued section of } E|\ \|u\|_{h^E}^2<\infty\}.$$
         For any $k\in\mathbb{N}\cup\{0,\infty\}$, let $C^k(U,E)$ be the space of $E$-valued sections which are of class $C^k$.
		\subsection{Curvature positivity of Hermitian holomorphic vector bundles}\

		\indent Let $U\subset \mc_t^n$ be a domain, and let $(E,h^E)$ be a Hermitian holomorphic trivial vector bundle over $U$ of finite rank $r$.
		The Chern connection is given by a collection of differential operators $\{D_{t_j}^E\}_{1\leq j\leq n}$, which satisfies 
		$$
        \partial_{t_j}h^E(u, v) = h^E(D_{t_j}^E u, v) + h^E(u, \bar{\partial}_{t_j} v),\ \forall\ u,v \in C^2(U,E),
        $$
		where $\partial_{t_j}:= \frac{\partial}{\partial t_j}$ and $\bar{\partial}_{t_j}:= \frac{\partial}{\partial \bar{t}_j}$
        for any $1\leq j\leq n.$ For any $1\leq j,k\leq n,$ let $\Theta^{(E,h^E)}_{jk}:=[D_{t_j}^E, \bar{\partial}_{t_k}]$ (Lie bracket), then the Chern curvature of 
       $(E,h^E)$ is given by 
		$$
         \Theta^{(E,h^E)} =\sum_{j,k=1}^n \Theta^{(E,h^E)}_{jk} dt_j \wedge d\bar{t}_k.
        $$
		\begin{defn}\label{Curvature positivity of Berndtsson}
			The curvature of $(E,h^E)$ is said to be positive (resp. strictly positive) in the sense of Nakano if for any nonzero $n$-tuple $(u_1, \cdots  , u_n)\\
           \in C^\infty(U,E),$
            we have 
			$$
            \sum_{j,k=1}^n h^E\left(\Theta_{jk}^{(E,h^E)} u_j, u_k\right) \geq 0\ (\text{resp.} >0).
            $$
		\end{defn}
          \section{Curvature formulas of direct image bundles}\label{section:curvature formula}\
          In this section, we will construct curvature formulas for direct image bundles. To do this, let us fix some notations 
          and introduce some basic concepts.\\
          \indent Let $\Omega\subset U\times\mc^m$ be a family of bounded domains which has a plurisubharmonic defining function over a domain $U\subset\mc^n,$
             let $\rho$ be a $C^4$-boundary defining function of it, and let $(F,h^F)$ be a Hermitian holomorphic trivial vector bundle of finite rank $r$ defined on some neighborhood of $\overline\Omega$. Let $j,k=1,\cdots, n$ represent the indices of the components of $t=(t_1,\cdots, t_n)$,
            $p,q,s,a,b,c=1, \cdots, m$ represent the indices of the components of  $z=(z_1,\cdots, z_m)$, $\lambda,\mu,\alpha,\beta,\gamma=1,\cdots, r$ represent the indices of the components of $F$,
            and let $\{e_1,\cdots,e_r\}$ be the canonical holomorphic frame of $F$. For any $t\in U$, set
    		$$E_t:=\{u:=\sum_{\lambda} u_\lambda dz_1\wedge\cdots\wedge dz_m\otimes e_\lambda|\ u_\lambda \in \mo(\Omega_t)\cap L^2(\Omega_t)\text{ for all }\lambda\}$$
    		with an inner product $h_t^E$ given by
    		$$h_t^E(u,v):=\int_{\Omega_t} \sum_{\lambda,\mu}u_\lambda(z)\overline{v_\mu(z)}h^F_{(t,z)}(e_\lambda,e_\mu)d\lambda(z),\ \forall u,v \in E_t.$$
    		We set $E:=\cup_{t\in U}E_t$, and view it as a Hermitian holomorphic (trivial) vector bundle over $U$
           in some sense (which we don't pursue in this direction). To simplify the notation, we always write $dz$ for $dz_1\wedge\cdots\wedge dz_m$.
           \begin{defn}
            We call $u\colon U\rw E,\ t\rwo u(t):=\sum_{\lambda=1}^r u_\lambda(t,z)dz\otimes e_\lambda\in E_t$ a $C^k$-section ($0\leq k\leq \infty$) of $E$ if 
            $u_\lambda\in C^k(\overline{\Omega},\mc)$ for any $1\leq \lambda\leq r$. Let $C^k(U,E)$ denote the space of all $C^k$-sections of $E$. 
           \end{defn}
           \indent Note that any section of $E$ can be naturally regarded as a section of $F.$\\
            \indent To give the definition of Chern curvature, let us recall some knowledge that have been used in \cite{Wang17}.
            For a $C^1$ function $\varphi$ on $\Omega$, we set 
            $$\varphi_j:=\frac{\partial\varphi}{\partial t_j},\ \varphi_{\bar j}:=\frac{\partial\varphi}{\partial \bar{t}_j},\ \varphi_p:=\frac{\partial\varphi}{\partial z_p},
            \ \varphi_{\bar p}:=\frac{\partial\varphi}{\partial \bar{z}_p}.$$
            We moreover set 
             $$
            \rho^{p}:=\sum_q\rho^{p\bar{q}}\rho_{q},\ \rho^{\bar{p}}:=\overline{\rho^{p}},\ |\partial\rho|^2:=\sum_{p}\rho_{\bar p}\rho^p,$$
            $$\nu_j^p:=\frac{\rho_j\rho^{\bar p}}{\rho-|\partial\rho|^2}-\sum_q\left(\rho_{j\bar{q}}\rho^{q\bar{p}}+\frac{\rho_{j\bar{q}}\rho^{q}\rho^{\bar p}}{\rho-|\partial\rho|^2}\right),  $$
            $$
            V_j:=\frac{\partial}{\partial t_j}+\sum_p \nu_j^p\frac{\partial}{\partial z_p},\ V_{\bar{j}}:=\overline{V_{j}}.
            $$
            It is clear  that $\nu_j^p\in C^2(\overline\Omega)$. A direct computation shows that  $
            V_j(\rho)|_{\partial\Omega}=0.$ By Corollary A.4 and Lemma 2.9 (and its proof) of \cite{Wang17}, we have
            \begin{lem}\label{lem:derivative}
            Let $f\in C^1(\overline \Omega)$, then for any $t_0\in U$, there is an open neighborhood $V\subset U$ of $t_0$ such that, for any $t\in V$, we have 
            $$
            \frac{\partial}{\partial t_j}\int_{\Omega_t}f(t,z)d\lambda(z)=\int_{\Omega_t}\left(\frac{\partial f}{\partial t_j}(t,z)+
            \sum_p\frac{\partial}{\partial z_p}(\nu_j^p f)(t,z)\right)d\lambda(z).
            $$
            \end{lem}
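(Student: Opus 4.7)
My plan is to combine a change-of-variables argument with the divergence theorem, exploiting the key structural property that the $(1,0)$-vector field $V_j=\partial_{t_j}+\sum_p \nu_j^p \partial_{z_p}$ is tangent to $\partial\Omega$ (i.e.\ $V_j(\rho)=0$ on $\partial\Omega$), which is exactly how the coefficients $\nu_j^p$ are designed.

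First, in a small neighborhood $V$ of $t_0$, I would construct a $C^2$-family of diffeomorphisms $\phi_t\colon \Omega_{t_0}\to\Omega_t$ which also restrict to diffeomorphisms of the boundary. These come from flowing along the real and imaginary parts of $V_j$ and $V_{\bar j}$; the tangency $V_j(\rho)=0$ on $\partial\Omega$ guarantees that the flow preserves $\partial\Omega$, and the hypothesis that $d(\rho|_{\Omega_t})\neq 0$ on $\partial\Omega_t$ ensures the boundary is a genuine $C^4$ submanifold on which short-time flow makes sense. The family is arranged so that its complex derivatives satisfy $\partial_{t_j}\phi_t^p=\nu_j^p\circ\phi_t$ at $t_0$, where $\phi_t^p$ denotes the $p$-th component.

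After the substitution $z=\phi_t(w)$, the integral becomes
$$\int_{\Omega_t} f(t,z)\,d\lambda(z)=\int_{\Omega_{t_0}} f(t,\phi_t(w))\,|J_{\phi_t}(w)|\,d\lambda(w),$$
and since the domain of integration is now fixed, standard Leibniz differentiation applies. The chain rule together with $\partial_{t_j}\phi_t^p=\nu_j^p\circ\phi_t$ gives
$$\partial_{t_j}(f\circ\phi_t)=(\partial_{t_j}f)\circ\phi_t+\sum_p \nu_j^p\,(\partial_{z_p}f)\circ\phi_t,$$
and the variational formula for a Jacobian yields $\partial_{t_j}\log|J_{\phi_t}|=\sum_p(\partial_{z_p}\nu_j^p)\circ\phi_t$. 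Reassembling these two contributions and pushing forward back to $\Omega_t$ produces exactly the integrand $\partial_{t_j}f+\sum_p\partial_{z_p}(\nu_j^p f)$, as claimed.

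The main obstacle is the first step: producing the boundary-respecting diffeomorphism with the required regularity. This relies on $\rho\in C^4$ (so that $\nu_j^p\in C^2$) and on $d(\rho|_{\Omega_t})\neq 0$ on $\partial\Omega_t$, and is essentially the content of Corollary A.4 of \cite{Wang17}, which is why the lemma is just cited there. As an independent consistency check, the complex divergence theorem gives
$$\sum_p\int_{\Omega_t}\partial_{z_p}(\nu_j^p f)\,d\lambda=\int_{\partial\Omega_t} f\Bigl(\sum_p \nu_j^p \rho_p\Bigr)\frac{dS}{|\nabla\rho|},$$
and the boundary identity $\sum_p\nu_j^p\rho_p=-\rho_j$ (which is $V_j(\rho)=0$ restricted to $\partial\Omega$) shows that this matches precisely the boundary contribution one gets from Reynolds-type differentiation of the moving domain $\{\rho(t,\cdot)<0\}$ in the parameter $t_j$; this confirms the formula.
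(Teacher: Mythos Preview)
The paper does not prove this lemma; it simply records it as a consequence of Corollary~A.4 and Lemma~2.9 of \cite{Wang17}. Your flow argument---integrating the real and imaginary parts of $V_j$ to produce a boundary-preserving family of diffeomorphisms $\phi_t\colon\Omega_{t_0}\to\Omega_t$, then differentiating under the fixed integral---is exactly the mechanism behind those cited results, so your proposal is correct and aligned with what the paper invokes. The computations you indicate (that $\partial_{t_j}\phi_t^p=\nu_j^p$ and $\partial_{t_j}\overline{\phi_t^p}=0$ at $t_0$ from the flow construction, and that $\partial_{t_j}\log|J_{\phi_t}|=\sum_p\partial_{z_p}\nu_j^p$ from the divergence of the generating field) are all right.

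One remark: your ``consistency check'' via the complex divergence theorem is in fact a self-contained alternative proof. Once one has the standard Reynolds/Hadamard formula
\[
\partial_{t_j}\int_{\Omega_t}f\,d\lambda=\int_{\Omega_t}\partial_{t_j}f\,d\lambda-\int_{\partial\Omega_t}f\,\rho_j\,\frac{dS}{|\nabla\rho|},
\]
the identity $\sum_p\nu_j^p\rho_p=-\rho_j$ on $\partial\Omega_t$ together with Stokes converts the boundary term into $\sum_p\int_{\Omega_t}\partial_{z_p}(\nu_j^p f)\,d\lambda$. This bypasses the explicit construction of $\phi_t$ and needs only the tangency $V_j(\rho)|_{\partial\Omega}=0$, so it is arguably the more elementary route; the flow construction, on the other hand, is what one needs later anyway to justify that the direct image bundle is locally trivial.
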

              \indent Lemma \ref{lem:derivative} is the starting point for the construction of curvature formulas.\\
             \indent We also recall the definition of $\bar{\partial}$-operator and Chern connection of $E$. For $u\in C^1(U,E)$, write 
             $$
             u:=\sum_{\lambda}u_\lambda dz\otimes e_\lambda,\ \bar{\partial}_{t_j}u:=\frac{\partial u_\lambda}{\partial \bar{t}_j}dz\otimes e_\lambda,
             $$
             then 
             $$
             \bar{\partial}u:=\sum_{j=1}^n d\bar{t}_j\wedge \bar{\partial}_{t_j}u
             $$
             is the $\bar{\partial}$-operator of $E$. 
             \begin{defn}
             The Chern connection on $E$ is a collection of differential operators $\{D_{t_j}^E\}_{1\leq j\leq n}$ on $E$
             such that for each $j$, we have  
    		$$
            \partial_{t_j} h^E(u, v) = h^E(D_{t_j}^E u, v) + h^E(u, \bar{\partial}_{t_j} v),\ \forall\ u,v \in C^1(U,E).
            $$
             \end{defn}
              For any $t\in U$, set 
            $$G_t:=\{u=\sum_{\lambda} u_\lambda dz\otimes e_\lambda|\ u_\lambda \in L^2(\Omega_t)\text{ for all }\lambda\}$$
    		with an inner product $h^G_t$ given by
    		$$h_t^G(u,v):=\int_{\Omega_t} \sum_{\lambda,\mu}u_\lambda(z)\overline{v_\mu(z)}h^F_{(t,z)}(e_\lambda,e_\mu)d\lambda(z),\ \forall\ u,v \in G_t,$$
    		then $G:=\cup_{t\in U}G_t$  can be viewed as a Hermitian holomorphic (trivial) vector bundle over $U.$ Let $\pi\colon G\rw E$ be the fiberwise orthogonal projection, and 
            $\pi_\perp$ be the orthogonal projection on the orthogonal complement of $E$ in $G.$\\
            \indent  Now we prove the existence and uniqueness of the Chern connection. 
             \begin{thm}\label{Chern}
              There exists a unique Chern connection on $E$, and it is (locally) given by 
            $$
             D_{t_j}^Eu=\pi\left(D_{t_j}^Fu+\sum_{p} \nu_j^p D_{z_p}^Fu+\sum_{p}\frac{\partial \nu_{j}^p}{\partial z_p}u\right),\ \forall\ u\in C^1(U,E),
            $$ 
            where if $u=\sum_{\lambda}u_\lambda dz\otimes e_\lambda$, then
            $$
                D_{z_p}^Fu:=\sum_{\lambda}\frac{\partial u_\lambda}{\partial z_p}dz\otimes e_\lambda+\sum_{\lambda}u_\lambda dz\otimes D_{z_p}^Fe_\lambda.
            $$
             \end{thm}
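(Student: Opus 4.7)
The plan is to split the argument into uniqueness and existence. \textbf{Uniqueness} follows immediately from the non-degeneracy of the Hermitian form $h_t^E$ on $E_t$: if two operators $D_{t_j}^E$ and $\tilde D_{t_j}^E$ both take values in $E$ and satisfy the Chern identity, then $w(t) := (D_{t_j}^E - \tilde D_{t_j}^E)u(t)$ lies in $E_t$ and is $h_t^E$-orthogonal to every $v \in E_t$; taking $v = w(t)$ forces $w(t) = 0$. No computation is needed.

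For \textbf{existence}, I would verify directly that the proposed formula satisfies the Chern identity. Fix $u,v \in C^1(U,E)$, write $u = \sum_\lambda u_\lambda \, dz \otimes e_\lambda$ and $v = \sum_\mu v_\mu \, dz \otimes e_\mu$, and set $\langle u,v\rangle_F := \sum_{\lambda,\mu} u_\lambda \overline{v_\mu} h^F_{\lambda\bar\mu}$, so that $h^E(u,v) = \int_{\Omega_t} \langle u,v\rangle_F \, d\lambda$. Applying Lemma \ref{lem:derivative} with $f = \langle u,v\rangle_F$ yields
\[
\partial_{t_j} h^E(u,v) = \int_{\Omega_t} \partial_{t_j}\langle u,v\rangle_F \, d\lambda + \int_{\Omega_t} \sum_p \partial_{z_p}\bigl(\nu_j^p \langle u,v\rangle_F\bigr) d\lambda.
\]
The Chern property of $(F,h^F)$ expands $\partial_{t_j}\langle u,v\rangle_F = \langle D^F_{t_j} u, v\rangle_F + \langle u, \bar\partial_{t_j} v\rangle_F$ and, in the $z_p$-direction, $\partial_{z_p}\langle u,v\rangle_F = \langle D^F_{z_p} u, v\rangle_F + \langle u, \bar\partial_{z_p} v\rangle_F$; the latter $\bar\partial_{z_p}$-terms vanish because $v(t) \in E_t$ is holomorphic in $z$ and the frame $\{e_\mu\}$ is holomorphic. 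After a Leibniz expansion of the factor $\nu_j^p$ inside the divergence, everything combines to
\[
\partial_{t_j} h^E(u,v) = h^E(L_j u, v) + h^E(u, \bar\partial_{t_j} v),
\]
where $L_j u := D^F_{t_j} u + \sum_p \nu_j^p D^F_{z_p} u + \sum_p (\partial \nu_j^p / \partial z_p)u$. Since $v \in E$, inserting $\pi$ is free: $h^E(L_j u, v) = h^E(\pi L_j u, v)$. Hence $D_{t_j}^E := \pi \circ L_j$ satisfies the defining identity and takes values in $E$, giving the claimed local formula.

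The only delicate step is the application of Lemma \ref{lem:derivative}, which already encodes the variation of the boundary through the vector field $V_j = \partial_{t_j} + \sum_p \nu_j^p \partial_{z_p}$ chosen so that $V_j(\rho)|_{\partial\Omega} = 0$. The divergence term it produces is exactly what forces the correction $\sum_p \nu_j^p D^F_{z_p} + \sum_p \partial \nu_j^p/\partial z_p$ beyond the naive $D^F_{t_j}$, and the orthogonal projection $\pi$ is unavoidable because $L_j u$ need not be holomorphic in $z$ even when $u$ is. Beyond this bookkeeping I expect no serious obstacle.
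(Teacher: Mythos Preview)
Your argument is correct and follows essentially the same route as the paper: apply Lemma~\ref{lem:derivative} to differentiate $\int_{\Omega_t} h^F(u,v)\,d\lambda$, expand the integrand via the Chern identity for $(F,h^F)$ in both the $t_j$ and $z_p$ directions (the $\bar\partial_{z_p}v$ terms vanishing since $v$ is fiberwise holomorphic), and read off $D_{t_j}^E u = \pi(L_j u)$. Your write-up is in fact more explicit than the paper's, which compresses the uniqueness and the role of $\pi$ into the single phrase ``since $v$ was arbitrary''.
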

             \begin{proof}
             By Lemma \ref{lem:derivative}, we know 
             \begin{align*}
            &\quad h^E(D_{t_j}^Eu,v)+h^E(u,\frac{\partial v}{\partial\bar{t}_j})\\
            &=\frac{\partial }{\partial t_j}h^E(u,v) \\
            &=\int_{\Omega_{t}}\frac{\partial }{\partial t_j}h^F(u,v)d\lambda+\sum_p\int_{\Omega_t}\frac{\partial}{\partial z_p}(\nu_j^p h^F(u,v))d\lambda \\
            &=\int_{\Omega_t}h^F(D_{t_j}^Fu,v)d\lambda +\int_{\Omega_t}h^F(u,\frac{\partial v}{\partial\bar{t}_j})d\lambda
            +\int_{\Omega_t} h^F(D_{z_p}^F(\nu_j^p u),v)d\lambda .
            \end{align*}
            \end{proof}
             Set $\Theta^{(E,h^E)}_{jk}:=[D_{t_j}^E, \bar{\partial}_{t_k}]$, then the Chern curvature of 
           $(E,h^E)$ is given by 
    		$$
             \Theta^{(E,h^E)} :=\sum_{j,k=1}^n \Theta^{(E,h^E)}_{jk} dt_j \wedge d\bar{t}_k,
            $$
            and then we give the definition of Nakano positivity of $(E,h^E)$, this is similar to Definition \ref{Curvature positivity of Berndtsson}.
            \begin{defn}\label{Curvature positivity 1}
    			The curvature of $(E,h^E)$ is said to be positive (resp. strictly positive) in the sense of Nakano if for any nonzero $n$-tuple $(u_1, \cdots  , u_n)\\
              \in C^\infty(U,E),$
                we have 
    			$$
                \sum_{j,k=1}^n h^E(\Theta_{jk}^{(E,h^E)} u_j, u_k) \geq 0\ (\text{resp.} >0).
                $$
    		\end{defn}
            To compute the Chern curvature of $(E,h^E)$, we adopt the following notations
                $$
                D_{t_j}^Fe_\lambda=\sum_{\mu}\Gamma_{j\lambda}^\mu e_\mu,\ D_{z_p}^Fe_\lambda=\sum_{\mu}\Gamma_{p\lambda}^\mu e_\mu,
                $$
    	$$
    			\left(\begin{array}{cc}
    				(A_{pq\lambda}^\alpha)& (A_{pj\lambda}^\alpha)\\
    				(A_{jp\lambda}^\alpha) & (A_{jk\lambda}^\alpha) 
    			\end{array}\right):=\left(\begin{array}{cc}
    				\left(-\frac{\partial\Gamma_{p\lambda}^\alpha}{\partial\bar{z}_q}\right)& \left(-\frac{\partial\Gamma_{p\lambda}^\alpha}{\partial\bar{t}_j}\right)\\
    				\left(-\frac{\partial\Gamma_{j\lambda}}{\partial\bar{z}_p}\right)& \left(-\frac{\partial\Gamma_{j\lambda}^\alpha}{\partial\bar{t}_k}\right)
    			\end{array}\right),
    	$$
         $$
    			\left(\begin{array}{cc}
    				(A_{pq\lambda\mu})& (A_{pj\lambda\mu})\\
    				(A_{jp\mu\lambda}) & (A_{jk\lambda\mu}) 
    			\end{array}\right):=\left(\begin{array}{cc}
    				\left(-\frac{\partial\Gamma_{p\lambda}^\alpha}{\partial\bar{z}_q}h^F_{\alpha\mu}\right)& \left(-\frac{\partial\Gamma_{p\lambda}^\alpha}{\partial\bar{t}_j}h^F_{\alpha\mu}\right)\\
    				\left(-\frac{\partial\Gamma_{p\lambda}^\alpha}{\partial\bar{t}_j}h^F_{\alpha\mu}\right)^*& \left(-\frac{\partial\Gamma_{j\lambda}^\alpha}{\partial\bar{t}_k}h^F_{\alpha\mu}\right)
    			\end{array}\right),
         $$
                where $^{*}$ represents the conjugate transpose of a matrix. It is clear that 
    			$$\Theta_{jk}^{(F,h^F)}=[D_{t_j}^F,\bar{\partial}_{t_k}]=\sum_{\lambda,\alpha}A_{jk\lambda}^\alpha e^\lambda\otimes e_\alpha,$$
                where $e^1,\cdots,e^n$ is the dual basis of $e_1,\cdots,e_n$. \\
            \indent Now we can state and prove the curvature formula. As said in the introduction, the line bundle case is given in the following, and the general case  is given in the appendix \ref{appendix}.
            \begin{thm}[= Corollary \ref{curvature formula:line bundle}]\label{thm:line bundle}
         Assume $F$ is a trivial line bundle, and $h^{F}=e^{-\varphi}$ for some $\varphi\in C^2(\overline\Omega)$.
         Let $e$ be the canonical holomorphic frame of $F$. For any $t\in U$, set
    	$$E_t:=\{u= udz\otimes e|\ u\in \mo(\Omega_t)\cap L^2(\Omega_t)\}\cong \mo(\Omega_t)\cap L^2(\Omega_t)$$
    	with an inner product $h_t$ given by
    	$$h_t^E(u,v):=\int_{\Omega_t} u(z)\overline{v(z)}e^{-\varphi(t,z)}d\lambda(z),\ \forall u,v \in E_t,$$
        and set $E:=\cup_{t\in U}E_t.$ Then for any  $u_1,\cdots,u_n\in C^2(U,E)$, we have (locally) 
            \begin{align*}
                &\quad h^E(\Theta_{jk}^{(E,h^E)}u_j,u_k)\nonumber\\
                &=\sum_{\lambda,\mu}\int_{\Omega_t}H(\varphi)_{j\bar k}u_{j}\overline{u_{k}}e^{-\varphi}d\lambda-\int_{\Omega_t}\pi_{\perp}(L_ju_j)\overline{\pi_{\perp}(L_ku_k)}
                e^{-\varphi}d\lambda \\
                &\quad+\sum_{p,q}\int_{\Omega_t}\frac{\partial\nu_j^p}{\partial\bar{z}_q}\frac{\partial\overline{\nu_k^q}}{\partial z_p} u_j\overline{u_k}e^{-\varphi}d\lambda+\int_{\partial\Omega_t}H(\rho)_{j\bar{k}}u_j\overline{u_k}e^{-\varphi}\frac{dS}{|\nabla\rho|},\nonumber
               \end{align*}
               where 
               $$
                L_ju_j:=\frac{\partial u_j}{\partial t_j}-u_j\varphi_j+\sum_p \left(\frac{\partial (\nu_j^p u_j)}{\partial z_p}-\nu_j^pu_j\varphi_p\right),
               $$
               $$
                 H(\varphi)_{j\bar k}:=\varphi_{j\bar k}+\sum_p\nu_j^p\varphi_{p\bar k}+\sum_q\overline{\nu_k^q}\varphi_{jq}+\sum_{p,q}\nu_j^p\overline{\nu_k^q}\varphi_{p\bar q},
               $$
               $$
                H(\rho)_{j\bar{k}}:=\left(\rho_{j\bar{k}}-\sum_{p,s}\rho_{j\bar{s}}\rho^{s\bar{p}}\overline{\rho_{k\bar{p}}}\right)
                +\frac{(\rho_j-\sum_s\rho_{j\bar{s}}\rho^s)(\rho_{\bar{k}}-\sum_p\overline{\rho_{k\bar{p}}}\rho^{\bar{p}})}{|\partial\rho|^2}.
               $$
           In particular, the curvature operator $\Theta_{jk}^{(E,h^E)}$ is pointwise defined, i.e. it does not  involve derivatives with respect to $t\in U$.
            \end{thm}
           \begin{proof}
          Fix $t\in U$, and $j,k$. For any $u\in L^2(\Omega_t)$, we always identify it with a section $udz\otimes e$ of $G$
          if there does not cause some confusion.  For an element $u\in L^2(\Omega_t)$, we may give an orthogonal decomposition of $u$ as follows: 
          $$u=v_1+v_2,\ v_1\in \mo(\Omega_t)\cap L^2(\Omega_t),\ \int_{\Omega_t}v_1\overline{v_2}e^{-\varphi}d\lambda=0,$$
          then we know 
          $$\pi(u)=\pi(udz\otimes e)=v_1dz\otimes e=v_1,\ \pi_{\perp}(u)=\pi_{\perp}(udz\otimes e)=v_2dz\otimes e=v_2.$$
          For any $u\in C^1(U,F)$, set  
        $$
         N_j u:=D_{z_p}^F(\nu_j^p u)=\sum_p\nu_j^p\left(\frac{\partial u}{\partial z_p}-u\varphi_p\right)+\sum_p\frac{\partial \nu_{j}^p}{\partial z_p}u.
        $$
        By Lemma \ref{lem:derivative}, we have 
        \begin{align*}
        &\quad \frac{\partial }{\partial\bar{t}_k}\int_{\Omega_t}h^F(L_ju_j,u_k)d\lambda\\ 
        &=\int_{\Omega_t}\frac{\partial }{\partial\bar{t}_k} h^F(L_ju_j,u_k)d\lambda+\sum_q\int_{\Omega_t}\frac{\partial }{\partial \bar{z}_q}(\overline{\nu_k^q}h^F(L_ju_j,u_k))d\lambda \\
        &=\int_{\Omega_t}h^F\left(\frac{\partial}{\partial\bar{t}_k}(L_ju_j),u_k\right)d\lambda+\int_{\Omega_t}h^F(L_ju_j,D_{t_k}^Fu_k)d\lambda \\
        &\quad +\sum_q\int_{\Omega_t}h^F\left(\frac{\partial}{\partial\bar{z}_q}(L_ju_j),\nu_k^q u_k\right)d\lambda+
        \sum_q\int_{\Omega_t}h^F(L_ju_j, D_{z_q}^F(\nu_k^q u_k))d\lambda \\
        &=\int_{\Omega_t}h^F\left(\frac{\partial}{\partial\bar{t}_k}(L_ju_j),u_k\right)d\lambda+\int_{\Omega_t}h^F(L_ju_j,L_ku_k)d\lambda \\
        &\quad +\sum_q\int_{\Omega_t}h^F\left(\frac{\partial}{\partial\bar{z}_q}(L_ju_j),\nu_k^q u_k\right)d\lambda ,
        \end{align*}
        so we get 
         \begin{align*}
       &\quad h^E\left(\Theta_{jk}^{(E,h^E)}u_j,u_k\right)\\ 
       &=h^E\left(D_{t_j}^E\left(\frac{\partial u_j}{\partial\bar{t}_k}\right), u_k\right)-h^E\left(\frac{\partial}{\partial\bar{t}_k}(D_{t_j}^Eu_j),u_k\right) \\
       &=h^E\left(D_{t_j}^E\left(\frac{\partial u_j}{\partial\bar{t}_k}\right), u_k\right)-\frac{\partial }{\partial\bar{t}_k}h^E(D_{t_j}^Eu_j,u_k)
        +h^E(D_{t_j}^Eu_j,D_{t_k}^Eu_k)   \\
        &=h^E\left(\pi\left(L_j\left(\frac{\partial u_j}{\partial\bar{t}_k}\right)\right), u_k\right)-\frac{\partial }{\partial\bar{t}_k}h^E(\pi(L_ju_j),u_k)+h^E(D_{t_j}^Eu_j,D_{t_k}^Eu_k)   \\
        &=\int_{\Omega_t}h^F\left(L_j\left(\frac{\partial u_j}{\partial\bar{t}_k}\right),u_k\right)d\lambda-\frac{\partial }{\partial\bar{t}_k}\int_{\Omega_t}h^F(L_ju_j,u_k)d\lambda+h^E(D_{t_j}^Eu_j,D_{t_k}^Eu_k)   \\
        &=\int_{\Omega_t}h^F\left([L_j,\frac{\partial}{\partial\bar{t}_k}]u_j,u_k\right)d\lambda+h^E(D_{t_j}^Eu_j,D_{t_k}^Eu_k)   \\
        &\quad -\int_{\Omega_t}h^F(L_ju_j,L_ku_k)d\lambda-\sum_q\int_ {\Omega_t}h^F\left(\frac{\partial}{\partial\bar{z}_q}(L_ju_j),\nu_k^q u_k\right)d\lambda   \\
        &=\int_{\Omega_t}h^F\left(\Theta_{jk}^{(F,h^F)}u_j,u_k\right)d\lambda-\int_{\Omega_t}h^F(\pi_{\perp}(L_ju_j),\pi_{\perp}(L_ku_k))d\lambda   \\
         &\quad +\int_{\Omega_t}h^F\left([N_j,\frac{\partial}{\partial\bar{t}_k}]u_j,u_k\right)d\lambda-\sum_q\int_{\Omega_t}h^F\left(\frac{\partial}{\partial\bar{z}_q}(L_ju_j),\nu_k^q u_k\right)d\lambda  \\
        &=\int_{\Omega_t}\varphi_{jk}u_{j}\overline{u_{k}}d\lambda-\int_{\Omega_t}h^F(\pi_{\perp}(L_ju_j),\pi_{\perp}(L_ku_k))d\lambda   \\
        &\quad +\int_{\Omega_t}h^F\left([N_j,\frac{\partial}{\partial\bar{t}_k}]u_j,u_k\right)d\lambda-\sum_q\int_{\Omega_t}h^F(\frac{\partial}{\partial\bar{z}_q}(L_ju_j),\nu_k^q u_k)d\lambda.
       \end{align*}
        The proof of Theorem \ref{thm:line bundle} will be completed if we have the following two claims.\\
       {\bf Claim 1:} For the last two terms of the above equality, we have 
       \begin{align*}
        &\quad h^F([N_j,\frac{\partial}{\partial\bar{t}_k}]u_j,u_k)-\sum_qh^F(\frac{\partial}{\partial\bar{z}_q}(L_ju_j),\nu_k^q u_k)\\
        &=\left(\sum_p\nu_j^p\varphi_{p\bar k}+\sum_q\overline{\nu_k^q}\varphi_{j\bar q}+\sum_{p,q}\nu_j^p\overline{\nu_k^q}\varphi_{p\bar q}\right)
        u_j\overline{u_k}e^{-\varphi}\\
        &\quad -\sum_p\frac{\partial}{\partial z_p}\left(\left(\frac{\partial \nu_j^p}{\partial\bar{t}_k}+\sum_q\overline{\nu_k^q}\frac{\partial\nu_j^p}{\partial\bar{z}_q}\right)
        u_j\overline{u_k}e^{-\varphi}\right)
        +\sum_{p,q}\frac{\partial\nu_j^p}{\partial\bar{z}_q}\frac{\partial\overline{\nu_k^q}}{\partial z_p}u_j\overline{u_k}e^{-\varphi}.
        \end{align*}
        {\bf Claim 2:} We moreover have 
        $$
        \sum_p\int_{\Omega_t}\frac{\partial}{\partial z_p}\left(\left(\frac{\partial \nu_j^p}{\partial\bar{t}_k}+\sum_q\overline{\nu_k^q}\frac{\partial\nu_j^p}{\partial\bar{z}_q}\right)u_j\overline{u_k}e^{-\varphi}\right)d\lambda
        =\int_{\partial\Omega_t}H(\rho)_{j\bar k}dS.
        $$
        {\bf The proof of Claim 1:} Note that  
        \begin{align*}
            &\quad [N_j,\frac{\partial}{\partial\bar{t}_k}]u_j\\
            &=\sum_p\nu_j^p\left(\frac{\partial^2 u_j}{\partial \bar{t}_k\partial z_p}-\frac{\partial u_j}{\partial t_k}\varphi_p\right)+\sum_p\frac{\partial\nu_j^p}{\partial z_p}
            \frac{\partial u_j}{\partial \bar{t}_k}\\
            &\quad -\sum_p\frac{\partial}{\partial \bar{t}_k}\left(\nu_j^p\left(\frac{\partial u_j}{\partial z_p}-u_j\varphi_p\right)\right)
            -\sum_p\frac{\partial}{\partial \bar{t}_k}\left(\frac{\partial\nu_j^p}{\partial z_p}u_j\right)\\
            &=\sum_pu_j\frac{\partial(\nu_j^p\varphi_p)}{\partial\bar{t}_k}-\sum_p\frac{\partial\nu_j^p}{\partial\bar{t}_k}\frac{\partial u_j}{\partial z_p}
            -\sum_p\frac{\partial^2\nu_j^p}{\partial\bar{t}_k\partial z_p}u_j,
        \end{align*}
        then we get 
        \begin{equation*}
        h^F([N_j,\frac{\partial}{\partial\bar{t}_k}]u_j,u_k)=
        \sum_p\left(\frac{\partial(\nu_j^p\varphi_p)}{\partial\bar{t}_k}
            -\frac{\partial^2\nu_j^p}{\partial\bar{t}_k\partial z_p}\right)u_j\overline{u_k}e^{-\varphi}
            -\sum_p\frac{\partial\nu_j^p}{\partial\bar{t}_k}\frac{\partial u_j}{\partial z_p}\overline{u_k}e^{-\varphi}.
        \end{equation*}
        We also know 
         \begin{align*}
        \frac{\partial}{\partial\bar{z}_q}(L_ju_j)&=\frac{\partial}{\partial \bar{z}_q}\left(\frac{\partial u_j}{\partial t_j}-u_j\varphi_j+\sum_p \left(\frac{\partial (\nu_j^p u_j)}{\partial z_p}-\nu_j^pu_j\varphi_p\right)\right)\\
        &=-u_j\varphi_{j\bar q}+\sum_p\frac{\partial}{\partial z_p}\left(\frac{\partial\nu_j^p}{\partial\bar{z}_q}u_j\right)-
        \sum_p\frac{\partial(\nu_j^p\varphi_p)}{\partial\bar{z}_q}u_j,
        \end{align*}
        then we have 
        \begin{align*}
         &\quad \sum_q h^F\left(\frac{\partial}{\partial\bar{z}_q}(L_ju_j),\nu_k^q u_k\right)\\
         &=\sum_{p,q}\frac{\partial}{\partial z_p}\left(\frac{\partial\nu_j^p}{\partial\bar{z}_q}u_j\right)\overline{\nu_k^qu_k}e^{-\varphi}
         -\sum_{q}\left(\varphi_{j\bar q}+\sum_p\frac{\partial(\nu_j^p\varphi_p)}{\partial\bar{z}_q}\right)u_j\overline{\nu_k^qu_k}e^{-\varphi}.
        \end{align*}
         Thus, we have
        \begin{align*}
        &\quad h^F([N_j,\frac{\partial}{\partial\bar{t}_k}]u_j,u_k)-\sum_qh^F(\frac{\partial}{\partial\bar{z}_q}(L_ju_j),\nu_k^q u_k) \\
        &=\sum_p\left(\frac{\partial(\nu_j^p\varphi_p)}{\partial\bar{t}_k}
            -\frac{\partial^2\nu_j^p}{\partial\bar{t}_k\partial z_p}-\sum_q\frac{\partial^2\nu_j^p}{\partial z_p\partial\bar{z}_q}
            \overline{\nu_k^q}\right)u_j\overline{u_k}e^{-\varphi} \\
        &\quad +\sum_q\left(\varphi_{j\bar q}+\sum_p\frac{\partial(\nu_j^p\varphi_p)}{\partial\bar{z}_q}\right)
            u_j\overline{\nu_k^qu_k}e^{-\varphi} \\
        &\quad-\sum_p\frac{\partial\nu_j^p}{\partial\bar{t}_k}\frac{\partial u_j}{\partial z_p}\overline{u_k}e^{-\varphi}
        -\sum_{p,q}\frac{\partial\nu_j^p}{\partial\bar{z}_q}\frac{\partial u_j}{\partial z_p}\overline{\nu_k^q u_k}e^{-\varphi} \\
        &=\left(\sum_p\nu_j^p\varphi_{p\bar k}+\sum_q\overline{\nu_k^q}\varphi_{j\bar q}+\sum_{p,q}\nu_j^p\overline{\nu_k^q}\varphi_{p\bar q}\right)
        u_j\overline{u_k}e^{-\varphi} \\
        &\quad +\sum_p\left(\frac{\partial\nu_j^p}{\partial\bar{t}_k}\varphi_p+\sum_q\frac{\partial\nu_j^p}{\partial\bar{z}_q}\varphi_p\overline{\nu_k^q}
        -\frac{\partial^2\nu_j^p}{\partial \bar{t}_k\partial z_p}-\sum_q\frac{\partial^2\nu_j^p}{\partial z_p\partial\bar{z}_q}\overline{\nu_k^q}\right)
        u_j\overline{u_k}e^{-\varphi} \\
        &\quad-\sum_p\frac{\partial\nu_j^p}{\partial\bar{t}_k}\frac{\partial u_j}{\partial z_p}\overline{u_k}e^{-\varphi}
        -\sum_{p,q}\frac{\partial\nu_j^p}{\partial\bar{z}_q}\frac{\partial u_j}{\partial z_p}\overline{\nu_k^q u_k}e^{-\varphi} \\
        &=\left(\sum_p\nu_j^p\varphi_{p\bar k}+\sum_q\overline{\nu_k^q}\varphi_{j\bar q}+\sum_{p,q}\nu_j^p\overline{\nu_k^q}\varphi_{p\bar q}\right)
        u_j\overline{u_k}e^{-\varphi}\\
        &\quad +\sum_p\left(\frac{\partial\nu_j^p}{\partial\bar{t}_k}+\sum_q\frac{\partial\nu_j^p}{\partial\bar{z}_q}\overline{\nu_k^q}\right)
        \frac{\partial\varphi}{\partial z_p}u_j\overline{u_k}e^{-\varphi} \\
        &\quad  -\sum_p\left(\frac{\partial^2\nu_j^p}{\partial \bar{t}_k\partial z_p}+\sum_q\frac{\partial^2\nu_j^p}{\partial z_p\partial\bar{z}_q}\overline{\nu_k^q}
        +\sum_{q}\frac{\partial\nu_j^p}{\partial\bar{z}_q}\frac{\partial\overline{\nu_k^q}}{\partial z_p}\right)
        u_j\overline{u_k}e^{-\varphi} \\
        &\quad-\left(\sum_p\frac{\partial\nu_j^p}{\partial\bar{t}_k}
        +\sum_{p,q}\frac{\partial\nu_j^p}{\partial\bar{z}_q}\overline{\nu_k^q}\right)\frac{\partial u_j}{\partial z_p}\overline{u_k}e^{-\varphi}
        +\sum_{p,q}\frac{\partial\nu_j^p}{\partial\bar{z}_q}\frac{\partial\overline{\nu_k^q}}{\partial z_p}u_j\overline{u_k}e^{-\varphi} \\
        &=\left(\sum_p\nu_j^p\varphi_{p\bar k}+\sum_q\overline{\nu_k^q}\varphi_{j\bar q}+\sum_{p,q}\nu_j^p\overline{\nu_k^q}\varphi_{p\bar q}\right)
        u_j\overline{u_k}e^{-\varphi} \\
        &\quad -\sum_p\frac{\partial}{\partial z_p}\left(\left(\frac{\partial \nu_j^p}{\partial\bar{t}_k}+\sum_q\overline{\nu_k^q}\frac{\partial\nu_j^p}{\partial\bar{z}_q}\right)
        u_j\overline{u_k}e^{-\varphi}\right)
        +\sum_{p,q}\frac{\partial\nu_j^p}{\partial\bar{z}_q}\frac{\partial\overline{\nu_k^q}}{\partial z_p}u_j\overline{u_k}e^{-\varphi}. 
        \end{align*}
        The proof of {\bf Claim 1} is complete.\\
        {\bf The proof of Claim 2:} We know $V_{\bar k}(V_j(\rho))|_{\partial\Omega_t}=0$. In fact,
        \begin{align*}
         &\quad V_{\bar k}(V_j(\rho))|_{\partial\Omega_t}\\
         &=V_{\bar{k}}\left.\left(\rho_j+\sum_p\left(\frac{\rho_j\rho^{\bar p}}{\rho-|\partial\rho|^2}-\sum_q\left(\rho_{j\bar q}\rho^{q\bar p}+\frac{\rho_{j\bar q}\rho^q\rho^{\bar p}}{\rho-|\partial\rho|^2}\right)
         \right)\rho_p\right)\right|_{\partial\Omega_t} \\
        &=V_{\bar{k}}\left.\left(\frac{\rho\rho_j}{\rho-|\partial\rho|^2}-\sum_q\frac{\rho\rho_{j\bar q}\rho^q}{\rho-|\partial\rho|^2}\right)\right|_{\partial\Omega_t} \\
        &=\left(-\frac{\rho_j}{|\partial\rho|^2}+\sum_q\frac{\rho_{j\bar q}\rho^q}{|\partial\rho|^2}\right)V_{\bar{k}}(\rho)|_{\partial\Omega_t} \\
        &=0.
        \end{align*} 
        By the Stokes theorem, we get
        \begin{align*}
        &\quad -\sum_p\int_{\Omega_t}\frac{\partial}{\partial z_p}\left(\left(\frac{\partial \nu_j^p}{\partial\bar{t}_k}+\sum_q\overline{\nu_k^q}\frac{\partial\nu_j^p}{\partial\bar{z}_q}\right)u_j\overline{u_k}e^{-\varphi}\right)d\lambda\\  
        &=-\sum_p\int_{\partial\Omega_t}\left(\frac{\partial \nu_j^p}{\partial\bar{t}_k}+\sum_q\overline{\nu_k^q}\frac{\partial\nu_j^p}{\partial\bar{z}_q}\right)
        \rho_p u_j\overline{u_k}e^{-\varphi}\frac{dS}{|\nabla\rho|}  \\
        &=\int_{\partial\Omega_t}\left(V_{\bar k}(V_j(\rho))-\sum_p\left(\frac{\partial \nu_j^p}{\partial\bar{t}_k}+\sum_q\overline{\nu_k^q}\frac{\partial\nu_j^p}{\partial\bar{z}_q}
        \right)\rho_p\right)u_j\overline{u_k}e^{-\varphi}\frac{dS}{|\nabla\rho|}  \\
        &=\int_{\partial\Omega_t}\left(\rho_{j\bar{k}}+\sum_p\nu_j^p\overline{\rho_{k\bar p}}+\sum_q\overline{\nu_k^q}\rho_{j\bar{q}}+\sum_{p,q}\nu_j^p\overline{\nu_k^q}\rho_{p\bar q}\right)u_j\overline{u_k}e^{-\varphi}\frac{dS}{|\nabla\rho|}. 
        \end{align*} 
        On $\partial\Omega_t$, we have 
        $$
        \nu_j^p=-\frac{\rho_j\rho^{\bar{p}}}{|\partial\rho|^2}-\sum_s\left(\rho_{j\bar s}\rho^{s\bar p}-\frac{\rho_{j\bar s}\rho^{s}\rho^{\bar{p}}}{|\partial\rho|^2}\right),
        $$
        then we know 
        $$
        \sum_p \nu_j^p\overline{\rho_{k\bar p}}=-\sum_p\frac{\rho_j\rho^{\bar{p}}\overline{\rho_{k\bar p}}}{|\partial\rho|^2}
        -\sum_{p,s}\left(\rho_{j\bar s}\rho^{s\bar p}\overline{\rho_{k\bar p}}-\frac{\rho_{j\bar s}\rho^{s}\rho^{\bar{p}}\overline{\rho_{k\bar p}}}{|\partial\rho|^2}\right),
        $$
        $$
        \sum_q\overline{\nu_k^q}\rho_{j\bar{q}}=
        -\sum_q\frac{\rho_{\bar k}\rho^{q}\rho_{j\bar q}}{|\partial\rho|^2}-\sum_a\left(\overline{\rho_{k\bar a}}\overline{\rho^{a\bar q}}\rho_{j\bar{q}}-\frac{\overline{\rho_{k\bar a}}\rho^{\bar a}\rho^{q}\rho_{j\bar q}}{|\partial\rho|^2}\right),
        $$
        and 
        \begin{align*}
        &\quad \sum_{p,q}\nu_j^p\overline{\nu_k^q}\rho_{p\bar q}\\
        &=-\sum_{q}\overline{\nu_k^q}\left(\frac{\rho_j\rho_{\bar{q}}}{|\partial\rho|^2}+\rho_{j\bar q}-\sum_s\frac{\rho_{j\bar s}\rho^{s}\rho_{\bar{q}}}{|\partial\rho|^2}\right) \\
        &=\sum_{q}\left(\frac{\rho_{\bar k}\rho^{q}}{|\partial\rho|^2}+\sum_a\left(\overline{\rho_{k\bar a}}\overline{\rho^{a\bar q}}-\frac{\overline{\rho_{k\bar a}}\rho^{\bar a}\rho^{q}}{|\partial\rho|^2}\right)\right)\left(\frac{\rho_j\rho_{\bar{q}}}{|\partial\rho|^2}+\rho_{j\bar q}-\sum_s\frac{\rho_{j\bar s}\rho^{s}\rho_{\bar{q}}}{|\partial\rho|^2}\right) \\
        &=\frac{\rho_j\rho_{\bar k}}{|\partial\rho|^2}+\sum_q\frac{\rho_{\bar k}\rho^{q}\rho_{j\bar q}}{|\partial\rho|^2}-\sum_s\frac{\rho_{j\bar s}\rho^s\rho_{\bar k}}{|\partial\rho|^2}
        +\sum_a\frac{\rho_j\overline{\rho_{k\bar{a}}}\rho^{\bar{a}}}{|\partial\rho|^2}+\sum_a\overline{\rho_{k\bar a}}\overline{\rho^{a\bar q}}\rho_{j\bar{q}} \\
        &\quad -\sum_{s,a}\frac{\rho_{j\bar s}\rho^s\overline{\rho_{k\bar a}}\rho^{\bar a}}{|\partial\rho|^2}-\sum_a\frac{\rho_j\overline{\rho_{k\bar{a}}}\rho^{\bar a}}{|\partial\rho|^2}
        -\sum_{q,a}\frac{\rho_{j\bar q}\rho^q\overline{\rho_{k\bar a}}\rho^{\bar a}}{|\partial\rho|^2}+\sum_{s,a}\frac{\rho_{j\bar s}\rho^s\overline{\rho_{k\bar a}}\rho^{\bar a}}{|\partial\rho|^2}
         \\
        &=\frac{\rho_j\rho_{\bar k}}{|\partial\rho|^2}+\sum_a\overline{\rho_{k\bar a}}\overline{\rho^{a\bar q}}\rho_{j\bar{q}}-\sum_{s,a}\frac{\rho_{j\bar s}\rho^s\overline{\rho_{k\bar a}}\rho^{\bar a}}{|\partial\rho|^2}
         .
        \end{align*}
        Hence, on $\partial\Omega_t$, we have
        \begin{align*}
        &\quad \rho_{j\bar{k}}+\sum_p\nu_j^p\overline{\rho_{k\bar p}}+\sum_q\overline{\nu_k^q}\rho_{j\bar{q}}+\sum_{p,q}\nu_j^p\overline{\nu_k^q}\rho_{p\bar q}\label{1chern:6}\\ 
        &=\rho_{j\bar{k}}+\frac{\rho_j\rho_{\bar k}}{|\partial\rho|^2}-\sum_p\frac{\rho_j\rho^{\bar{p}}\overline{\rho_{k\bar p}}}{|\partial\rho|^2}
        -\sum_{p,s}\rho_{j\bar s}\rho^{s\bar p}\overline{\rho_{k\bar p}}+\sum_{p,s}\frac{\rho_{j\bar s}\rho^{s}\rho^{\bar{p}}\overline{\rho_{k\bar p}}}{|\partial\rho|^2}
        -\sum_q\frac{\rho_{\bar k}\rho^{q}\rho_{j\bar q}}{|\partial\rho|^2}  \\
        &=\left(\rho_{j\bar{k}}-\sum_{p,s}\rho_{j\bar{s}}\rho^{s\bar{p}}\overline{\rho_{k\bar{p}}}\right)
        +\frac{(\rho_j-\sum_s\rho_{j\bar{s}}\rho^s)(\rho_{\bar{k}}-\overline{\rho_{k\bar{p}}}\rho^{\bar{p}})}{|\partial\rho|^2} . 
        \end{align*}
        The proof of {\bf Claim 2} is complete, and therefore we completes the proof of Theorem \ref{thm:line bundle}.
           \end{proof}
          \section{The proof of Theorem \ref{thm(extend):Bern direct image bd domain}}
            Let us first give a lemma that will be used in the proof of Theorem \ref{thm(extend):Bern direct image bd domain}. Its proof is also given in the appendix \ref{appendix}.
            \begin{lem}\label{lem:strictly positive vector bundle}
            If $F$ is  Nakano strictly positive on $\Omega$, then for any  $u_1,\cdots,u_n\in C^2(U,E)$, we have (locally)
            \begin{align*}
             &\quad \sum_{j,k,\lambda,\mu}\int_{\Omega_t}H(h^F)_{jk\lambda\mu}u_{j\lambda}\overline{u_{k\mu}}d\lambda-\sum_{j,k}\int_{\Omega_t}h^F(\pi_{\perp}(L_ju_j),\pi_{\perp}(L_ku_k))d\lambda\\
                &\quad+\sum_{j,k,p,q}\int_{\Omega_t}\frac{\partial\nu_j^p}{\partial\bar{z}_q}\frac{\partial\overline{\nu_k^q}}{\partial z_p}h^F(u_j,u_k)d\lambda \\
              &\geq  \int_{\Omega_t} \sum_{j,k,\beta,\gamma}(A_{jk\beta\gamma}-\sum_{p,q,\lambda,\mu}B_{pq\lambda\mu}A_{jq\beta\mu}A_{pk\lambda\gamma})u_{j\beta}\overline{u_{k\gamma}} d\lambda
               ,
			\end{align*} 
				where $B$ is uniquely determined by $A$ and satisfies
				$$
                \sum_{q,\beta}A_{pq\lambda\beta}B_{sq\alpha\beta}=\delta_{ps}\delta_{\lambda\alpha}.
                 $$
            \end{lem}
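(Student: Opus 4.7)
My plan is to derive the inequality pointwise in $z$ from the Nakano strict positivity of $F$, combined with the classical Schur complement inequality for positive-definite block Hermitian matrices, and then integrate over $\Omega_t$. Nakano strict positivity of $F$ means that at each point of $\Omega$ the block Hermitian matrix
$$\mathcal{A} := \begin{pmatrix}(A_{jk\lambda\mu}) & (A_{jq\lambda\mu}) \\ (A_{pk\lambda\mu}) & (A_{pq\lambda\mu})\end{pmatrix}$$
is positive definite. Its ``$(z,z)$-block'' $(A_{pq\lambda\mu})$ is therefore invertible, so $B$ in the statement is uniquely determined, and by the Schur complement lemma the matrix $S_{jk\beta\gamma}:=A_{jk\beta\gamma}-\sum_{p,q,\lambda,\mu}B_{pq\lambda\mu}A_{jq\beta\mu}A_{pk\lambda\gamma}$ is positive semidefinite. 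Equivalently, for any auxiliary vectors $\xi_q^\mu\in F$, the extended quadratic form
$$\mathcal{Q}(u,\xi):=\sum A_{jk\lambda\mu}u_{j\lambda}\overline{u_{k\mu}}+2\operatorname{Re}\sum A_{jq\lambda\mu}u_{j\lambda}\overline{\xi_q^\mu}+\sum A_{pq\lambda\mu}\xi_p^\lambda\overline{\xi_q^\mu}$$
is nonnegative, with $\inf_\xi \mathcal{Q}(u,\xi)=\sum S_{jk\beta\gamma}u_{j\beta}\overline{u_{k\gamma}}$, the minimum being attained at the $\xi$ dictated by $B$.

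The next step is to recognise the left-hand side of the lemma as $\int_{\Omega_t}\mathcal{Q}(u,\xi_\ast)\,d\lambda$ for a well-chosen test $\xi_\ast$. The natural first guess is the ``horizontal-lift'' choice $\xi_{\ast,q}^{\mu}:=\sum_k\overline{\nu_k^q}\,u_{k\mu}$, which by construction reproduces the $H(h^F)$ integrand, since
$$H(h^F)_{jk\lambda\mu}=A_{jk\lambda\mu}+\sum_p\nu_j^p A_{pk\lambda\mu}+\sum_q\overline{\nu_k^q}A_{jq\lambda\mu}+\sum_{p,q}\nu_j^p\overline{\nu_k^q}A_{pq\lambda\mu}$$
is exactly $\mathcal{Q}(u,\xi_\ast)$ read off on the pair $(u_j,u_k)$. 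I would then perturb $\xi_\ast$ by a fibrewise correction $\eta_q^\mu$ chosen to accomplish two things simultaneously: first, to range over sections orthogonal to the holomorphic subspace $E$, so that the integrated $A_{pq}$-block term absorbs the global $\pi_\perp$ projection appearing in $-h^F(\pi_\perp L_j u_j,\pi_\perp L_k u_k)$; second, to produce a deterministic cross-term with $u$ reproducing the pointwise $\partial\nu/\partial\bar z$ integrand. Once the matching is in place, integrating the pointwise Schur-complement inequality $\mathcal{Q}(u,\xi_\ast+\eta)\geq\sum S_{jk\beta\gamma}u_{j\beta}\overline{u_{k\gamma}}$ over $\Omega_t$ yields the lemma.

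The main technical obstacle is precisely this identification step. The $-h^F(\pi_\perp L_j u_j,\pi_\perp L_k u_k)$ term is $L^2$-global, the projection depending on the Hilbert-space structure of $E_t$, while $\mathcal{A}$, $\nu_j^p$ and their derivatives live pointwise on $\Omega_t$. Bridging the two requires expanding $L_j u_j=D_{t_j}^F u_j+\sum_p D_{z_p}^F(\nu_j^p u_j)$ via Leibniz, separating out the holomorphic-in-$z$ piece (which $\pi_\perp$ annihilates) from the non-holomorphic remainder, and recognising this remainder as the optimally chosen correction $\eta$. Tracking the commutator-type contributions that generate the $\partial_{\bar z_q}\nu_j^p\cdot\overline{\partial_{\bar z_p}\nu_k^q}$ factor is where the bulk of the algebraic work resides; this is ultimately where the non-holomorphicity of the horizontal vector field $V_j$ enters the estimate.
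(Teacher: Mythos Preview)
Your Schur--complement framework is the right algebraic backbone, and the paper itself uses it (their Lemma on block--positive operators is exactly the Schur complement). The first half of your plan---observing that $H(h^F)_{jk\lambda\mu}u_{j\lambda}\overline{u_{k\mu}}$ is the full Nakano form evaluated at $(u_{j\lambda};\ \sum_j\nu_j^p u_{j\lambda})$, and that the pointwise minimum over the $z$--block equals the Schur complement $A_{jk\beta\gamma}-\sum B_{pq\lambda\mu}A_{jq\beta\mu}A_{pk\lambda\gamma}$---matches the paper and is correct.

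The gap is in the second half. You want to exhibit a test section $\tilde\xi=\xi_\ast+\eta$ such that the full left--hand side equals $\int_{\Omega_t}\mathcal Q(u,\tilde\xi)\,d\lambda$. But no pointwise (or even measurable) choice of $\eta$ can do this: the term $-\|\sum_j\pi_\perp(L_j u_j)\|^2$ involves the \emph{minimal} $L^2$--norm among all $w$ with $\bar\partial_z w=\bar\partial_z(\sum_j L_j u_j)$, and this minimal norm is controlled by curvature only through an analytic estimate, not an algebraic identity. Your suggested Leibniz splitting ``holomorphic piece $+$ remainder'' yields at best $\|\pi_\perp(\cdot)\|\le\|\text{remainder}\|$, with no control on the gap; and even if equality held, the resulting norm would be an $h^F$--norm, whereas the $A_{pq}$--block in $\mathcal Q$ is a \emph{curvature}--weighted norm. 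Matching these two is precisely the content of a H\"ormander--type $L^2$--estimate.

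This is exactly how the paper closes the argument: it invokes Lemma~\ref{lem:curvature estimate} (an $L^2$--estimate for $\bar\partial_z$ generalizing Wang's Theorem~A.5) to bound
\[
\Bigl\|\sum_j\pi_\perp(L_j u_j)\Bigr\|^2\ \le\ \int_{\Omega_t}h^F(v_1,v_1)\,d\lambda+\int_{\Omega_t}h^F(A^{-1}v_2,v_2)\,d\lambda,
\]
after an explicit decomposition $\bar\partial_z\bigl(\sum_j L_j u_j\bigr)=\sum_s dz_s\wedge D_{z_s}^F v_1+v_2$. The $v_1$--term produces exactly the $\partial\nu/\partial\bar z$ integral and the $h^F(A^{-1}v_2,v_2)$ term, when expanded, yields the $\nu$--cross terms in $H(h^F)$ plus the $B$--term; everything then collapses algebraically to the Schur complement. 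So your outline is missing one essential analytic ingredient---the $\bar\partial$--estimate---without which the ``bridging'' you describe cannot be carried out.
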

            By the usual approximation technique, we easily have 
            \begin{cor}\label{cor:positive vector bundle}
            If $F$ is just Nakano positive on $\Omega$, then for any  $u_1,\cdots,u_n\in C^2(U,E)$, we have (locally)
            \begin{align*}
             &\quad \sum_{j,k,\lambda,\mu}\int_{\Omega_t}H(h^F)_{jk\lambda\mu}u_{j\lambda}\overline{u_{k\mu}}d\lambda-\sum_{j,k}\int_{\Omega_t}h^F(\pi_{\perp}(L_ju_j),\pi_{\perp}(L_ku_k))d\lambda \\
                &\quad+\sum_{j,k,p,q}\int_{\Omega_t}\frac{\partial\nu_j^p}{\partial\bar{z}_q}\frac{\partial\overline{\nu_k^q}}{\partial z_p}h^F(u_j,u_k)d\lambda \\
            &\geq 0   .
			\end{align*} 
            \end{cor}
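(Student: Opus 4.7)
The plan is to deduce the corollary from Lemma \ref{lem:strictly positive vector bundle} by the standard metric-perturbation trick. For each $\epsilon>0$, set $\psi(t,z):=|t|^2+|z|^2$ and define
$$h^F_\epsilon := h^F e^{-\epsilon\psi}.$$
Under this change of metric the Chern curvature transforms as $\Theta^{(F,h^F_\epsilon)}=\Theta^{(F,h^F)}+\epsilon\,\partial\bar\partial\psi\otimes I_F$. The added summand $\epsilon\,\partial\bar\partial\psi\otimes I_F$ is Nakano strictly positive because $\psi$ is strictly plurisubharmonic, so Nakano positivity of $(F,h^F)$ upgrades to Nakano strict positivity of $(F,h^F_\epsilon)$ on a neighborhood of $\overline\Omega$.

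Now Lemma \ref{lem:strictly positive vector bundle} applies to $(F,h^F_\epsilon)$ and yields a lower bound $\mathrm{RHS}_\epsilon$ for the left-hand side $\mathrm{LHS}_\epsilon$ of the desired inequality with $h^F$ replaced by $h^F_\epsilon$. The crucial algebraic point is that, under the normalization $\sum_{q,\beta}A^\epsilon_{pq\lambda\beta}B^\epsilon_{sq\alpha\beta}=\delta_{ps}\delta_{\lambda\alpha}$ stated in Lemma \ref{lem:strictly positive vector bundle}, the integrand of $\mathrm{RHS}_\epsilon$ is the quadratic form associated to the Schur complement of the $z$-block $(A^\epsilon_{pq\lambda\mu})$ in the full Nakano-curvature matrix of $(F,h^F_\epsilon)$. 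Nakano strict positivity makes both the full matrix and this $z$-block positive definite, so the standard Schur-complement characterization of semi-definiteness gives $\mathrm{RHS}_\epsilon\geq 0$ pointwise, and hence $\mathrm{LHS}_\epsilon\geq 0$.

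Finally, let $\epsilon\to 0$. Since $h^F_\epsilon=h^F e^{-\epsilon\psi}\to h^F$ uniformly on the compact set $\overline\Omega_t$ together with all derivatives up to order two, every object appearing in $\mathrm{LHS}_\epsilon$---the tensors $H(h^F_\epsilon)_{jk\lambda\mu}$, the vector-valued expressions $L^\epsilon_j u_j$ built from the connection of $(F,h^F_\epsilon)$, and the pointwise inner products---converges uniformly to its $h^F$ counterpart. Dominated convergence then gives $\mathrm{LHS}_\epsilon\to\mathrm{LHS}$, and $\mathrm{LHS}\geq 0$ follows. The only real obstacle in this scheme is the Schur-complement identification of $\mathrm{RHS}_\epsilon$ with a pointwise non-negative quantity; once that algebraic observation is in hand, the remainder is just uniform convergence on a compact set.
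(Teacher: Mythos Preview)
Your proposal is correct and is precisely the ``usual approximation technique'' the paper invokes without further detail; the Schur-complement step you single out is exactly Lemma~\ref{lem(extend):the positivity of curvature estimate}, applied as in the proof of Theorem~\ref{thm(extend):Bern direct image bd domain}. The only point not literally covered by dominated convergence is the middle term, since $\pi_\perp$ itself depends on the metric; but the uniform sandwich $e^{-\epsilon\sup_{\overline\Omega_t}\psi}\,h^F\le h^F_\epsilon\le h^F$ immediately yields $e^{-\epsilon\sup\psi}\|\pi_\perp(\cdot)\|^2\le\|\pi_\perp^\epsilon(\cdot)\|_\epsilon^2\le\|\pi_\perp(\cdot)\|^2$ (both sides being $\inf_{f\in E_t}\|\cdot-f\|^2$ for the respective norms), and together with $L_j^\epsilon u_j\to L_j u_j$ this closes that small gap.
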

			Now we can prove Theorem \ref{thm(extend):Bern direct image bd domain}. For convenience, we restate it here.
		\begin{thm}[= Theorem \ref{thm(extend):Bern direct image bd domain}]\label{proof}
		Let $\Omega\subset U\times\mc^m$ be a family of bounded domains which has a plurisubharmonic defining function over a domain $U\subset\mc^n,$
        and let $(F,h^F)$ be a Hermitian holomorphic trivial vector bundle of finite rank $r$ defined on some neighborhood of $\overline\Omega$.
        Let $\{e_1,\cdots,e_r\}$ be the canonical holomorphic frame of $F$. For any $t\in U$, set
    	$$E_t:=\{u=\sum_{\lambda} u_\lambda dz_1\wedge\cdots\wedge dz_m\otimes e_\lambda|\ u_\lambda \in \mo(\Omega_t)\cap L^2(\Omega_t)\text{ for all }\lambda\}$$
    	with an inner product $h_t$ given by
    	$$h_t^E(u,v):=\int_{\Omega_t} \sum_{\lambda,\mu}u_\lambda(z)\overline{v_\mu(z)}h^F_{(t,z)}(e_\lambda,e_\mu)d\lambda(z),\ \forall u,v \in E_t.$$
        We set $E:=\cup_{t\in U}E_t$, then $(E,h^E)$ is Nakano positive if $(F,h^F)$ is Nakano positive on $\Omega$, and it is strictly positive in the sense of Nakano
        if $(F,h^F)$ is Nakano strictly positive on $\Omega$.
		\end{thm}
		\begin{proof}
			Notation as above and as in Appendix \ref{appendix}. To prove the theorem, we only need to consider the case $(F,h^F)$ is Nakano strictly positive by the usual approximation
            technique, then we can use Lemma \ref{lem(extend):the positivity of curvature estimate}.
            By Theorem \ref{curvature formula 1}, Lemma \ref{lem:strictly positive vector bundle}, Lemma \ref{lem(extend):the positivity of curvature estimate},
            and Corollary \ref{cor:positive definite}, it suffices to prove that 
            $$
            \int_{\Omega_t} \sum_{j,k,\beta,\gamma}(A_{jk\beta\gamma}-\sum_{p,q,\lambda,\mu}B_{pq\lambda\mu}A_{jq\beta\mu}A_{pk\lambda\gamma})u_{j\beta}\overline{u_{k\gamma}} d\lambda>0
            $$
            for any not all zero smooth sections $u_1,\cdots,u_n$ of $E$.\\
		    \indent Let $(W,\langle-,-\rangle)$ be the (complex) inner product space whose orthonormal basis is given by $\{d\bar{z}_p\otimes e_\lambda,\ d\bar{t}_j\otimes e_\mu|\
             \forall p,j,\lambda,\mu\}$, and let (for the simplicity of notations, we omit $dz$ here)
		    $$U:=\operatorname{span}_\mc\{d\bar{z}_p\otimes e_\lambda|\ \forall p,\lambda\},\ V:=\operatorname{span}_\mc\{d\bar{t}_j\otimes e_\lambda|\ \forall j,\lambda\}.$$
		    We claim  the linear map 
		    $$ S:=\left(\begin{array}{cc}
		    	S_1 & S_2\\
		    	S_3 & S_4
		    \end{array}\right)$$
		    is positive define. In fact, we have 
		    \begin{align*}
		    	&\quad S(\sum_{p,\lambda}u_{p\lambda} d\bar{z}_p\otimes e_\lambda+\sum_{j,\lambda}u_{j\lambda}d\bar{t}_j\otimes e_\lambda)\\
		    	&=\sum_{p,q,\lambda,\mu}A_{pq\lambda\mu}u_{p\lambda}d\bar{z}_q\otimes e_\mu+\sum_{j,q,\lambda,\mu}A_{jq\lambda\mu}u_{j\lambda}d\bar{z}_q\otimes e_\mu \\
		    	&\quad +\sum_{p,k,\lambda,\mu}A_{pk\lambda\mu}u_{p\lambda}d\bar{t}_k\otimes e_\mu+\sum_{j,k,\lambda,\mu}A_{jk\lambda\mu}u_{j\lambda}d\bar{t}_k\otimes e_\mu, 
		    \end{align*}
		    and 
		    \begin{align*}
		    	&\quad \langle S(\sum_{p,\lambda}u_{p\lambda} d\bar{z}_p\otimes e_\lambda+\sum_{j,\lambda}u_{j\lambda}d\bar{t}_j\otimes e_\lambda,
		    	\sum_{p,\lambda}u_{p\lambda} d\bar{z}_p\otimes e_\lambda+\sum_{j,\lambda}u_{j\lambda}d\bar{t}_j\otimes e_\lambda\rangle\\		    	
                &=\sum_{p,q,\lambda,\mu}A_{pq\lambda\mu}u_{p\lambda}\overline{u_{q\mu}}+\sum_{j,q,\lambda,\mu}A_{jq\lambda\mu}u_{j\lambda}\overline{u_{q\mu}} \\
                &\quad +\sum_{k,p,\lambda,\mu}A_{pk\lambda\mu}u_{p\lambda}\overline{u_{k\mu}}+\sum_{j,k,\lambda,\mu}A_{jk\lambda\mu}u_{j\lambda}\overline{u_{k\mu}}. 
		    \end{align*}
		    Since  $(F,h^F)$ is Nakano strictly positive, then we know $S$ is positive define, and then we know $S_4-S_3S_1^{-1}S_2$ is 
            positive definite by Lemma \ref{lem(extend):the positivity of curvature estimate}. \\
            \indent However, from 
		    \begin{align*}
		    	&\quad (S_4-S_3S_1^{-1}S_2)(d\bar{t}_j\otimes e_\lambda)\\
		    	&=(S_4-S_3BS_2)(d\bar{t}_j\otimes e_\lambda) \\
		    	&=\sum_{k,\mu}A_{jk\lambda\mu}d\bar{t}_k\otimes e_\mu-\sum_{p,q,k,\mu,\alpha,\beta}A_{jq\lambda\mu}B_{pq\alpha\mu}A_{pk\alpha\beta}d\bar{t}_k\otimes e_\beta,
		    \end{align*}
		    we know 
		    \begin{align*}
		    	&\quad \langle(S_4-S_3S_1^{-1}S_2)(\sum_{j,\lambda}u_{j\lambda}d\bar{t}_j\otimes e_\lambda),\sum_{j,\lambda}u_{j\lambda}d\bar{t}_j\otimes e_\lambda\rangle\\
            	&=\sum_{j,k,\lambda,\mu}A_{jk\lambda\mu}u_{j\lambda}\overline{u_{k\mu}}-\sum_{j,k,p,q,\lambda,\mu,\alpha,\beta}A_{jq\lambda\mu}B_{pq\alpha\mu}A_{pk\alpha\beta}u_{j\lambda}
                  \overline{u_{k\beta}} \\
		    	&=\sum_{j,k,\beta,\gamma}\left(A_{jk\beta\gamma}-\sum_{p,q,\lambda,\mu}B_{pq\lambda\mu}A_{jq\beta\mu}A_{pk\lambda\gamma}\right)u_{j\beta}\overline{u_{k\gamma}},
		    \end{align*}
		    so the positive definiteness of $S_4-S_3S_1^{-1}S_2$ implies what we wanted.
		\end{proof}
            \section{The proof of Theorem \ref{thm:vector bundles of holomorphic functions}}
            Now we can prove Theorem \ref{thm:vector bundles of holomorphic functions}, which is a consequence of Theorem \ref{curvature formula 1}.
            \begin{thm}[= Theorem \ref{thm:vector bundles of holomorphic functions}]\label{proof 2}
         Assumptions (or notations) of $\Omega, F,E$ as in Theorem \ref{thm(extend):Bern direct image bd domain}.
         We moreover assume $\Omega$ is a strictly pseudoconvex family of bounded domains over $U$, then $(E,h^E)$ is strictly positive in the sense of Nakano 
         if $(F,h^F)$ is just Nakano positive on $\Omega$.
           \end{thm}
           \begin{proof}
           Since the curvature operator of $(E,h^E)$ is pointwise defined, we may prove the theorem for any fixed $t_0\in U$.
            For any smooth sections $u_1,\cdots,u_n$ of $E$. By Theorem \ref{curvature formula 1} and Corollary \ref{cor:positive vector bundle}, we know   
            $$
            h^E\left(\Theta_{jk}^{(E,h^E)}u_j,u_k\right)\geq \sum_{j,k}\int_{\partial \Omega_{t_0}}H_0(\rho)_{j\bar{k}}h^F(u_j,u_k)\frac{dS}{|\nabla\rho|},
            $$
            where 
            $$
             H_0(\rho)_{j\bar k}:=\rho_{j\bar{k}}-\sum_{p,q}\rho_{j\bar p}\rho^{p\bar{q}}\rho_{q\bar{k}}.
            $$
            \indent By Corollary \ref{cor:positive definite}, there is a constant $\delta_1>0$ such that 
            $\frac{H_0(\rho)(z)}{|\nabla_z\rho(t_0,z)|}-\delta_1 I$ is positive definite (as matrices) for any $z\in\partial \Omega_{t_0}$,
            where $I$ is the identity matrix.
            Recall for any positive definite matrix $A$, there is a positive matrix $B$, such that $A=B^*B$.
            Thus, as $h$ is also positive definite, there is a constant $\delta_2>0$ such that  
            $$
            \sum_{j,k}\int_{\partial \Omega_{t_0}}H_0(\rho)_{j\bar{k}}h^F(u_j,u_k)\frac{dS}{|\nabla\rho|}\geq \delta_2\sum_j\int_{\partial \Omega_{t_0}}|u_j|^2dS.
            $$ 
            By Lemma \ref{lem:basic inequality for holomorphic functions}, we know there is a constant $\delta_3>0$ such that 
            $$
            \sum_j\int_{\partial \Omega_{t_0}}|u_j|^2dS\geq \delta_3\sum_j\int_{\Omega_{t_0}}|u_j|^2d\lambda,
            $$
            which completes the proof.
           \end{proof}
          \section{The proof of Theorem \ref{thm:flatness criterion}}\label{section:criterion}
          In this section, we will give the proof of Theorem \ref{thm:flatness criterion}. A key lemma will be used is the following lemma, see \cite[Lemma 6.2]{Wang17} for its proof.
          \begin{lem}\label{key lemma}
           Let $p\colon \Omega\subset U\times\mc^m(m\geq 2)\rw U$ be a family of bounded domains which has a plurisubharmonic defining function $\rho$ over a domain $U\subset\mc^n,$
        and let $(F,h^F)$ be a Hermitian holomorphic trivial vector bundle of finite rank $r$ defined on some neighborhood of $\overline\Omega$
        such that $\Theta^{(F,h^F)}\equiv 0$ on $\Omega$ and $h^F(v_1,v_1)-h^F(v,v)\equiv 0$ on $\Omega$, where $v,v_1$ is defined in the proof of Lemma \ref{lem:compute}. Then for each $1\leq j\leq n$, $V_j|_{\partial\Omega}$ has a smooth extension $W_j$ to $\overline\Omega$, that is holomorphic on fibers of $\Omega$
        and such that $p_*(W_j)=\frac{\partial}{\partial t_j}$.
          \end{lem}
          Now we give the proof of Theorem \ref{thm:flatness criterion}.
          \begin{thm}[= Theorem \ref{thm:flatness criterion}]\label{flatness 2}
        Let $\Omega\subset U\times\mc^m(m\geq 2)$ be a family of bounded domains which has a plurisubharmonic defining function $\rho$ over a domain $U\subset\mc^n,$
        and let $(F,h^F)$ be a Hermitian holomorphic trivial vector bundle of finite rank $r$ defined on some neighborhood of $\overline\Omega$
        which is Nakano positive on $\Omega$. Define $E$ and $h^E$ as in Theorem \ref{thm(extend):Bern direct image bd domain},  then the following two statements are equivalent:
        \begin{itemize}
          \item[(1)] $\Theta^{(E,h^E)}\equiv 0$ on $U.$
          \item[(2)] $\Theta^{(F,h^F)}\equiv 0$ on $\Omega$ and $\Omega$ is locally trivial.
        \end{itemize}
        \end{thm}
        \begin{proof}
        Firstly, we consider the case (2) implies (1). By assumption, we may assume $\Omega$ is a product domain,
        i.e. $\Omega=U\times\Omega_{t_0}$ for some $t_0\in U$ (note that here we need $\Phi$ smooth to boundary to identify two direct image bundles).
        Thus, we know  $\rho$ is independent of $t\in U$, and $\nu_j^p\equiv 0$ in $\Omega$ for all $j,p$. 
        Note that $\Theta^{(F,h^F)}\equiv 0$ on $\Omega$, then Theorem \ref{curvature formula 1} says that 
        $$\sum_{j,k}h^E(\Theta_{jk}^{(E,h^E)}u_j,u_k)=-\sum_{j,k}\int_{\Omega_t}h^F(\pi_{\perp}(D_{t_j}^Fu_j),\pi_{\perp}(D_{t_k}^Fu_k))d\lambda$$
        for any $u_1,\cdots,u_n\in C^2(U,E)$. Then the computation of the proof of Lemma \ref{lem:compute}
        shows that 
        $$v_1=v_2=0,\ \bar\partial_z v=\sum_s dz_s\wedge D_{z_s}^Fv_1+ v_2=0,$$
        where $v,v_1,v_2$ is defined in the proof of Lemma \ref{lem:compute}. Note also that $v$ is the minimal solution, then we know $v\equiv 0$ in $\Omega_t$ for each $t\in U$. Thus, we conclude that $\Theta^{(E,h^E)}\equiv 0$ on $\Omega$.\\
        \indent Now we prove (1) implies (2). By Corollary \ref{cor:positive vector bundle},
        for any not all zero sections $u_1,\cdots,u_n\in C^2(U,E)$, we have 
        \begin{align*}
        &\sum_{j,k,\lambda,\mu}\int_{\Omega_t}H(h^F)_{jk\lambda\mu}u_{j\lambda}\overline{u_{k\mu}}d\lambda-\sum_{j,k}\int_{\Omega_t}h^F(\pi_{\perp}(L_ju_j),\pi_{\perp}(L_ku_k))d\lambda \\
        &+ \sum_{j,k,p,q}\int_{\Omega_t}\frac{\partial\nu_j^p}{\partial\bar{z}_q}\frac{\partial\overline{\nu_k^q}}{\partial z_p} h^F\left(u_j,u_k\right)d\lambda\geq 0,\ \forall\ t\in U,
       \end{align*}
       where the inequality is strict by Lemma \ref{lem:strictly positive vector bundle}
       and the proof of Theorem \ref{proof} if $\Theta^{(F,h^F)}(z)>0$ for some $z\in \Omega_t$. 
       Using  Corollary \ref{cor:positive definite} and the proof Theorem \ref{proof 2}, 
        for any not all zero sections $u_1,\cdots,u_n\in C^2(U,E)$, we have
        $$\sum_{j,k}\int_{\partial\Omega_t}H_0(\rho)_{j\bar{k}}h^F(u_j,u_k)\frac{dS}{|\nabla\rho|}\geq 0,\ \forall\ t\in U,$$
        where the inequality is strict if some eigenvalues of the matrix $(H_0(\rho)_{j\bar k}(z))$ are positive for some 
        $z\in \partial\Omega_t$. For any not all zero sections $u_1,\cdots,u_n\in C^2(U,E)$, note also that 
        $$\sum_{j,k}\int_{\partial\Omega_t}\frac{(\rho_j-\sum_s\rho_{j\bar{s}}\rho^s)(\rho_{\bar{k}}-\sum_p\overline{\rho_{k\bar{p}}}\rho^{\bar{p}})}{|\partial\rho|^2}h^F(u_j,u_k)
        \frac{dS}{|\nabla\rho|}\geq 0,\ \forall\ t\in U,$$
        where the inequality is strict if $u_j(z)\rho_j(z)-\sum_su_j(z)\rho_{j\bar{s}}(z)\rho^s(z)\neq 0$ for some 
        $j$ and some $z\in\partial\Omega_t$. Thus, by assumption and Theorem \ref{curvature formula 1}, we know $\Theta^{(F,h^F)}\equiv 0$ on $\Omega$,
        and
        $$H(\rho)_{j\bar{k}}\equiv 0\text{ on }\bing_{t\in U}\partial\Omega_t,\ \forall\ j,k.$$
        Moreover, we have $h^F(v_1,v_1)-h^F(v,v)=0\text{ on }\Omega,$ where $v,v_1$ is defined in the proof of Lemma \ref{lem:compute}.\\
        \indent For each $j$, let $W_j$ be the corresponding smooth extension of $V_j|_{\partial\Omega}$ as in Lemma \ref{key lemma}, then for any $j,k$, we have 
        $$[W_j,\overline{W_k}]=\frac{\partial \overline{W_k}}{\partial t_j}-\frac{\partial W_j}{\partial \bar{t}_k}.$$
        Note that $[W_j,\overline{W_k}]|_{\partial\Omega}=[V_j,V_k]|_{\partial\Omega}=0$ for any $j,k$, then we know 
        $$\left.\frac{\partial W_j}{\partial \bar{t}_k}\right|_{\partial\Omega_t}\equiv 0\text{ on } U.$$
        Since each $W_j$ is holomorphic on fibers, then we know each $W_j$ is holomorphic with respect to $t$  by Maximum Modulus Principle. On the other hand, since $[V_j,V_k]\equiv 0$ for any $j,k$, then we have  $[W_j,W_k]|_{\partial\Omega}\equiv 0,$ and then we get $[W_j,W_k]\equiv 0$ in $\Omega$. From this, we will 
        prove $\Omega$ is locally trivial. We may assume $0\in U$, and we only need to prove $\Omega$ is locally trivial near $0$.  We divide it into two cases.\\
        {\bf Case 1:} $n=1$.\\
        \indent   Write  
        $$X:=W_1,\ Y:=X+\overline{X},\ Z:=\sqrt{-1}(X-\overline{X}),$$
        then $Y,Z$ satisfy the following 
        \begin{itemize}
          \item[(1)] $Y,Z$ are tangent to $\partial\Omega;$
          \item[(2)] $[Y,Z]=0$ in $\Omega;$
          \item[(3)] $p_*(Y)=\frac{\partial}{\partial\mu},\ p_*(Z)=\frac{\partial}{\partial\nu}.$
        \end{itemize}
        Choose an open neighborhood $V$ of $\overline\Omega$ such that $X$ is smooth on $V$.
        Let $\alpha,\beta$ be the smooth flows generated by $Y,Z$ respectively on $V$,
        then $\alpha$ commutes with $\beta$ as $[Y,Z]=0$.  Choose $\tau>0$ such that $[-\tau,\tau]\times\overline{\Omega}$ is in the domains of $\alpha$ and $\beta$.
        For any $t\in U$, we write $t:=\mu+\sqrt{-1}\nu,\ \mu,\nu\in\mr$, and we also identify 
        $(-\tau,\tau)^2$ with the domain $(-\tau,\tau)+\sqrt{-1}(-\tau,\tau)$ in $\mc$. Since $Z$ is tangent to $\partial\Omega$, then $\beta(\nu,z)\in\overline\Omega$
        for any $-\tau\leq\nu\leq\tau$ and any $z\in\overline\Omega$. Define a map 
        $$\Phi\colon (-\tau,\tau)^2\times\overline{\Omega_0}\rw V,\ (\mu+\sqrt{-1}\nu,z)\mapsto \alpha(\mu,\beta(\nu,z)),$$
        then we know $\Phi$ restricts to a diffeomorphism (still denoted by $\Phi$) between $(-\tau,\tau)^2\times\Omega_0$ and $ p^{-1}((-\tau,\tau)^2),$
        and
        $$\Phi(\{t\}\times\Omega_0)=\Omega_t,\ \forall\ t\in (-\tau,\tau)^2.$$
        To prove $\Omega$ is locally trivial near $0$, it suffices to prove the following two claims.\\
        {\bf Claim 1:} $\Phi$ is holomorphic on $(-\tau,\tau)^2\times\Omega_0$. \\
        \indent Fix $z_0\in \Omega_0$, and consider the map 
        $$\varphi\colon (-\tau,\tau)^2\rw \Omega,\ t=\mu+\sqrt{-1}\nu\mapsto \alpha(\mu,\beta(\nu,z_0)),$$
        then we get 
        $$\frac{\partial\varphi(t)}{\partial t}=X(\varphi(t))\text{ on }(-\tau,\tau)^2,\ \varphi(0)=z_0,$$
        which implies $\Phi$ is the holomorphic flow generated by $X$, so $\Phi$ is holomorphic on $(-\tau,\tau)^2\times\Omega_0.$\\
        {\bf Claim 2:} $\Phi$ can be extended smoothly to a neighborhood of $V_0:=(-\eta,\eta)^2\times \overline\Omega_0$
        for some $0<\eta<\tau$.\\
        \indent For any $(t,z)\in(-\tau,\tau)^2\times\overline{\Omega_0}$, write $\Phi(t,z)=(t,f_1(t,z),\cdots,f_m(t,z))$.
        By Whitney extension theorem, there is a neighborhood $V_1$ of $(-\tau,\tau)^2\times\overline{\Omega_0}$
        such that, for any $1\leq i\leq m$,  $f_i$ can be extended to a smooth function $g_i$ on $V_1$.
        Choose $0<\eta<\tau$ and choose an open neighborhood $V_2$ of $\overline\Omega_0$ such that $(-\eta,\eta)^2\times V_2\subset V_1.$
        Define a smooth map
        $$\widetilde{\Phi}\colon (-\eta,\eta)^2\times V_2\rw \mc^{m+1},\ (t,z)\mapsto (t,g_1(t,z),\cdots,g_m(t,z)),$$
        then $\widetilde{\Phi}$ is a desired smooth extension of $\Phi$. (Moreover, using the inverse function theorem and the fact $\widetilde{\Phi}|_{(-\eta,\eta)^2\times\overline{\Omega_0}}$ is a diffeomorphism, we know $\widetilde{\Phi}$ is in fact an embedding by shrinking $\eta$ and $V_2$ if necessary).\\  
        {\bf Case 2:} $n\geq 2.$\\
        \indent For any $j$, set 
        $$Y_j:=W_j+\overline{W_j},\ Z_j:=\sqrt{-1}(W_j-\overline{W_j}),$$
        then $Y_j,Z_j$ satisfy the following 
        \begin{itemize}
          \item[(1)] $Y_j,Z_j$ are tangent to $\partial\Omega$ for any $j;$
          \item[(2)] $[Y_j,Y_k]=[Y_j,Z_k]=[Z_j,Z_k]=0$ in $\Omega$ for any $j,k;$
          \item[(3)] $p_*(Y_j)=\frac{\partial}{\partial\mu_j},\ p_*(Z_j)=\frac{\partial}{\partial\nu_j}$ for any $j$, where 
          $$t_j:=\mu_j+\sqrt{-1}\nu_j,\ \mu_j,\nu_j\in\mr.$$
        \end{itemize} 
        The remaining is similar to {\bf Case 1}, so we omit it.\\
        \indent  The proof is complete.
        \end{proof}

\newpage
		\section*{Appendix}\label{appendix}
	  In this section, we give several long computations. All notations are the same with Section \ref{section:curvature formula}.
      Firstly, we give the proof of Theorem \ref{curvature formula 1} when $F$ is a vector bundle.
		 \begin{thm}[= Theorem \ref{curvature formula 1}]\label{curvature_formula}
         For any  $u_1,\cdots,u_n\in C^2(U,E)$, we have (locally) 
        \begin{align*}
        &\quad h^E\left(\Theta_{jk}^{(E,h^E)}u_j,u_k\right)\\
        &=\sum_{\lambda,\mu}\int_{\Omega_t}H(h^F)_{jk\lambda\mu}u_{j\lambda}\overline{u_{k\mu}}d\lambda-\int_{\Omega_t}h^F(\pi_{\perp}(L_ju_j),\pi_{\perp}(L_ku_k))d\lambda \\
        &\quad+\sum_{p,q}\int_{\Omega_t}\frac{\partial\nu_j^p}{\partial\bar{z}_q}\frac{\partial\overline{\nu_k^q}}{\partial z_p} h^F\left(u_j,u_k\right)d\lambda+\int_{\partial\Omega_t}H(\rho)_{j\bar{k}}h^F(u_j,u_k)\frac{dS}{|\nabla\rho|} ,
       \end{align*}
       where 
       $$
        L_ju_j:=D_{t_j}^Fu_j+\sum_p D_{z_p}^F(\nu_j^p u_j),
       $$
       $$
         H(h^F)_{jk\lambda\mu}:=A_{jk\lambda\mu}+\sum_p\nu_j^pA_{pk\lambda\mu}+\sum_q\overline{\nu_k^q}A_{jq\lambda\mu}+\sum_{p,q}\nu_j^p\overline{\nu_k^q}A_{pq\lambda\mu},
       $$
       $$
        H(\rho)_{j\bar{k}}:=\left(\rho_{j\bar{k}}-\sum_{p,s}\rho_{j\bar{s}}\rho^{s\bar{p}}\overline{\rho_{k\bar{p}}}\right)
        +\frac{(\rho_j-\sum_s\rho_{j\bar{s}}\rho^s)(\rho_{\bar{k}}-\sum_p\overline{\rho_{k\bar{p}}}\rho^{\bar{p}})}{|\partial\rho|^2}.
       $$
       In particular, the curvature operator $\Theta_{jk}^{(E,h^E)}$ is pointwise defined, i.e. it does not  involve derivatives with respect to $t\in U$.
        \end{thm}
        \begin{proof}
        Fix $t\in U,\ j,k$. For any $u\in C^1(U,F)$, set  
        $$
         N_j u:=D_{z_p}^F(\nu_j^p u)=\sum_p\nu_j^p D_{z_p}^Fu+\sum_p\frac{\partial \nu_{j}^p}{\partial z_p}u.
        $$
       Similar to the proof of Theorem \ref{thm:line bundle}, we have 
         \begin{align*}
       &\quad h^E\left(\Theta_{jk}^{(E,h^E)}u_j,u_k\right)\\
        &=\int_{\Omega_t}h^F\left(\Theta_{jk}^{(F,h^F)}u_j,u_k\right)d\lambda-\int_{\Omega_t}h^F(\pi_{\perp}(L_ju_j),\pi_{\perp}(L_ku_k))d\lambda   \\
         &\quad +\int_{\Omega_t}h^F\left([N_j,\frac{\partial}{\partial\bar{t}_k}]u_j,u_k\right)d\lambda-\sum_q\int_{\Omega_t}h^F\left(\frac{\partial}{\partial\bar{z}_q}(L_ju_j),\nu_k^q u_k\right)d\lambda  \\
        &=\sum_{\lambda,\mu}\int_{\Omega_t}A_{jk\lambda\mu}u_{j\lambda}\overline{u_{k\mu}}d\lambda-\int_{\Omega_t}h^F(\pi_{\perp}(L_ju_j),\pi_{\perp}(L_ku_k))d\lambda   \\
        &\quad +\int_{\Omega_t}h^F\left([N_j,\frac{\partial}{\partial\bar{t}_k}]u_j,u_k\right)d\lambda-\sum_q\int_{\Omega_t}h^F(\frac{\partial}{\partial\bar{z}_q}(L_ju_j),\nu_k^q u_k)d\lambda .
       \end{align*}
       Now we only need to prove the following two claims.\\
       {\bf Claim 1:} For the last two terms of the above equality, we have 
       \begin{align*}
        &\quad h^F([N_j,\frac{\partial}{\partial\bar{t}_k}]u_j,u_k)-\sum_qh^F(\frac{\partial}{\partial\bar{z}_q}(L_ju_j),\nu_k^q u_k)\\
        &=\sum_{\lambda,\mu}\left(\sum_p\nu_j^pA_{pk\lambda\mu}+\sum_q\overline{\nu_k^q}A_{jq\lambda\mu}+\sum_{p,q}\nu_j^p\overline{\nu_k^q}A_{pq\lambda\mu}\right)u_{j\lambda}\overline{u_{k\mu}}
          \\
        &\quad -\frac{\partial}{\partial z_p}\left(\left(\frac{\partial \nu_j^p}{\partial\bar{t}_k}+\sum_q\overline{\nu_k^q}\frac{\partial\nu_j^p}{\partial\bar{z}_q}\right)h^F(u_j,u_k)\right)+\sum_{p,q}\frac{\partial\nu_j^p}{\partial\bar{z}_q}\frac{\partial\overline{\nu_k^q}}{\partial z_p}h^F(u_j,u_k).
        \end{align*}
        {\bf Claim 2:} We moreover have 
        $$
        \sum_p\int_{\Omega_t}\frac{\partial}{\partial z_p}\left(\left(\frac{\partial \nu_j^p}{\partial\bar{t}_k}+\sum_q\overline{\nu_k^q}\frac{\partial\nu_j^p}{\partial\bar{z}_q}\right)h^F(u_j,u_k)\right)d\lambda
        =\int_{\partial\Omega_t}H(\rho)_{j\bar k}h^F(u_j,u_k)dS.
        $$
        \indent The proof of {\bf Claim 2} is almost the same with the proof of {\bf Claim 2} in 
        Theorem \ref{thm:line bundle}, so we only prove {\bf Claim 1}.\\ 
        \indent Note that  
        \begin{align*}
            &\quad [N_j,\frac{\partial}{\partial\bar{t}_k}]u_j\\
            &=\sum_p\nu_j^p D_{z_p}^F\left(\frac{\partial u_j}{\partial\bar{t}_k}\right)+\sum_p\frac{\partial \nu_j^p}{\partial z_p}\frac{\partial u_j}{\partial\bar{t}_k}
            -\sum_p\frac{\partial }{\partial\bar{t}_k}\left(\nu_j^p D_{z_p}^Fu_j+\frac{\partial\nu_j^p}{\partial z_p}u_j\right) \\
            &=\sum_{p,\lambda}\nu_j^p\left(\frac{\partial^2 u_{j\lambda}}{\partial z_p\partial\bar{t}_k}dz\otimes e_\lambda +\frac{\partial u_{j\lambda}}{\partial\bar{t}_k}dz\otimes D_{z_p}^Fe_\lambda\right) \\
            &\quad -\sum_p\frac{\partial \nu_j^p}{\partial\bar{t}_k} D_{z_p}^Fu_j-\sum_p\nu_j^p\frac{\partial}{\partial\bar{t}_k}(D_{z_p}^Fu_j)
            -\sum_p\frac{\partial^2\nu_{j}^p}{\partial z_p\partial\bar{t}_k}u_j \\
            &=\sum_{p,\lambda}\nu_j^p\left(\frac{\partial^2 u_{j\lambda}}{\partial z_p\partial\bar{t}_k}dz\otimes e_\lambda +\frac{\partial u_{j\lambda}}{\partial\bar{t}_k}dz\otimes D_{z_p}^Fe_\lambda\right) \\
            &\quad -\sum_{p,\lambda}\frac{\partial \nu_j^p}{\partial\bar{t}_k} \left(\frac{\partial u_{j\lambda}}{\partial z_p}dz\otimes e_\lambda+ u_{j\lambda}dz\otimes D_{z_p}^Fe_\lambda\right) \\
            &\quad -\sum_{p,\lambda}\nu_j^p\frac{\partial}{\partial\bar{t}_k}\left(\frac{\partial u_{j\lambda}}{\partial z_p}dz\otimes e_\lambda+ u_{j\lambda}dz\otimes D_{z_p}^Fe_\lambda\right)
            -\frac{\partial^2\nu_{j}^p}{\partial z_p\partial\bar{t}_k}u_j \\
            &=-\sum_{p,\lambda}\frac{\partial \nu_j^p}{\partial\bar{t}_k} \left(\frac{\partial u_{j\lambda}}{\partial z_p}dz\otimes e_\lambda+ u_{j\lambda}dz\otimes D_{z_p}^Fe_\lambda\right) \\
            &\quad -\sum_{p,\lambda}\nu_j^pu_{j\lambda}dz\otimes \frac{\partial}{\partial\bar{t}_k}(D_{z_p}^Fe_\lambda)-\sum_p\frac{\partial^2\nu_{j}^p}{\partial z_p\partial\bar{t}_k}u_j \\
            &=-\sum_{p,\alpha}\frac{\partial \nu_j^p}{\partial\bar{t}_k} \left(\frac{\partial u_{j\alpha}}{\partial z_p}+\sum_{\lambda}u_{j\lambda}\Gamma_{p\lambda}^\alpha\right)
            dz\otimes e_\alpha \\
            &\quad -\sum_{p,\alpha}\left(\sum_{\lambda}\nu_j^pu_{j\lambda}\frac{\partial \Gamma_{p\lambda}^\alpha}{\partial\bar{t}_k}+\frac{\partial^2\nu_{j}^p}{\partial z_p\partial\bar{t}_k}u_{j\alpha}\right)dz\otimes e_\alpha,
        \end{align*}
        then we get 
        \begin{align*}
        &\quad h^F([N_j,\frac{\partial}{\partial\bar{t}_k}]u_j,u_k)\\ 
        &=-\sum_{p,\alpha,\beta}\frac{\partial \nu_j^p}{\partial\bar{t}_k} \left(\frac{\partial u_{j\alpha}}{\partial z_p}+\sum_{\lambda}u_{j\lambda}\Gamma_{p\lambda}^\alpha\right)
         \overline{u_{k\beta}}h_{\alpha\bar\beta}^F   \\
        &\quad -\sum_{p,\alpha,\beta}\left(\sum_{\lambda}\nu_j^pu_{j\lambda}\frac{\partial \Gamma_{p\lambda}^\alpha}{\partial\bar{t}_k}+\frac{\partial^2\nu_{j}^p}{\partial z_p\partial\bar{t}_k}u_{j\alpha}\right)\overline{u_{k\beta}}h_{\alpha\bar\beta}^F  \\
        &=-\sum_{p,\lambda,\mu}\frac{\partial \nu_j^p}{\partial\bar{t}_k}\frac{\partial u_{j\lambda}}{\partial z_p}
         \overline{u_{k\mu}}h_{\lambda\bar\mu}^F-\sum_{p,\lambda,\mu}\frac{\partial \nu_j^p}{\partial\bar{t}_k}u_{j\lambda}\overline{u_{k\mu}}\frac{\partial h_{\lambda\bar\mu}^F}{\partial z_p} \\
        &\quad +\sum_{p,\lambda,\mu}\nu_j^pA_{pk\lambda\mu}u_{j\lambda}\overline{u_{k\mu}}-\sum_{p,\lambda,\mu}\frac{\partial^2\nu_{j}^p}{\partial z_p\partial\bar{t}_k}u_{j\lambda}\overline{u_{k\mu}}h_{\lambda\bar\mu}^F . 
        \end{align*}
        We also know 
         \begin{align*}
        &\quad \frac{\partial}{\partial\bar{z}_q}(L_ju_j)\\
        &=\sum_{\lambda}\frac{\partial}{\partial\bar{z}_q}\left(\frac{\partial u_{j\lambda}}{\partial t_j}
         +\sum_p\nu_j^p\frac{\partial u_{j\lambda}}{\partial z_p}+\sum_p\frac{\partial\nu_j^p}{\partial z_p}u_{j\lambda}\right)dz\otimes e_\lambda \\
        &\quad+\sum_{\lambda,\mu}\frac{\partial}{\partial\bar{z}_q}\left(u_{j\lambda}\Gamma_{j\lambda}^\mu+\sum_p\nu_j^pu_{j\lambda}\Gamma_{p\lambda}^\mu\right) dz\otimes e_\mu \\
        &=\sum_{p,\lambda}\left(\frac{\partial \nu_j^p}{\partial\bar{z}_q}\frac{\partial u_{j\lambda}}{\partial z_p}+\frac{\partial^2\nu_j^p}{\partial z_p\partial\bar{z}_q}u_{j\lambda}\right)
        dz\otimes e_\lambda \\
        &\quad -\sum_{\lambda,\mu}u_{j\lambda}\left(A_{jq\lambda}^\mu-\sum_p\frac{\partial\nu_j^p}{\partial\bar{z}_q}\Gamma_{p\lambda}^\mu
        +\sum_p\nu_{j}^pA_{pq\lambda}^\mu\right)dz\otimes e_\mu ,
        \end{align*}
        then we have 
        \begin{align*}
         &\quad \sum_q h^F\left(\frac{\partial}{\partial\bar{z}_q}(L_ju_j),\nu_k^q u_k\right)\\
         &=\sum_{p,q,\lambda,\mu}\left(\frac{\partial \nu_j^p}{\partial\bar{z}_q}\frac{\partial u_{j\lambda}}{\partial z_p}+\frac{\partial^2\nu_j^p}{\partial z_p\partial\bar{z}_q}u_{j\lambda}\right)
         \overline{\nu_k^q}\overline{u_{k\mu}}h_{\lambda\bar{\mu}}^F \\
         &\quad -\sum_{q,\lambda,\mu,\alpha}u_{j\lambda}\left(A_{jq\lambda}^\alpha-\sum_p\frac{\partial\nu_j^p}{\partial\bar{z}_q}\Gamma_{p\lambda}^\alpha
        +\sum_p\nu_{j}^pA_{pq\lambda}^\alpha\right) \overline{\nu_k^q}\overline{u_{k\mu}}h_{\alpha\bar\mu}^F \\
        &=\sum_{p,q,\lambda,\mu}\left(\frac{\partial \nu_j^p}{\partial\bar{z}_q}\frac{\partial u_{j\lambda}}{\partial z_p}+\frac{\partial^2\nu_j^p}{\partial z_p\partial\bar{z}_q}u_{j\lambda}\right)
         \overline{\nu_k^q}\overline{u_{k\mu}}h_{\lambda\bar{\mu}}^F-\sum_{q,\lambda,\mu}\overline{\nu_k^q}A_{jq\lambda\mu}u_{j\lambda}\overline{u_{k\mu}} \\
         &\quad +\sum_{p,q,\lambda,\mu} \overline{\nu_k^q}\frac{\partial\nu_j^p}{\partial\bar{z}_q}
        u_{j\lambda}\overline{u_{k\mu}}\frac{\partial h_{\lambda\bar{\mu}}^F}{\partial z_p}-\sum_p\nu_{j}^p\overline{\nu_k^q}u_{j\lambda}\overline{u_{k\mu}}A_{pq\lambda\mu} .
        \end{align*}
         Thus, we have
        \begin{align*}
        &\quad h^F([N_j,\frac{\partial}{\partial\bar{t}_k}]u_j,u_k)-\sum_qh^F(\frac{\partial}{\partial\bar{z}_q}(L_ju_j),\nu_k^q u_k) \\
        &=-\sum_{p,\lambda,\mu}\frac{\partial \nu_j^p}{\partial\bar{t}_k}\frac{\partial u_{j\lambda}}{\partial z_p}
         \overline{u_{k\mu}}h_{\lambda\bar\mu}^F-\sum_{p,\lambda,\mu}\frac{\partial \nu_j^p}{\partial\bar{t}_k}u_{j\lambda}\overline{u_{k\mu}}\frac{\partial h_{\lambda\bar\mu}^F}{\partial z_p}  \\
        &\quad +\sum_{p,\lambda,\mu}\nu_j^pA_{pk\lambda\mu}u_{j\lambda}\overline{u_{k\mu}}-\sum_{p,\lambda,\mu}\frac{\partial^2\nu_{j}^p}{\partial z_p\partial\bar{t}_k}u_{j\lambda}\overline{u_{k\mu}}h_{\lambda\bar\mu}^F  \\
        &\quad -\sum_{p,q,\lambda,\mu}\left(\frac{\partial \nu_j^p}{\partial\bar{z}_q}\frac{\partial u_{j\lambda}}{\partial z_p}+\frac{\partial^2\nu_j^p}{\partial z_p\partial\bar{z}_q}u_{j\lambda}\right)
         \overline{\nu_k^q}\overline{u_{k\mu}}h_{\lambda\bar{\mu}}^F+\sum_{q,\lambda,\mu}\overline{\nu_k^q}A_{jq\lambda\mu}u_{j\lambda}\overline{u_{k\mu}}  \\
         &\quad -\sum_{p,q,\lambda,\mu} \overline{\nu_k^q}\frac{\partial\nu_j^p}{\partial\bar{z}_q}
        u_{j\lambda}\overline{u_{k\mu}}\frac{\partial h_{\lambda\bar{\mu}}^F}{\partial z_p}+\sum_p\nu_{j}^p\overline{\nu_k^q}u_{j\lambda}\overline{u_{k\mu}}A_{pq\lambda\mu} \\
        &=\sum_{\lambda,\mu}\left(\sum_p\nu_j^pA_{pk\lambda\mu}+\sum_q\overline{\nu_k^q}A_{jq\lambda\mu}+\sum_{p,q}\nu_j^p\overline{\nu_k^q}A_{pq\lambda\mu}\right)u_{j\lambda}\overline{u_{k\mu}}
        \\ 
        &\quad -\sum_{p,\lambda,\mu}\left(\frac{\partial \nu_j^p}{\partial\bar{t}_k}+\sum_q\frac{\partial \nu_j^p}{\partial\bar{z}_q}\overline{\nu_k^q}\right)\frac{\partial u_{j\lambda}}{\partial z_p}
         \overline{u_{k\mu}}h_{\lambda\bar\mu}^F  \\
         &\quad-\sum_{p,\lambda,\mu}\left(\frac{\partial \nu_j^p}{\partial\bar{t}_k}
         +\sum_q\overline{\nu_k^q}\frac{\partial\nu_j^p}{\partial\bar{z}_q}\right)u_{j\lambda}\overline{u_{k\mu}}\frac{\partial h_{\lambda\bar\mu}^F}{\partial z_p}  \\
         &\quad -\sum_{p,\lambda,\mu}\left(\frac{\partial^2\nu_{j}^p}{\partial z_p\partial\bar{t}_k}+\sum_q
        \frac{\partial^2\nu_j^p}{\partial z_p\partial\bar{z}_q}\overline{\nu_k^q}+\sum_q\frac{\partial\nu_j^p}{\partial\bar{z}_q}\frac{\partial\overline{\nu_k^q}}{\partial z_p}\right)u_{j\lambda}\overline{u_{k\mu}}h_{\lambda\bar\mu}^F  \\
        &\quad +\sum_{p,q,\lambda,\mu}\frac{\partial\nu_j^p}{\partial\bar{z}_q}\frac{\partial\overline{\nu_k^q}}{\partial z_p}u_{j\lambda}\overline{u_{k\mu}}h_{\lambda\bar\mu}^F \\
        &=\sum_{\lambda,\mu}\left(\sum_p\nu_j^pA_{pk\lambda\mu}+\sum_q\overline{\nu_k^q}A_{jq\lambda\mu}+\sum_{p,q}\nu_j^p\overline{\nu_k^q}A_{pq\lambda\mu}\right)u_{j\lambda}\overline{u_{k\mu}}
          \\
        &\quad -\sum_{\lambda,\mu}\frac{\partial}{\partial z_p}\left(\left(\frac{\partial \nu_j^p}{\partial\bar{t}_k}+\sum_q\overline{\nu_k^q}\frac{\partial\nu_j^p}{\partial\bar{z}_q}\right)u_{j\lambda}\overline{u_{k\mu}}h_{\lambda\bar\mu}^F\right)  \\
        &\quad +\sum_{p,q,\lambda,\mu}\frac{\partial\nu_j^p}{\partial\bar{z}_q}\frac{\partial\overline{\nu_k^q}}{\partial z_p}u_{j\lambda}\overline{u_{k\mu}}h_{\lambda\bar\mu}^F \\
        &=\sum_{\lambda,\mu}\left(\sum_p\nu_j^pA_{pk\lambda\mu}+\sum_q\overline{\nu_k^q}A_{jq\lambda\mu}+\sum_{p,q}\nu_j^p\overline{\nu_k^q}A_{pq\lambda\mu}\right)u_{j\lambda}\overline{u_{k\mu}}
          \\
        &\quad -\frac{\partial}{\partial z_p}\left(\left(\frac{\partial \nu_j^p}{\partial\bar{t}_k}+\sum_q\overline{\nu_k^q}\frac{\partial\nu_j^p}{\partial\bar{z}_q}\right)h^F(u_j,u_k)\right)+\sum_{p,q}\frac{\partial\nu_j^p}{\partial\bar{z}_q}\frac{\partial\overline{\nu_k^q}}{\partial z_p}h^F(u_j,u_k) . 
        \end{align*}
        The proof of {\bf Claim 1} is complete, and then we have completed the proof of Theorem \ref{curvature_formula}.
        \end{proof}
        We  also give an elementary lemma of linear algebra. Although this lemma is simple and well known, we contains its proof as it is important in the proof of Theorem 
           \ref{thm(extend):Bern direct image bd domain}. For this, we recall some definitions.\\
            \indent Let $(W,\langle-,-\rangle)$ (the inner product is complex linear with respect to the first entry) be a finite dimensional (complex) inner product space, and let $T\colon W\rw W$ be a linear map, then $T$ is positive definite (resp. positive semi-definite) if and only if $\langle Tx,x\rangle> 0$
            (resp. $\langle Tx,x\rangle\geq 0$) for any $x\in W\setminus\{0\}.$ \\
            \indent Let $T\colon W_1\rw W_2$ be a linear map between two finite dimensional inner product spaces, then the adjoint map $T^*$ of $T$ is defined by the following formula
            $$\langle Tx,y\rangle=\langle x,T^*y\rangle,\ \forall x\in W_1,\ y\in W_2.$$
			\begin{lem}\label{lem(extend):the positivity of curvature estimate}
				Let $(W,\langle-,-\rangle)$ be a finite dimensional (complex) inner product space, $U$ be a linear subspace of $W$, and let $V$ be the orthogonal complement of 
                 $U$ in $W$. Suppose we are given four linear maps 
				$$T_1\colon U\rw U,\ T_2\colon V\rw U,\ T_3\colon U\rw V,\ T_4\colon V\rw V$$ 
				such that 
               \begin{itemize}
				\item[(1)] $T_1$ is invertible;
				\item[(2)] The linear map $T:=\left(\begin{array}{cc}
					T_1 & T_2\\
					T_3 & T_4
				\end{array}\right)$ (i.e. $T(u+v)=T_1u+T_2v+T_3u+T_4v$ for any $u\in U,\ v\in V$) is  positive definite
                (resp. positive semi-definite). 
               \end{itemize}
				Then we know $T_4-T_3T_1^{-1}T_2\colon V\rw V$ is also  positive definite (resp. positive semi-definite).
			\end{lem}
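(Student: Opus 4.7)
The plan is to verify the classical Schur-complement identity by a direct calculation: for each $v\in V$ I will produce a vector $w\in W$ so that $Tw$ lies entirely in $V$ with $V$-component equal to $(T_4-T_3T_1^{-1}T_2)v$, then read off the desired positivity of the Schur complement from that of $T$.

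Concretely, given $v\in V$, I would set $u:=-T_1^{-1}T_2 v\in U$ (well defined by hypothesis (1)) and take $w:=u+v$. Decomposing $Tw$ according to the orthogonal splitting $W=U\oplus V$, the $U$-component is $T_1u+T_2v=-T_2v+T_2v=0$, while the $V$-component is $T_3u+T_4v=(T_4-T_3T_1^{-1}T_2)v$. In particular $Tw\in V$, so using $U\perp V$ I obtain
\[
\langle Tw,w\rangle=\langle Tw,u\rangle+\langle Tw,v\rangle=\langle(T_4-T_3T_1^{-1}T_2)v,v\rangle.
\]

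Now I invoke hypothesis (2). If $T$ is positive semi-definite, the left side is $\geq 0$ for every $v$, hence $T_4-T_3T_1^{-1}T_2$ is positive semi-definite. If $T$ is positive definite, the same identity gives $\langle(T_4-T_3T_1^{-1}T_2)v,v\rangle>0$ provided $w\neq 0$; but the $V$-component of $w$ equals $v$, so $w=0$ forces $v=0$, and strict positivity follows.

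I do not expect any genuine obstacle here: the whole argument is a one-line computation once the right test vector $w$ is chosen, and the only point demanding a word of care is the strictness step, which is handled by the observation that the projection of $w$ onto $V$ recovers $v$.
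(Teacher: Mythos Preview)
Your argument is correct and is essentially the same as the paper's: both choose the test vector $w=v-T_1^{-1}T_2v$ and compute $\langle Tw,w\rangle=\langle(T_4-T_3T_1^{-1}T_2)v,v\rangle$ using $Tw\in V$ and $U\perp V$. The only cosmetic difference is that the paper packages this choice as $w=Sv$ for the block-triangular map $S=\begin{pmatrix}I&-T_1^{-1}T_2\\0&I\end{pmatrix}$ and passes through $S^*$, whereas you write the test vector down directly.
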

			\begin{proof}
               We only need to consider the case that $T$ is positive definite.
				Let $S$ be the linear map defined by $\left(\begin{array}{cc}
					I & -T_1^{-1}T_2\\
					0 & I
				\end{array}\right),$ where $I$ is the identity map, then $S$ is invertible. We claim that 
				$$S^*=\left(\begin{array}{cc}
					I & 0\\
					-(T_1^{-1}T_2)^* & I
				\end{array}\right).$$ 
				In fact, for any $u_1,u_2\in U,v_1,v_2\in V$, we have (as $U\bot V$)
				\begin{align*}
                    &\quad \langle u_1+v_1,S^*(u_2+v_2)\rangle=\langle S(u_1+v_1),u_2+v_2\rangle \\
					&=\langle u_1-(T_1^{-1}T_2)v_1+v_1,u_2+v_2\rangle\\
                    &=\langle u_1+v_1,u_2+v_2\rangle-\langle T_1^{-1}T_2v_1,u_2+v_2\rangle \\
					&=\langle u_1+v_1,u_2+v_2\rangle-\langle T_1^{-1}T_2v_1,u_2\rangle\\
                    &=\langle u_1+v_1,u_2+v_2\rangle-\langle v_1,(T_1^{-1}T_2)^*u_2\rangle \\
					&=\langle u_1+v_1,u_2+v_2\rangle-\langle u_1+v_1,(T_1^{-1}T_2)^*u_2\rangle\\
                    &=\langle u_1+v_1,u_2-(T_1^{-1}T_2)^*u_2+v_2\rangle 
				\end{align*}
				Now for any $v\in V\setminus\{0\}$, we have
				\begin{align*}   
					0&<\langle TSv,Sv\rangle=\langle S^*TS v, v\rangle =\langle S^*T(-T_1^{-1}T_2v+v),v\rangle\\
                    &=\langle S^*(-T_1(T_1^{-1}T_2v)+T_2v-T_3(T_1^{-1}T_2v)+T_4v),v\rangle \\
					&=\langle S^*(T_4v-T_3T_1^{-1}T_2v),v\rangle=\langle (T_4-T_3T_1^{-1}T_2)v,Sv\rangle \\
					&=\langle (T_4-T_3T_1^{-1}T_2)v,v\rangle 
				\end{align*}
				 then we know $T_4-T_3T_1^{-1}T_2$ is positive definite.
			\end{proof}
           \begin{cor}\label{cor:positive definite}
            For any $z\in\overline\Omega$, $\left(\left(\rho_{j\bar{k}}-\sum_{p,s}\rho_{j\bar{s}}\rho^{s\bar{p}}\overline{\rho_{k\bar{p}}}\right)(z)\right)_{1\leq j,k\leq n}$ is a positive semi-definite matrix for any $z\in\overline\Omega$.
            If $\Omega$ is a strictly pseudoconvex family of bounded domains over $U$, then $\left(\left(\rho_{j\bar{k}}-\sum_{p,s}\rho_{j\bar{s}}\rho^{s\bar{p}}\overline{\rho_{k\bar{p}}}\right)(z)\right)_{1\leq j,k\leq n}$ is a positive 
            definite matrix for any $z\in\overline\Omega$.
            \end{cor}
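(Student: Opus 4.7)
The plan is to recognize the matrix
$$M_{j\bar k} := \rho_{j\bar{k}}-\sum_{p,s}\rho_{j\bar{s}}\rho^{s\bar{p}}\overline{\rho_{k\bar{p}}}$$
as the Schur complement of the $(z,\bar z)$-block inside the full complex Hessian of $\rho$ on $\mc^n_t\times\mc^m_z$, and then invoke Lemma \ref{lem(extend):the positivity of curvature estimate}. Using $\overline{\rho_{k\bar p}}=\rho_{p\bar k}$, the full Hessian decomposes as
$$H(\rho)=\begin{pmatrix}(\rho_{p\bar q}) & (\rho_{p\bar k}) \\ (\rho_{j\bar q}) & (\rho_{j\bar k})\end{pmatrix},$$
where the upper-left block is indexed by the $z$-variables and the lower-right block by the $t$-variables, and the Schur complement of the upper-left block is precisely $M_{j\bar k}$.

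The first step is to extract two facts from the hypothesis that $\rho$ is a plurisubharmonic defining function: (i) $H(\rho)$ is positive semi-definite in a neighborhood of $\overline\Omega$; and (ii) the block $(\rho_{p\bar q})_{1\leq p,q\leq m}$ is strictly positive definite on $\overline\Omega_t$ for every $t\in U$, since $\rho(t,\cdot)$ is required to be strictly plurisubharmonic on each closed fiber by Definition \ref{def:strict p.s.c family}(3). In particular $(\rho_{p\bar q})$ is invertible, so $(\rho^{s\bar p})$ exists and the Schur complement makes sense pointwise on $\overline\Omega$.

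The second step is to apply Lemma \ref{lem(extend):the positivity of curvature estimate} to the finite-dimensional inner product space $W=\mc^m\oplus\mc^n$ with $U=\mc^m$, $V=\mc^n$, taking $T_1=(\rho_{p\bar q})$, $T_2=(\rho_{p\bar k})$, $T_3=(\rho_{j\bar q})$, $T_4=(\rho_{j\bar k})$. Hypothesis (i) says $T$ is positive semi-definite, and hypothesis (ii) says $T_1$ is invertible, so the lemma yields that $T_4-T_3T_1^{-1}T_2=M$ is positive semi-definite, proving the first assertion. For the second assertion, if $\Omega$ is strictly pseudoconvex then the defining function $\rho$ can be chosen strictly plurisubharmonic on a neighborhood of $\overline\Omega$, so $H(\rho)$ is strictly positive definite there; the positive-definite version of the lemma then gives that $M$ is positive definite on $\overline\Omega$.

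The only real subtlety is to check that the positive semi-definite clause of Lemma \ref{lem(extend):the positivity of curvature estimate} is actually available, since the proof written out in the appendix addresses only the strictly positive case. I expect this to be the main (and only) obstacle, and it is easily resolved: the factorization identity $\langle T_4v-T_3T_1^{-1}T_2v,v\rangle=\langle TSv,Sv\rangle$ derived in that proof holds verbatim without any strict-positivity assumption on $T$, so the inequality passes from $\geq 0$ on $W$ to $\geq 0$ on $V$ just as it passes from $>0$ to $>0$. Alternatively, one can approximate $\rho$ by $\rho+\varepsilon|t|^2$ to obtain strict plurisubharmonicity of the ambient Hessian, apply the strict version of the lemma, and let $\varepsilon\to 0^+$ to conclude semi-definiteness of $M$.
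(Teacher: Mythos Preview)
Your proposal is correct and matches the paper's intended approach: the corollary is stated immediately after Lemma \ref{lem(extend):the positivity of curvature estimate} with no proof, precisely because it is the direct application of that Schur-complement lemma to the block decomposition of the full complex Hessian of $\rho$ that you describe. Your handling of the semi-definite case (either by noting the identity $\langle (T_4-T_3T_1^{-1}T_2)v,v\rangle=\langle TSv,Sv\rangle$ holds regardless, or by the $\rho+\varepsilon|t|^2$ approximation) is appropriate and fills in the only point the paper leaves implicit.
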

          The following Lemma is a direct generalization of Theorem A.5 of \cite{Wang17}.
          \begin{lem}\label{lem:curvature estimate}
          If $F$ is Nakano strictly positive on $\Omega$. Fix $t_0\in U$, and suppose there are an open neighborhood $V\subset U$ of $t_0$,
          $v_1\in C^1(V,F)$ and $v_2\in C^0(V,F)$ such that $v:=\sum_s dz_s\wedge D_{z_s}^Fv_1+v_2$ is $\bar{\partial}_z$-closed,
          then we may solve $\bar{\partial}_zu=v$ on $\Omega_t$ ($t\in V$) with an estimate 
          $$
          \int_{\Omega_t}h^F(u,u)d\lambda\leq \int_{\Omega_t}h^F(v_1,v_1)d\lambda+\int_{\Omega_t}h^F(A^{-1}v_2,v_2)d\lambda,
          $$
          provided the right hand side is finite.
          \end{lem}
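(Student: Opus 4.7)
The plan is to obtain this as a Hörmander-type $L^2$-estimate for the $\bar\partial_z$-equation on the fiber $\Omega_t$, which is strictly pseudoconvex by hypothesis (since $\rho(t,\cdot)$ is strictly plurisubharmonic on $\overline\Omega_t$). This is the direct vector-bundle generalization of \cite[Theorem A.5]{Wang17}; the strategy is to mimic Wang's line-bundle argument, substituting the scalar Chern curvature by the endomorphism-valued curvature operator $A$ of $(F,h^F)$ and using Nakano strict positivity wherever Wang used strict plurisubharmonicity of the weight.

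First, I would set up the Bochner-Kodaira-Nakano-Hörmander inequality: for $F$-valued test forms $\alpha$ on $\Omega_t$ in the domain of $\bar\partial_z^*$ satisfying the $\bar\partial$-Neumann boundary condition,
$$
\|\bar\partial_z\alpha\|^2 + \|\bar\partial_z^*\alpha\|^2 \;\geq\; \int_{\Omega_t} h^F(A\alpha,\alpha)\, d\lambda,
$$
where the Levi-form boundary contribution has been dropped because strict pseudoconvexity of $\Omega_t$ makes it nonnegative. Nakano strict positivity makes $A$ pointwise positive definite, so $A^{-1}$ is a well-defined bundle endomorphism, bounded on compact subsets.

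Second, for the $v_2$ piece I would run the standard Hörmander duality argument: by Cauchy-Schwarz with respect to the weight $A$, the antilinear functional $\bar\partial_z^*\alpha \mapsto (v_2,\alpha)$ is well-defined on the image of $\bar\partial_z^*$ and bounded in norm by $\bigl(\int h^F(A^{-1}v_2, v_2)\,d\lambda\bigr)^{1/2}$; Hahn-Banach followed by Riesz representation produces a solution $w_2$ of $\bar\partial_z w_2 = v_2$ with $\int h^F(w_2, w_2)\,d\lambda \leq \int h^F(A^{-1} v_2, v_2)\,d\lambda$. For the piece $v' := \sum_s dz_s \wedge D_{z_s}^F v_1$, the trick is to integrate by parts against a test form $\alpha$: moving each $D^F_{z_s}$ onto $\alpha$ produces an expression bounded in modulus by $\|v_1\|\cdot\|\bar\partial_z^*\alpha\|$ after the connection boundary contribution cancels via the $\bar\partial$-Neumann condition. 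Combined with the BKN inequality in the duality argument, this yields a solution $w_1$ of $\bar\partial_z w_1 = v'$ with $\int h^F(w_1,w_1)\,d\lambda \leq \int h^F(v_1,v_1)\,d\lambda$, \emph{without} the $A^{-1}$ factor. Setting $u := w_1 + w_2$ gives $\bar\partial_z u = v$ with the desired estimate.

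The main obstacle will be the careful bookkeeping of boundary integrals in the integration by parts that eliminates $A^{-1}$ from the $v_1$-contribution: one must verify that the Chern-connection boundary terms vanish under the chosen boundary condition on $\alpha$, and that the two bounds (on $v'$ and on $v_2$) can be merged in a single Hahn-Banach step by writing the target functional as a sum of two pairings and applying Cauchy-Schwarz to each. Apart from this boundary analysis, the argument is a routine transcription of the scalar proof in \cite[Theorem A.5]{Wang17}, with the matrix-valued $A$ replacing the scalar curvature throughout.
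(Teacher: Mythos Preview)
The paper does not supply its own proof of this lemma: it is simply introduced with the sentence ``The following Lemma is a direct generalization of Theorem~A.5 of \cite{Wang17}'' and then used without argument. So there is no in-paper proof to compare against; your outline---H\"ormander duality driven by the Bochner--Kodaira--Nakano inequality on the strictly pseudoconvex fiber---is exactly the route one takes to generalize Wang's line-bundle argument, and in that sense it matches what the paper has in mind.

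Two points in your sketch need tightening. First, the middle paragraph proposes solving $\bar\partial_z w_1=v'$ and $\bar\partial_z w_2=v_2$ separately; this fails because neither $v'$ nor $v_2$ is $\bar\partial_z$-closed on its own (only their sum $v$ is), so the functional $\bar\partial_z^*\alpha\mapsto(v_2,\alpha)$ is not well defined on $\operatorname{Ran}\bar\partial_z^*$. You note in the last paragraph that the two pieces must be merged into one Hahn--Banach step, which is correct, but the earlier description should be discarded. Second, ``applying Cauchy--Schwarz to each'' pairing and adding gives only $\|u\|\le\|v_1\|+\|A^{-1/2}v_2\|$, hence $\|u\|^2\le 2\bigl(\|v_1\|^2+\int h^F(A^{-1}v_2,v_2)\bigr)$, with an extra factor $2$. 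To obtain the stated additive bound you must write
\[
(v,\alpha)=(v_1,(\partial^F_z)^*\alpha)+(A^{-1/2}v_2,A^{1/2}\alpha)
\]
and apply a \emph{single} Cauchy--Schwarz in the direct sum, combined with the full Bochner--Kodaira identity
\[
\|\bar\partial_z^*\alpha\|^2\ \ge\ \|(\partial^F_z)^*\alpha\|^2+\int_{\Omega_t}h^F(A\alpha,\alpha)\,d\lambda
\]
for $\alpha\in\ker\bar\partial_z\cap\operatorname{Dom}\bar\partial_z^*$ of bidegree $(m,1)$ (so $\partial^F_z\alpha=0$ automatically) on the pseudoconvex fiber. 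The sharp constant is not cosmetic here: the proof of Lemma~\ref{lem:strictly positive vector bundle} relies on exact cancellations between $\int h^F(v_1,v_1)$, $\int h^F(A^{-1}v_2,v_2)$ and the remaining curvature terms, and a factor $2$ would destroy them.
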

          Now we give the proof of Lemma \ref{lem:strictly positive vector bundle}.
	      \begin{lem}[= Lemma \ref{lem:strictly positive vector bundle}]\label{lem:compute}
            If $F$ is  Nakano strictly positive on $\Omega$, then (locally)
            \begin{align*}
             &\quad\sum_{j,k,\lambda,\mu}\int_{\Omega_t}H(h^F)_{jk\lambda\mu}u_{j\lambda}\overline{u_{k\mu}}d\lambda-\sum_{j,k}\int_{\Omega_t}h^F(\pi_{\perp}(L_ju_j),\pi_{\perp}(L_ku_k))d\lambda
             \\
             &\quad+\sum_{j,k,p,q}\int_{\Omega_t}\frac{\partial\nu_j^p}{\partial\bar{z}_q}\frac{\partial\overline{\nu_k^q}}{\partial z_p}h^F(u_j,u_k)d\lambda \\
              &\geq  \int_{\Omega_t} \sum_{j,k,\beta,\gamma}(A_{jk\beta\gamma}-\sum_{p,q,\lambda,\mu}B_{pq\lambda\mu}A_{jq\beta\mu}A_{pk\lambda\gamma})u_{j\beta}\overline{u_{k\gamma}} d\lambda  ,
			\end{align*} 
				where $B$ is uniquely determined by $A$ and satisfies
				$$
                \sum_{q,\beta}A_{pq\lambda\beta}B_{sq\alpha\beta}=\delta_{ps}\delta_{\lambda\alpha}.
                 $$
            \end{lem}
            \begin{proof}
        Put 
       $$v:=\sum_j\pi_\perp\left(L_j u_j\right).$$
We will use Lemma \ref{lem:curvature estimate} to give an estimate of $\int_{\Omega_t}h^F(v,v)d\lambda$. Note that 
    	\begin{align*}
            &\quad\bar\partial_zv\\
            &=\sum_j\bar{\partial}_z(\pi_{\perp}(L_j u_j))=\sum_j\bar{\partial}_z(L_j u_j) \\
            &= \sum_{j,\lambda}\bar{\partial}_z(\frac{\partial u_{j\lambda}}{\partial t_j}dz\otimes e_\lambda+u_{j\lambda}dz\otimes D_{t_j}^Fe_\lambda) \\
            &\quad +\sum_{j,p,\lambda} \bar{\partial}_z\nu_j^p\wedge (\frac{\partial u_{j\lambda}}{\partial z_p}dz\otimes e_\lambda+u_{j\lambda}dz\otimes D_{z_p}^Fe_\lambda) \\
            &\quad +\sum_j \nu_j^p \bar{\partial}_z\left(\frac{\partial u_{j\lambda}}{\partial z_p}dz\otimes e_\lambda+u_{j\lambda}dz\otimes D_{z_p}^Fe_\lambda\right)+\sum_{j,p,\lambda}\bar{\partial}_z\left(\frac{\partial\nu_j^p}{\partial z_p}\right)\wedge u_j \\
            &=\sum_{j,\lambda}\bar{\partial}_z(u_{j\lambda}dz\otimes D_{t_j}^Fe_\lambda)+\sum_{j,p,\lambda} \bar{\partial}_z\nu_j^p\wedge \left(\frac{\partial u_{j\lambda}}{\partial z_p}dz\otimes e_\lambda+u_{j\lambda}dz\otimes D_{z_p}^Fe_\lambda\right) \\
            &\quad +\sum_j \nu_j^p \bar{\partial}_z(u_{j\lambda}dz\otimes D_{z_p}^Fe_\lambda)+\sum_{j,p,\lambda}\bar{\partial}_z\left(\frac{\partial\nu_j^p}{\partial z_p}\right)\wedge u_j \\
            &=(-1)^m\sum_{j,\lambda}u_{j\lambda}dz\wedge \bar{\partial}_z(D_{t_j}^Fe_\lambda) \\
            &\quad +(-1)^m\sum_{j,p,q,\lambda} \frac{\partial\nu_j^p}{\partial\bar{z}_q}\left(\frac{\partial u_{j\lambda}}{\partial z_p}dz\wedge d\bar{z}_q\otimes e_\lambda+u_{j\lambda}dz\wedge d\bar{z}_q\otimes D_{z_p}^Fe_\lambda\right) \\
            &\quad +(-1)^m\sum_j \nu_j^p u_{j\lambda}dz\wedge \bar{\partial}_z(D_{z_p}^Fe_\lambda)+(-1)^m\sum_{j,p,q,\lambda}\frac{\partial^2\nu_j^p}{\partial z_p\partial\bar{z}_q}u_{j\lambda}
            dz\otimes d\bar{z}_q\otimes e_\lambda \\
            &=(-1)^m\sum_{j,q,\lambda,\mu}\left(u_{j\lambda}\frac{\partial\Gamma_{j\lambda}^\mu}{\partial\bar{z}_q}+\sum_p\frac{\partial\nu_j^p}{\partial\bar{z}_q}u_{j\lambda}\Gamma_{p\lambda}^\mu+
            \sum_p\nu_j^pu_{j\lambda}\frac{\partial\Gamma_{p\lambda}^\mu}{\partial\bar{z}_q}\right)dz\wedge d\bar{z}_q\otimes e_\mu \\
            &\quad +(-1)^m\sum_{j,p,q,\mu}\left(\frac{\partial\nu_j^p}{\partial\bar{z}_q}\frac{\partial u_{j\mu}}{\partial z_p}+\frac{\partial^2\nu_j^p}{\partial z_p\partial\bar{z}_q}u_{j\mu}
            \right)dz\wedge d\bar{z}_q\otimes e_\mu \\
            &=(-1)^{m+1}\sum_{j,p,\lambda,\alpha}u_{j\alpha}\left(A_{jp\alpha}^\lambda-\sum_s\frac{\partial\nu_j^s}{\partial\bar{z}_p}\Gamma_{s\alpha}^\lambda+
            \sum_s\nu_j^sA_{sp\alpha}^\lambda\right)dz\wedge d\bar{z}_p\otimes e_\lambda \\
            &\quad +(-1)^m\sum_{j,p,s,\lambda}\left(\frac{\partial\nu_j^s}{\partial\bar{z}_p}\frac{\partial u_{j\lambda}}{\partial z_s}+\frac{\partial^2\nu_j^s}{\partial z_s\partial\bar{z}_p}u_{j\lambda}
            \right)dz\wedge d\bar{z}_p\otimes e_\lambda \\
            &=\sum_s dz_s\wedge D_{z_s}^Fv_1+ v_2, 
    	\end{align*}
        where
        $$
         v_1:=\sum_{j,p,q,\lambda} (-1)^{m+p-1}\frac{\partial \nu_j^p}{\partial\bar{z}_q} u_{j\lambda}dz_1\wedge\cdots\wedge\widehat{dz_p}\wedge\cdots\wedge dz_m\wedge d\bar{z}_q\otimes e_\lambda,
        $$
        $$
        v_2:=(-1)^{m+1}\sum_{j,p,\lambda,\alpha}u_{j\alpha}(A_{jp\alpha}^\lambda+
            \sum_s\nu_j^sA_{sp\alpha}^\lambda)dz\wedge d\bar{z}_p\otimes e_\lambda.
        $$
         By Lemma \ref{lem:curvature estimate}, we know 
        $$
          \int_{\Omega_t}h^F(v,v)d\lambda\leq \int_{\Omega_t}h^F(v_1,v_1)d\lambda+\int_{\Omega_t}h^F(A^{-1}v_2,v_2)d\lambda.
          $$
       To prove Lemma \ref{lem:compute},  we only need to verify 
       \begin{align*}
       &\quad h^F( A^{-1}v_2,v_2)\\
        &=\sum_{j,k,p,q,\lambda,\mu,\alpha,\beta}B_{pq\lambda\mu} A_{jp\alpha\mu}A_{pk\lambda\beta}u_{j\alpha}\overline{u_{k\beta}}+\sum_{j,k,p,\alpha,\beta}\overline{\nu_k^p}A_{jp\alpha\beta}u_{j\alpha}\overline{u_{k\beta}} \\
        &\quad +\sum_{j,k,p,\alpha,\beta}\nu_j^p(A_{pk\alpha\beta}+\sum_a\overline{\nu_k^a}A_{pa\alpha\beta})u_{j\alpha}\overline{u_{k\beta}} 
        \end{align*}
        for some $B$ to be determined later.\\
        \indent For any $g:=\sum_{p,\lambda}g_{p\lambda}dz\wedge d\bar{z}_p\otimes e_\lambda,$ we have 
    	\begin{align*}
    	Ag&=i\Theta^{(F,h^F)}(\Lambda_\omega g) \\
    		&=(i\sum_{p,q,\lambda,\mu}A_{pq\lambda}^\mu dz_p\wedge d\bar{z}_q\otimes e_\lambda^*\otimes e_\mu)(\Lambda_\omega g) \\
    		&=\sum_{j,p,q,\lambda,\mu}A_{pq\lambda}^\mu g_{p\lambda}dz\wedge d\bar{z}_q\otimes e_\mu. 
    	\end{align*}
    	Note that we may view $A$ as a linear map $T$ which satisfies 
    	$$
       T(d\bar{z}_p\otimes e_\lambda)= \sum_{q,\mu}A_{pq\lambda}^\mu d\bar{z}_q\otimes e_\mu.
       $$
    	Since $(F,h^F)$ is Nakano strictly positive, then  for any nonzero matrices $(u_{p\lambda})$, we have 
    	$$
       \sum_{p,q,\lambda,\mu}A_{pq\lambda\mu}u_{p\lambda}\overline{u_{q\mu}}>0,
       $$
    	and then $T$ is invertible. Let $T^{-1}$ be the inverse map of $T$, and write 
    	$$
        T^{-1}(d\bar{z}_q\otimes e_\mu)=\sum_{p,\lambda}B_{pq\lambda}^\mu d\bar{z}_p\otimes e_\lambda,
        $$
    	then we have 
    	$$
         d\bar{z}_p\otimes e_\lambda=T^{-1}(\sum_{q,\mu}A_{pq\lambda}^\mu d\bar{z}_q\otimes e_\mu)=\sum_{q,s,\alpha,\mu}A_{pq\lambda}^\mu B_{sq\alpha}^\mu d\bar{z}_s\otimes e_\alpha,
         $$
    	i.e.
        $$
         \sum_{q,\mu}A_{pq\lambda}^\mu B_{sq\alpha}^\mu=\delta_{ps}\delta_{\lambda\alpha}.
         $$
    	Let 
    	$$
         B_{sq\alpha\beta}:=\sum_{\mu} B_{sq\alpha}^\mu (h^F)^{\beta\mu},
         $$
        then we have 
    	$$
        \sum_{q,\beta}A_{pq\lambda\beta}B_{sq\alpha\beta}=\sum_{q,\mu}A_{pq\lambda}^\mu B_{sq\alpha}^\mu=\delta_{ps}\delta_{\lambda\alpha}.
        $$
    	Define linear maps (for the simplicity of notations, we omit $dz$ here)
    	$$
        S_1(d\bar{z}_p\otimes e_\lambda)=\sum_{q,\mu}A_{pq\lambda\mu}d\bar{z}_q\otimes e_\mu,\ S_2(d\bar{t}_j\otimes e_\lambda)=\sum_{q,\mu}A_{jq\lambda\mu}d\bar{z}_q\otimes e_\mu,
        $$
    	$$
       S_3(d\bar{z}_p\otimes e_\lambda)=\sum_{k,\mu}A_{pk\lambda\mu}d\bar{t}_k\otimes e_\mu,\ S_4(d\bar{t}_j\otimes e_\lambda)=\sum_{k,\mu}A_{jk\lambda\mu}d\bar{t}_k\otimes e_\mu,
       $$
    	$$
      B(d\bar{z}_q\otimes e_\mu)=\sum_{p,\lambda}B_{pq\lambda\mu}d\bar{z}_p\otimes e_\lambda,
      $$
    	then we have 
    	$$
       B S_1(d\bar{z}_p\otimes e_\lambda)=B(\sum_{q,\mu}A_{pq\lambda\mu}d\bar{z}_q\otimes e_\mu)=
    	\sum_{q,s,\mu,\alpha}A_{pq\lambda\mu}B_{sq\alpha\mu}d\bar{z}_s\otimes e_\alpha=d\bar{z}_p\otimes e_\lambda,
        $$
    	so $B=S_1^{-1}$. \\
    	\indent Let 
         $$
         (v_2)_{p\lambda}:=(-1)^{m+1}\sum_{j,\alpha}u_{j\alpha}(A_{jp\alpha}^\lambda+
            \sum_s\nu_j^sA_{sp\alpha}^\lambda),
         $$
         then we know  
    	$$
        A^{-1}v_2=\sum_{p,q,\lambda,\mu} B_{pq\lambda}^\mu v_{q\mu}dz\wedge d\bar{z}_p\otimes e_\lambda,
       $$
    	and 
    	\begin{align*}
        &\quad h^F( A^{-1}v_2,v_2)\\
        &=\sum_{j,k,p,q,\lambda,\mu,\alpha,\beta,\gamma}B_{pq\lambda}^\mu (A_{jp\alpha}^\mu+\sum_s \nu_j^s A_{sq\alpha}^\mu)\overline{(A_{kp\beta}^\gamma+\sum_a\nu_k^a
         A_{ap\beta}^\gamma)}u_{j\alpha}\overline{u_{k\beta}}h_{\lambda\gamma}^F \\
        &=\sum_{j,k,p,q,\lambda,\mu,\alpha,\beta}B_{pq\lambda}^\mu (A_{jp\alpha}^\mu+\sum_s \nu_j^s A_{sq\alpha}^\mu)(A_{pk\lambda\beta}+\sum_a\overline{\nu_k^a}A_{pa\lambda\beta})u_{j\alpha}\overline{u_{k\beta}} \\
        &=\sum_{j,k,p,\lambda,\alpha,\beta}(\sum_{q,\mu}B_{pq\lambda}^\mu A_{jp\alpha}^\mu+ \nu_j^p\delta_{\alpha\lambda})(A_{pk\lambda\beta}+\sum_a\overline{\nu_k^a}A_{pa\lambda\beta})u_{j\alpha}\overline{u_{k\beta}} \\
        &=\sum_{j,k,p,q,\lambda,\mu,\alpha,\beta}B_{pq\lambda}^\mu A_{jp\alpha}^\mu(A_{pk\lambda\beta}+\sum_a\overline{\nu_k^a}A_{pa\lambda\beta})u_{j\alpha}\overline{u_{k\beta}} \\
        &\quad +\sum_{j,k,p,\alpha,\beta}\nu_j^p(A_{pk\alpha\beta}+\sum_a\overline{\nu_k^a}A_{pa\alpha\beta})u_{j\alpha}\overline{u_{k\beta}} \\
        &=\sum_{j,k,p,q,\lambda,\mu,\alpha,\beta}B_{pq\lambda\mu} A_{jp\alpha\mu}(A_{pk\lambda\beta}+\sum_a\overline{\nu_k^a}A_{pa\lambda\beta})u_{j\alpha}\overline{u_{k\beta}} \\
        &\quad +\sum_{j,k,p,\alpha,\beta}\nu_j^p(A_{pk\alpha\beta}+\sum_a\overline{\nu_k^a}A_{pa\alpha\beta})u_{j\alpha}\overline{u_{k\beta}} \\
        &=\sum_{j,k,p,q,\lambda,\mu,\alpha,\beta}B_{pq\lambda\mu} A_{jp\alpha\mu}A_{pk\lambda\beta}u_{j\alpha}\overline{u_{k\beta}}+\sum_{j,k,p,\alpha,\beta}\overline{\nu_k^p}A_{jp\alpha\beta}u_{j\alpha}\overline{u_{k\beta}} \\
        &\quad +\sum_{j,k,p,\alpha,\beta}\nu_j^p(A_{pk\alpha\beta}+\sum_a\overline{\nu_k^a}A_{pa\alpha\beta})u_{j\alpha}\overline{u_{k\beta}} .
        \end{align*}
        Thus the proof is completed.
    	\end{proof}
		\end{document}